\newtheorem{theorem}{Theorem}[section]
\newtheorem{lemma}[theorem]{Lemma}
\newtheorem{proposition}[theorem]{Proposition}
\newtheorem{corollary}[theorem]{Corollary}
\theoremstyle{definition}
\newtheorem{definition}[theorem]{Definition}
\theoremstyle{remark}
\newtheorem{remark}[theorem]{Remark}
\newcommand{\ba}{\begin{array}}
\newcommand{\ea}{\end{array}}
\begin{document}
\date{}
\title{ \bf\large{A Generalized Mountain Pass Lemma  with
a Closed Subset for Locally Lipschitz Functionals\\
 \begin{small} In memory of Professor Shi Shuzhong for his 80th birthday \end{small}}}
\author{Fengying Li\textsuperscript{1}\footnote{Corresponding Author, Email: lify0308@163.com}\ \
Bingyu Li\textsuperscript{2}\ \ Shiqing Zhang\textsuperscript{3}
 \\
{\small \textsuperscript{1} School of Economic and Mathematics,
Southwestern University of Finance and \hfill{\ }}\\
\ \ {\small Economics, Chengdu, Sichuan, 611130, P.R.China.\hfill{\ }}\\
{\small \textsuperscript{2} College of Information Science and Technology,
Chengdu University of Technology,\hfill{\ }}\\
\ \ {\small Chengdu, Sichuan, 610059, P.R.China.\hfill {\ }}\\
{\small \textsuperscript{3} Department of Mathematics,
 Sichuan University,\hfill{\ }}\\
\ \ {\small Chengdu, Sichuan, 610064, P.R.China.\hfill {\ }}}

\maketitle

\begin{abstract}
The classical Mountain Pass Lemma of Ambrosetti-Rabinowitz
has been studied,
extended and modified in several directions.
Notable examples would certainly
include the generalization to locally Lipschitz functionals by K.C. Chang,
analyzing the structure of the critical set in the mountain pass theorem
in the works of Hofer, Pucci-Serrin and Tian, and the extension
by Ghoussoub-Preiss
to closed subsets in a Banach space with recent variations. In this paper,
we utilize the generalized gradient of Clarke and Ekeland's
variatonal principle to generalize the
Ghoussoub-Preiss's Theorem in the setting of locally Lipschitz functionals.
We give an application to periodic solutions of Hamiltonian systems.

\noindent{{\bf Keywords}}: Mountain Pass Lemma of Ambrosetti-Rabinowitz,
Ekeland's variational principle,
locally Lipschitz functionals, Clarke's  generalized gradient,
generalized Mountain Pass Lemma.

\noindent{{\bf 2000 Mathematical Subject Classification}}: 34A34, 34C25, 35A15.
\end{abstract}

\section{Introduction and Main Results}
\renewcommand{\theequation}{\arabic{section}\arabic{equation}}

Saddle points in the Mountain pass Lemma ( \cite{1} - \cite{25}) are different from
maximum points and minimum points. Maximum and Minimum problems in infinite dimensional
space have a very long and prominent history ( \cite{22}) with "isoperimetric problems"
and the "problem of the brachistochrone" as two notable examples. In the 19th century
"Dirichlet principle" we essentially encountered the problem of minimizing a functional;
however, complete rigor was mostly lacking and we had to wait for Hilbert for satisfactory
completion of the Dirichlet principle. Continuing in the 20th century, Italian mathematician
Tonelli introduced the concept of a weakly lower semi-continuous (w.l.s.c) functional and
proved that a w.l.s.c functional defined on a weakly closed subset of a reflexive Banach
space can attain its infimum if it is coercive ( \cite{22}).
At times, the existence of a saddle point, which is neither a maximum nor minimum point,
is of considerable importance. Minimax methods in the finite dimensional case ( \cite{22}, \cite{25})
can be traced back to Birkhoff in 1917 and von Neumann's minimax theorem in 1928. We can also observe
that the Mountain Pass Lemma of Ambrosseti-Rabinowitz ( \cite{1}) in 1973 is a type of minimax theorem,
which can be traced back to Courant in 1950 for the finite dimensional case ( \cite{22}).
The Palais-Smale condition first appeared in connection with infinite dimensional problems,
but it is necessary even in finite dimensional situations. Reference ( \cite{4}) gives an example
of a polynomial in two variables that has exactly two non-degenerate critical points, both of
which are global minimizers, and points out that the given polynomial does not satisfy the
Palais-Smale condition. From the finite dimensional case to the infinite dimensional case,
the key step in the proof of the Mountain Pass Lemma is the use of a Palais-Smale type
compactness condition($PS$) to drive Palais's Deformation Lemma. We should note the original
proof of the Ambrosseti-Rabinowitz's Mountain Pass Lemma used Palais's Deformation Lemma ( \cite{22}).
In the 1970's, Ekeland discovered a very important principle for lower semi-continuous functions
on a complete metric space. Until the middle of the 1980's, Aubin-Ekeland ( \cite{3}), Shi ( \cite{21})
discovered the relationship between the Mountain Pass Lemma of Ambrosseti-Rabinowitz and Ekeland's
variational priciple. The Mountain Pass Lemma of Ambrosseti-Rabinowitiz has been intensively
studied and has found numerous applications ( \cite{1} - \cite{25}). Of special note,
it was generalized to the case of locally Lipschitz functionals by K.C. Chang ( \cite{5})
where he also obtained more minimax theorems by using a deformation lemma.

In this paper, we use Ekeland's variational principle to prove a generalized
Mountain Pass Lemma for locally Lipschitz functionals related with a closed subset,
and we also found an applications to Hamiltonian systems with local Lispschtiz potential and a fixed energy.

In 1973, Ambrosetti and Rabinowitz \cite{1} published the famous Mountain-Pass Theorem:
\begin{theorem}\label{thm 1}
{\rm (\cite{1})} Let $f$ be a $C^1-$real functional defined on a Banach space $X$ satisfying the following $(PS)$ condition:

Every sequence $\{x_n\}\subset X$ such that $\{f(x_n)\}$ is bounded and
 $\|f'(x_n)\|\rightarrow 0$ in $ X^*$ has a strongly convergent subsequence.

 Suppose there is an open neighborhood $\Omega$ of $x_0$ and a point $x_1\notin \bar{\Omega}$ such that
$
f(x_0),f(x_1)<c_0\leq\inf_{\partial\Omega}f,
$
and let
$$
\Gamma :=\{g\in C([0,1];X):g(0)=x_0, g(1)=x_1\}.
$$
Then
$
c :=\inf_{g\in\Gamma}\max_{t\in [0,1]}f(g(t))\geq c_0
$
 is a critical value of $f$: that is,
 there is $\bar{x}\in X$ such that $f(\bar{x})=c$ and $f'(\bar{x})=0$,
 where $f'(\bar{x})$ denotes the Fr\'{e}chet derivative of f at $\bar{x}$.
\end{theorem}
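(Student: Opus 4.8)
The plan is to prove Theorem~\ref{thm 1} in the spirit of the present paper, i.e.\ via Ekeland's variational principle on the space of paths, combined with the $(PS)$ condition. First I would turn $\Gamma$ into a complete metric space by equipping it with the uniform metric $d(g,h)=\max_{t\in[0,1]}\|g(t)-h(t)\|$ (completeness holds because $X$ is complete and the endpoint constraints $g(0)=x_0$, $g(1)=x_1$ are closed). On $(\Gamma,d)$ consider $\varphi(g):=\max_{t\in[0,1]}f(g(t))$, which is continuous, hence lower semicontinuous. It is bounded below: any $g\in\Gamma$ joins $x_0\in\Omega$ to $x_1\notin\bar\Omega$, so $g$ meets $\partial\Omega$, whence $\varphi(g)\ge\inf_{\partial\Omega}f\ge c_0$. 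Thus $c=\inf_{g\in\Gamma}\varphi(g)$ is well defined and $c\ge c_0$.

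Next, fix a small $\ep>0$. Ekeland's variational principle yields $g_\ep\in\Gamma$ with $\varphi(g_\ep)\le c+\ep$ and $\varphi(h)\ge\varphi(g_\ep)-\sqrt{\ep}\,d(g_\ep,h)$ for every $h\in\Gamma$. The decisive step is to read off from this quasi-minimizer an almost-critical point: I claim there is $t_\ep$ with $f(g_\ep(t_\ep))=\varphi(g_\ep)$ and $\|f'(g_\ep(t_\ep))\|\le C\sqrt{\ep}$. To see this, let $M_\ep=\{t\in[0,1]:f(g_\ep(t))=\varphi(g_\ep)\}$ be the peak set; since $\varphi(g_\ep)\ge c_0>\max\{f(x_0),f(x_1)\}$ for $\ep$ small, $M_\ep$ is a nonempty compact subset of the open interval $(0,1)$, so any deformation of $g_\ep$ supported near $M_\ep$ keeps the endpoints fixed. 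If $\|f'\|$ were $\ge\delta>0$ on $g_\ep(M_\ep)$, I would take a locally Lipschitz pseudo-gradient vector field $v$ for $f$ near $g_\ep(M_\ep)$ (with $f'(x)[v(x)]$ bounded below by a constant multiple of $\|f'(x)\|^{2}$ and $\|v(x)\|$ by a constant multiple of $\|f'(x)\|$), cut it off so that it vanishes away from a small neighborhood of $M_\ep$, and flow $g_\ep$ for a short time $s>0$ to obtain $h_s\in\Gamma$ with $\varphi(h_s)\le\varphi(g_\ep)-c_1 s$ while $d(g_\ep,h_s)\le s$; this violates the Ekeland inequality as soon as $\sqrt{\ep}<c_1$. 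Hence such $t_\ep$ exists and $x_\ep:=g_\ep(t_\ep)$ satisfies $c\le f(x_\ep)\le c+\ep$ and $\|f'(x_\ep)\|\le C\sqrt{\ep}$.

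Finally I would take $\ep=1/n\to0$: the sequence $x_n:=x_{1/n}$ has bounded $f$-values and $\|f'(x_n)\|\to0$, so by $(PS)$ a subsequence converges to some $\bar x\in X$, and continuity of $f$ and $f'$ gives $f(\bar x)=c\ (\ge c_0)$ and $f'(\bar x)=0$, so $c$ is a critical value. The main obstacle is the extraction step in the second paragraph: because the maximum defining $\varphi$ is attained on the whole set $M_\ep$ rather than at a single time, the competitor path must be built so that $f$ decreases \emph{simultaneously} along all of $g_\ep(M_\ep)$, which requires a careful pseudo-gradient and cut-off construction (equivalently, a quantitative deformation lemma); this is precisely the ingredient that will have to be replaced by Clarke's generalized gradient in the locally Lipschitz setting treated later in this paper. (An alternative, purely deformation-theoretic proof avoids Ekeland altogether: assuming $c$ is not critical, $(PS)$ furnishes a deformation pushing $\{f\le c+\ep\}$ into $\{f\le c-\ep\}$ and fixing $\{f\le c-\ep\}\supseteq\{x_0,x_1\}$, and deforming a near-optimal path then contradicts the definition of $c$.)
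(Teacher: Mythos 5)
First, a point of comparison: the paper does not actually prove Theorem~\ref{thm 1} --- it is quoted from Ambrosetti--Rabinowitz \cite{1} --- so the relevant benchmark is the paper's proof of Theorems~\ref{thm 5}--\ref{thm 8}, of which Theorem~\ref{thm 1} is the smooth prototype. Your strategy (Ekeland's variational principle applied to $\varphi(g)=\max_{t}f(g(t))$ on the complete path space, followed by an analysis of the peak set $M_{\varepsilon}$) is exactly the strategy of that proof, and your argument is correct in outline. The differences are instructive: you use the uniform metric on $\Gamma$, while the paper uses the geodesic metric $\rho$ built from $\delta$ in (\ref{eq2}) --- that refinement is needed only to produce the Cerami weight $(1+\|x_n\|)$ and is superfluous here; you need no penalty term $\Psi$ nor a separating set $F$, which the paper introduces only to force the almost-critical points toward $F$; and at the decisive step you invoke a pseudo-gradient field on $X$ (legitimate since $f\in C^1$), whereas the paper, lacking pseudo-gradients in the locally Lipschitz setting, selects a descent direction at each point of the compact peak set $M$, patches them with a partition of unity in the $t$-variable, and extends by Tietze to a competitor path vanishing at the endpoints.

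One step is asserted more strongly than it can be proved as written: with a cut-off $\chi$ supported near $M_{\varepsilon}$ fixed in advance, the uniform decrease $\varphi(h_s)\le\varphi(g_{\varepsilon})-c_1 s$ for all small $s$ is false in general, because the maximum of $f\circ h_s$ may migrate to times $t$ just outside $M_{\varepsilon}$ where $\chi(t)$ is small but $f(g_{\varepsilon}(t))$ lies only $o(s)$ below $\varphi(g_{\varepsilon})$. You have flagged this as the main obstacle, and it is repairable in either of two standard ways: tie the cut-off to the sublevel sets of $f\circ g_{\varepsilon}$ rather than to a neighborhood of $M_{\varepsilon}$, or (as the paper does in the proof of Theorem~\ref{thm 5}, see (\ref{eq30})--(\ref{eq36})) forgo the uniform decrease altogether, take maximizers $t_{h_n}$ of the perturbed paths, extract $t_{h_n}\to\tau\in M$, and let the contradiction with Ekeland's inequality emerge in the limit $h_n\to 0^{+}$. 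With that repair the deformation step closes; the final $(PS)$ extraction and the passage to the critical point $\bar{x}$ are routine and correct.
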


Let $C^{1-0}(X;\mathds{R})$ be the space of locally Lipschitz mappings from
$X$ to $\mathds{R}$. For $\Phi\in C^{1-0}(X;\mathds{R})$ set (Clarke\cite{6})
$$
\partial\Phi(x):=\{x^*\in X^*:<x^*,v>\leq\Phi^0(x,v),\forall v\in X\},
$$
where
$\Phi^0(x,v):=\limsup\limits_{\mathop{{w\rightarrow x} }\limits_{t\downarrow 0}}\frac{\Phi(w+tv)-\Phi(w)}{t} $
denotes the generalized directional derivative of $\Phi$ at the point $x$ along the direction $v$.
We should note that if $\Phi\in C^1$, then $\Phi^0(x,v)$ reduces to the G\^{a}teaux directional
derivative and $\partial\Phi$ reduces to the classical derivative.

\begin{proposition}
$\Phi^0(x,v)$ have some useful properties ( \cite{6}):
\begin{enumerate}
\item[(1)] The function $v\rightarrow \Phi^0(x,v)$ is subadditive and positively homogeneous, and then is convex,
\item[(2)] $|\Phi^0(x,v)|\leq K\|v\|$,
\item[(3)] The function $v \rightarrow \Phi^0(x,v)$ is continuous,
\item[(4)] $\Phi^0(x,-v)=(-\Phi)^0(x,v)$.
\end{enumerate}
\end{proposition}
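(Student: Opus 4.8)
The plan is to obtain all four assertions directly from the definition of $\Phi^0(x,v)$ together with the local Lipschitz hypothesis. Fix a neighborhood $U$ of $x$ and a constant $K>0$ with $|\Phi(y)-\Phi(z)|\le K\|y-z\|$ for all $y,z\in U$; in every difference quotient that appears below the base point ranges near $x$ and the step size is small, so all arguments of $\Phi$ lie in $U$ and the Lipschitz estimate applies. I would begin with (2), which also guarantees that the $\limsup$ in the definition is a finite real number: for $w$ near $x$ and small $t>0$ one has $|\Phi(w+tv)-\Phi(w)|\le Kt\|v\|$, hence $|t^{-1}(\Phi(w+tv)-\Phi(w))|\le K\|v\|$, and passing to the $\limsup$ gives $|\Phi^0(x,v)|\le K\|v\|$.

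For (1), positive homogeneity when $\lambda>0$ follows from the substitution $s=\lambda t$, which rewrites $t^{-1}(\Phi(w+t\lambda v)-\Phi(w))$ as $\lambda s^{-1}(\Phi(w+sv)-\Phi(w))$ and identifies the two $\limsup$'s (the case $\lambda=0$ being trivial). For subadditivity I would use the splitting
\[
\frac{\Phi(w+t(v+v'))-\Phi(w)}{t}=\frac{\Phi((w+tv')+tv)-\Phi(w+tv')}{t}+\frac{\Phi(w+tv')-\Phi(w)}{t},
\]
note that $w+tv'\to x$ as $w\to x$, $t\downarrow0$, and bound the $\limsup$ of the sum by the sum of the $\limsup$'s to get $\Phi^0(x,v+v')\le\Phi^0(x,v)+\Phi^0(x,v')$. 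Convexity is then immediate: for $\alpha\in[0,1]$, combining subadditivity with positive homogeneity yields $\Phi^0(x,\alpha v+(1-\alpha)v')\le\alpha\Phi^0(x,v)+(1-\alpha)\Phi^0(x,v')$.

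Property (3) I would deduce from (1) and (2): writing $v=(v-v')+v'$ and applying subadditivity, $\Phi^0(x,v)-\Phi^0(x,v')\le\Phi^0(x,v-v')\le K\|v-v'\|$, and the same with $v$ and $v'$ interchanged gives $|\Phi^0(x,v)-\Phi^0(x,v')|\le K\|v-v'\|$, so $v\mapsto\Phi^0(x,v)$ is Lipschitz with constant $K$, in particular continuous. Finally, for (4), in $\Phi^0(x,-v)=\limsup_{w\to x,\ t\downarrow0}t^{-1}(\Phi(w-tv)-\Phi(w))$ I would substitute $u=w-tv$; then $u\to x$ as $w\to x$, $t\downarrow0$, the quotient becomes $t^{-1}\big((-\Phi)(u+tv)-(-\Phi)(u)\big)$, and its $\limsup$ is by definition $(-\Phi)^0(x,v)$.

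The arguments are routine; the only point that needs a little care is the repeated use of a change of variable (the scaling $s=\lambda t$, the translation $w\mapsto w+tv'$, the shift $u=w-tv$) to identify — or compare — the sets of pairs near $(x,0^{+})$ over which the two $\limsup$'s are evaluated. This proposition merely records the standard properties of Clarke's generalized directional derivative from \cite{6} for use in the sequel, so no serious obstacle is expected.
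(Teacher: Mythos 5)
Your proposal is correct: the paper states this proposition without proof, simply citing Clarke \cite{6}, and your argument is precisely the standard one from that reference (Lipschitz bound on the difference quotients for (2), the change of base point $w\mapsto w+tv'$ for subadditivity, the scaling $s=\lambda t$ for positive homogeneity, Lipschitz continuity of $v\mapsto\Phi^0(x,v)$ from (1) and (2), and the substitution $u=w-tv$ for (4)). Nothing further is needed.
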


K.C. Chang \cite{5} generalized the classical ($PS$) condition and the Mountain Pass Theorem
to local Lipschitz functions. Ribarska-Tsachev- Krastanov \cite{19} gave a generalization of
a result of Chang for the case when "the separating mountain range has zero altitude" which is
a version of the general mountain pass principle of Ghoussoub-Preiss for locally Lipschitz functions.

The generalization of the Mountain Pass Lemma of Ghoussoub-Preiss \cite{9} involves
the modification of the classical Palais-Smale condition:
\begin{definition}\label{def1}
Let $X$ be a Banach space, $F$ a closed subset of $X$ and $\varphi$ a G\^{a}teaux-differentiable
functional on $X$. The $(PS)_{F,c}$ condition is the following: if $\{x_n\}\subset X$ is a
sequence satisfying the three conditions
\begin{enumerate}
\item[(i)] $d(x_n,F)\rightarrow 0$,
where $d(x,F):=\inf\limits_{y\in F}||x-y||$ denotes the distance between the point $x$ and the set $F$,
\item[(ii)] $\varphi(x_n)\rightarrow c$,
\item[(iii)] $\varphi'(x_n)\rightarrow 0$,
\end{enumerate}
then $\{x_n\}$ has a strongly convergent subsequence.
\end{definition}

\begin{definition}\label{def2}
Let $X$ be a Banach space, and $F\subset X$ a closed subset. We say $\Phi\in C^{1-0}(X;\mathds{R})$
meets the $(CPS)_{F,c}$ condition when the following is true: if $\{x_n\}\subset X$ satisfies
\begin{enumerate}
\item[(1)] $d(x_n,F)\rightarrow0$,
\item[(2)] $\Phi(x_n)\rightarrow c$,
\item[(3)] $(1+\|x_n\|)\cdot\min\limits_{y^*\in \partial\Phi(x_n)}\|y^*\|\rightarrow 0$,
\end{enumerate}
then $\{x_n\}$ has a convergent subsequence.
\end{definition}

It should be noticed that in Definition \ref{def2}, the $(PSC)_{F,c}$ condition reduces to
the Cerami Condition when $F=X$ and $\Phi\in C^1(X;R)$.

We can define the $\delta$-distance (geodesic distance) ( \cite{8}):
\begin{equation}\label{eq2}
\delta(x_1,x_2):=\inf\{l(c): c\in C^1([0,1],X),c(0)=x_1,c(1)=x_2\},
\end{equation}
where
$$
l(c):=\int^1_0\frac{\|\dot{c}(t)\|}{1+\|c(t)\|}dt.
$$
we let $dist_{\delta}(x,F)=\inf\{\delta(x,y) : y\in F\}.$

\begin{proposition}
The geodesic distance $\delta$ has the following properties ( \cite{8}):
\begin{enumerate}
\item[(1).] $\delta (x_1,x_2)\leq \|x_1-x_2\|$,
\item[(2).] For every norm-bounded set $B$ in  $X$, there is $\gamma >0$
such that $\forall x_1, x_2\in B$, there holds $\delta(x_1,x_2)\geq\gamma\|x_1-x_2\|$.
\end{enumerate}
\end{proposition}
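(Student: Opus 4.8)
The plan is to obtain both estimates directly from the definition of the functional $l(c)$; the only delicate point is that a competing path $c\in C^1([0,1],X)$ joining $x_1$ and $x_2$ is allowed to leave any prescribed bounded set, and on the part where $\|c(t)\|$ is large the integrand $\|\dot c(t)\|/(1+\|c(t)\|)$ is small. For (1) I would simply insert the straight segment $c(t)=(1-t)x_1+tx_2$, $t\in[0,1]$, into the definition of $l(c)$: then $\dot c(t)=x_2-x_1$, and since $1+\|c(t)\|\ge 1$ we get $l(c)=\int_0^1\|\dot c(t)\|/(1+\|c(t)\|)\,dt\le\int_0^1\|x_1-x_2\|\,dt=\|x_1-x_2\|$; taking the infimum over admissible paths yields $\delta(x_1,x_2)\le\|x_1-x_2\|$.

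For (2), fix $R>0$ with $B\subset\{x:\|x\|\le R\}$ (if $R=0$ the statement is trivial), pick $x_1,x_2\in B$ and an arbitrary $c\in C^1([0,1],X)$ with $c(0)=x_1$, $c(1)=x_2$, and set $M:=\max_{t\in[0,1]}\|c(t)\|<\infty$ (the maximum exists because $t\mapsto\|c(t)\|$ is continuous on the compact interval $[0,1]$). I would then record two elementary lower bounds for $l(c)$. The first uses only $1+\|c(t)\|\le 1+M$ together with $\int_0^1\|\dot c(t)\|\,dt\ge\|c(1)-c(0)\|$, giving $l(c)\ge\|x_1-x_2\|/(1+M)$. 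For the second, note that $\rho(t):=\|c(t)\|$ is Lipschitz with $|\rho'(t)|\le\|\dot c(t)\|$ a.e., so that $l(c)\ge\int_0^1\frac{|\rho'(t)|}{1+\rho(t)}\,dt$; choosing $\tau$ with $\rho(\tau)=M$, splitting the integral at $\tau$ and integrating $\rho'/(1+\rho)$ on each piece gives $l(c)\ge\ln\frac{1+M}{1+\|x_1\|}+\ln\frac{1+M}{1+\|x_2\|}\ge 2\ln\frac{1+M}{1+R}$.

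Finally I would split into two cases according to the size of $M$. If $M\le 2R+1$, then $1+M\le 2(1+R)$ and the first bound gives $l(c)\ge\|x_1-x_2\|/(2(1+R))$. If $M>2R+1$, then $(1+M)/(1+R)>2$, so the second bound gives $l(c)\ge 2\ln 2$, while $\|x_1-x_2\|\le\|x_1\|+\|x_2\|\le 2R$ forces $l(c)\ge\frac{\ln 2}{R}\,\|x_1-x_2\|$. Hence with $\gamma:=\min\{\,1/(2(1+R)),\ (\ln 2)/R\,\}>0$, which depends only on $B$, every admissible path satisfies $l(c)\ge\gamma\|x_1-x_2\|$, and passing to the infimum over $c$ gives $\delta(x_1,x_2)\ge\gamma\|x_1-x_2\|$ for all $x_1,x_2\in B$. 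The one genuine obstacle is the second bound: without it a path escaping toward infinity could make $l(c)$ arbitrarily small, but an excursion out to norm $M$ and back necessarily costs at least a logarithmic amount of $\delta$-length, and that is precisely what rescues the uniform lower bound; everything else is routine.
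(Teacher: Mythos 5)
Your argument is correct and complete. The paper itself offers no proof of this proposition --- it simply cites Ekeland's book \cite{8} --- so there is nothing internal to compare against; your write-up supplies the missing argument in full. Part (1) by inserting the straight segment is the standard one-line computation. For part (2) you correctly identify the only real danger, namely a competitor path escaping far from $B$ where the integrand is damped, and you neutralize it with the right tool: since $\rho(t)=\|c(t)\|$ is Lipschitz with $|\rho'|\le\|\dot c\|$ a.e., the $\delta$-length dominates $\int|\,d\ln(1+\rho)|$, so an excursion out to norm $M$ and back costs at least $2\ln\frac{1+M}{1+R}$. The case split at $M=2R+1$ then cleanly yields the uniform constant $\gamma=\min\bigl\{\tfrac{1}{2(1+R)},\tfrac{\ln 2}{R}\bigr\}$, and you have handled the degenerate case $R=0$. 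All the analytic ingredients you invoke (differentiability a.e.\ and the fundamental theorem of calculus for the real Lipschitz function $\ln(1+\rho)$, and $\int_0^1\|\dot c\|\,dt\ge\|c(1)-c(0)\|$ for $C^1$ Banach-space-valued paths) are legitimate. No gaps.
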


\begin{definition}\label{def3}
Let $X$ be a Banach space, and $F\subset X$ a closed subset. We say $\Phi\in C^{1-0}(X;\mathds{R})$
meets the $(CPS)_{F,c;\delta}$ condition when the following is true: if $\{x_n\}\subset X$ satisfies
\begin{enumerate}
\item[(1)] $dist_{\delta}(x_n,F)\rightarrow0$,
\item[(2)] $\Phi(x_n)\rightarrow c$,
\item[(3)] $(1+\|x_n\|)\cdot\min\limits_{y^*\in \partial\Phi(x_n)}\|y^*\|\rightarrow 0$,
\end{enumerate}
then $\{x_n\}$ has a convergent subsequence.
\end{definition}
We can now state the Mountain-Pass Theorem generalized by Ghoussoub-Preiss \cite{9}
for a continuous and G\^{a}teaux-differentiable functional statisfying the $(PS)_{F,c}$ condition:
\begin{theorem}{ \rm( \cite{9})}\label{thm 2}
Let $\varphi: X\rightarrow \mathds{R}$ be a continuous and G\^{a}teaux-differentiable
functional on a Banach space $X$ such that $\varphi':X\rightarrow X^*$ is continuous from
the norm topology of $X$ to the $w^*-$topology of $X^*$.
Take $u,v\in X$, and let
$$
c:=\inf_{g\in\Gamma}\max_{t\in [0,1]}\varphi(g(t))
$$
where $\Gamma=\Gamma_u^v$ is the set of all continuous paths joining $u$ and $v$.
Suppose $F$ is a closed subset of $X$ such that $F\cap\{x\in X:\varphi(x)\geq c\}$
separates $u$ and $v$ and $\varphi$ satisfies the $(PS)_{F,c}$ condition,
then there exists a critical point $\bar{x}\in F$ for $\varphi$ on $F$ with critical value $c$:
 $\varphi(\bar{x})=c,\varphi'(\bar{x})=0$.
\end{theorem}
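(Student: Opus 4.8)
The plan is to prove Theorem~\ref{thm 2} by applying Ekeland's variational principle on the path space $\Gamma=\Gamma_u^v$ and converting approximate minimality of the max‑functional into an approximate critical point that the separation hypothesis forces to lie near $F$. First I would fix the setting: equip $\Gamma$ with the uniform metric $\rho(g,h)=\max_{t\in[0,1]}\|g(t)-h(t)\|$, which makes $(\Gamma,\rho)$ complete since $X$ is a Banach space, and set $I(g):=\max_{t\in[0,1]}\varphi(g(t))$. Because $\varphi$ is continuous, $I$ is real‑valued and lower semicontinuous on $(\Gamma,\rho)$, with $\inf_{\Gamma}I=c$ by the definition of $c$; and since $F\cap\{\varphi\ge c\}$ separates $u$ and $v$, every $g\in\Gamma$ meets this set, so in fact $I(g)\ge c$ for all $g$.

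Next I would argue by contradiction, assuming $\varphi$ has no critical point $\bar x\in F$ with $\varphi(\bar x)=c$. A routine compactness argument using $(PS)_{F,c}$ then yields $\delta\in(0,1)$ and $\eta>0$ such that
$$\|\varphi'(x)\|\ge\eta\quad\text{whenever}\quad d(x,F)\le 2\delta\ \text{ and }\ |\varphi(x)-c|\le 2\delta;$$
indeed, any sequence violating this inequality would be a $(PS)_{F,c}$ sequence whose limit lies in $F$ (which is closed), has $\varphi$‑value $c$, and has vanishing derivative (by the norm‑to‑$w^*$ continuity of $\varphi'$, a norm‑convergent sequence along which $\|\varphi'\|\to0$ has $\varphi'$‑limit $0$), i.e.\ is exactly such a critical point. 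Now fix $\epsilon>0$ small relative to $\delta$ and $\eta$ and apply Ekeland's variational principle to $I$ on $(\Gamma,\rho)$: there is $g\in\Gamma$ with $I(g)\le c+\epsilon$ and $I(h)\ge I(g)-\epsilon\,\rho(h,g)$ for all $h\in\Gamma$. Writing $m:=I(g)\in[c,c+\epsilon]$, the separation hypothesis provides $t_0$ with $g(t_0)\in F$ and $\varphi(g(t_0))\ge c$, so the maximum of $\varphi\circ g$ is attained, up to an error $\epsilon$, at a point of $F$ at level within $\epsilon$ of $c$.

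Then I would deform $g$ downward near its peak to violate the Ekeland inequality. On $\mathcal N=\{x:\ d(x,F)\le 2\delta,\ |\varphi(x)-c|\le 2\delta\}$ one has $\|\varphi'\|\ge\eta$, so there is a locally Lipschitz pseudo‑gradient field $W$ with $\|W(x)\|\le1$ and $\langle\varphi'(x),W(x)\rangle\ge\eta/2$ near the part of $g$ inside $\mathcal N$; let $\sigma_s$ be the flow of $-W$. Choosing a Lipschitz cut‑off $\chi:[0,1]\to[0,1]$ equal to $1$ where $\varphi(g(t))\ge m-\delta/2$ and to $0$ where $\varphi(g(t))\le m-\delta$ (and near the endpoints $t=0,1$), the path $h_s(t):=\sigma_{s\chi(t)}(g(t))$ lies in $\Gamma$, satisfies $\rho(h_s,g)\le s$, and obeys $\varphi(h_s(t))\le m-\eta s/4$ on the near‑$F$ part of its peak for small $s>0$. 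Using that — by the separation — the peak of $g$ is effectively located near $F$, this gives $I(h_s)\le m-\eta s/4<I(g)-\epsilon\,\rho(h_s,g)$ as soon as $\epsilon<\eta/4$, contradicting the choice of $g$. This contradiction shows the assumed $\delta,\eta$ cannot exist, hence $\varphi$ has a critical point $\bar x\in F$ with $\varphi(\bar x)=c$ and $\varphi'(\bar x)=0$, as claimed.

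The step I expect to be the main obstacle is making the deformation decrease the \emph{global} maximum $I(g)$, not merely the value of $\varphi$ near the crossing point $g(t_0)$: one must show, invoking the separation hypothesis, that for a near‑minimizing path the peak cannot escape a neighbourhood of $F$ — a peak point far from $F$ is at worst a critical point of $\varphi$ outside $F$ (or an endpoint), and since such points form no barrier between $u$ and $v$, the path can be perturbed off them while its near‑$F$ peak is driven down by the flow of $-W$. A secondary technical point is constructing the locally Lipschitz field $W$ when $\varphi$ is only G\^{a}teaux‑differentiable with $\varphi'$ merely norm‑to‑$w^*$ continuous (cover $\mathcal N$ by balls on each of which a single unit vector nearly realizes $\|\varphi'\|$, and patch with a locally Lipschitz partition of unity), together with the first‑order estimate along the flow; in the locally Lipschitz generalization pursued in this paper, $\varphi'$ is replaced by the element of minimal norm in Clarke's $\partial\Phi$ and $\langle\varphi',W\rangle$ by the generalized directional derivative $\Phi^0$, whose properties were recorded above.
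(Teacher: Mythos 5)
There is a genuine gap, and it sits exactly where you flagged it. Your scheme applies Ekeland to the unpenalized functional $I(g)=\max_t\varphi(g(t))$ and then tries to push the path down with a pseudo-gradient flow supported near $F$; but to contradict the Ekeland inequality you must lower the \emph{global} maximum, and a near-optimal path may have peak points at level close to $c$ that are far from $F$. At such points $(PS)_{F,c}$ gives you nothing: the lower bound $\|\varphi'\|\ge\eta$ you extract from the compactness condition is only valid in the tube $\{d(x,F)\le 2\delta,\ |\varphi(x)-c|\le 2\delta\}$, so the flow of $-W$ cannot produce a definite decrease at a far-from-$F$ peak, and cutting $\chi$ off there leaves $I(h_s)\ge m$, killing the contradiction. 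Your proposed rescue --- ``a peak point far from $F$ is at worst a critical point of $\varphi$ outside $F$ (or an endpoint), and since such points form no barrier the path can be perturbed off them'' --- is unjustified: nothing prevents the infimum $c$ from being approached by paths that must also climb a second ridge away from $F$ whose height tends to $c$, with no critical points and no Palais--Smale control there; then every near-optimal path has an unremovable far peak at level $\approx m$, your deformation does not decrease $I$, and the argument yields no conclusion (note the theorem's conclusion is existence of a critical point \emph{in} $F$, so ``a near-critical point far from $F$'' is not an acceptable alternative outcome).

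This difficulty is precisely what the Ghoussoub--Preiss device --- which is also the engine of the proofs of Theorems \ref{thm 5}--\ref{thm 8} in this paper --- is designed to remove, and your proposal omits it: one perturbs the functional before applying Ekeland, working with $\varphi(f)=\max_t\bigl[\Phi(f(t))+\Psi(f(t))\bigr]$ where $\Psi(x)=\max\{0,\varepsilon^2-\varepsilon\,\mathrm{dist}_{\delta}(x,F_{\gamma})\}$ rewards proximity to $F_\gamma=F\cap\{\Phi\ge\gamma\}$, and one restricts to the sub-path on $[t_0,t_1]$ that stays within $\delta$-distance $\varepsilon$ of $F_{\gamma}$ (with endpoints at distance $\ge\varepsilon$, so the maximum set $M$ avoids them). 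The separation hypothesis then forces the crossing point to carry the full bonus $\varepsilon^2$, so the perturbed maximum is $\ge\gamma+\varepsilon^2$ while the values along the comparison path are $\le\gamma+\tfrac54\varepsilon^2$; consequently the Ekeland point's maximum set $M$ automatically lies within $\tfrac{3\varepsilon}{2}$ of $F$, and the variational inequality is violated directly by a Tietze-extended direction field on $M$, with no pseudo-gradient flow, no contradiction hypothesis, and no need to control peaks away from $F$. If you want to keep your deformation-style outline, you must at minimum incorporate such a penalization (or an equivalent localization of the max set near $F$); as written, the step ``the peak cannot escape a neighbourhood of $F$'' is not a consequence of the hypotheses. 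Your secondary point (building a locally Lipschitz decrease field from norm-to-$w^*$ continuity of $\varphi'$ via a partition of unity) is fine in principle but becomes unnecessary once the penalization route is taken.
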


A key ingredient in the proof of Theorem \ref{thm 2} is provided by the following
fundamental theorem in non-convex and nonlinear functional analysis established in
the 1974 paper of Ivar Ekeland \cite{7}.

\begin{theorem}{\rm( \cite{7})}\label{thm 4} Let $(X,d)$ be a complete metric space
with metric $d$ and $f: X\rightarrow \mathds{R}\cup\{+\infty\}$ a lower semi-continuous
functional not identically $+\infty$ which is bounded from below.
Let $\varepsilon>0$ and $u\in X$ such that
$
f(u)\leq\inf_{x\in X}f(x)+\varepsilon.
$
Then for any given $\lambda>0$, there exists $v_{\lambda}\in X$
such that $f(v_{\lambda})\leq f(u)$, $d(u,v_{\lambda})\leq\lambda$, and
$
f(w)>f(v_{\lambda})-\frac{\varepsilon}{\lambda} d(v_{\lambda},w),\ \ \forall w\neq v_{\lambda}.
$
\end{theorem}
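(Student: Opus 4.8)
The plan is to run the classical order-theoretic proof of Ekeland's principle. On $X$ I would introduce the relation $w \preceq v \iff f(w) \le f(v) - \frac{\varepsilon}{\lambda} d(v,w)$ and check that it is a partial order: reflexivity is immediate, antisymmetry follows by adding the two defining inequalities for $w\preceq v$ and $v\preceq w$ (which forces $d(v,w)\le 0$), and transitivity is just the triangle inequality for $d$. (If one prefers, replacing $d$ by $d/\lambda$ reduces everything to the case $\lambda=1$, but this is cosmetic.)

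Next I would construct a nested sequence of sets. Put $u_0 := u$; given $u_n$, let $S_n := \{w\in X : w\preceq u_n\}$, which is nonempty since $u_n\in S_n$. Each $S_n$ is closed: if $w_k\to w$ with $w_k\preceq u_n$, then lower semicontinuity of $f$ and continuity of $d(\cdot,u_n)$ give $f(w)\le \liminf_k f(w_k)\le f(u_n)-\frac{\varepsilon}{\lambda}d(w,u_n)$, so $w\preceq u_n$. Since $f$ is bounded below, $m_n := \inf_{S_n} f$ is finite, and I would choose $u_{n+1}\in S_n$ with $f(u_{n+1})\le m_n + 2^{-n}$; transitivity of $\preceq$ then yields $S_{n+1}\subseteq S_n$.

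The crucial estimate is $\operatorname{diam} S_n\to 0$: for $w\in S_{n+1}$ we have $w\preceq u_{n+1}$ and $w\in S_n$, so $\frac{\varepsilon}{\lambda}d(w,u_{n+1})\le f(u_{n+1})-f(w)\le (m_n+2^{-n})-m_n = 2^{-n}$, whence $\operatorname{diam} S_{n+1}\le 2^{1-n}\lambda/\varepsilon\to 0$. By completeness of $(X,d)$, the nested closed sets of vanishing diameter intersect in a single point: $\bigcap_n S_n = \{v_\lambda\}$.

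Finally I would read off the three conclusions. From $v_\lambda\in S_0$, i.e.\ $v_\lambda\preceq u$, we get $f(v_\lambda)\le f(u)-\frac{\varepsilon}{\lambda}d(v_\lambda,u)\le f(u)$; combining the hypothesis $f(u)\le \inf_X f+\varepsilon$ with $f(v_\lambda)\ge \inf_X f$ then gives $d(u,v_\lambda)\le \frac{\lambda}{\varepsilon}\bigl(f(u)-f(v_\lambda)\bigr)\le\lambda$. For the strict inequality, suppose some $w\ne v_\lambda$ satisfied $f(w)\le f(v_\lambda)-\frac{\varepsilon}{\lambda}d(v_\lambda,w)$, i.e.\ $w\preceq v_\lambda$; since $v_\lambda\preceq u_n$ for every $n$, transitivity would force $w\in\bigcap_n S_n = \{v_\lambda\}$, a contradiction. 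I expect no serious obstacle in this argument; the only step requiring genuine care is the verification that the $S_n$ are closed with diameter tending to zero, since this is precisely where lower semicontinuity and boundedness below of $f$ are used and where the completeness of $X$ becomes effective.
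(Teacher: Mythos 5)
Your proof is correct and complete. The paper does not prove Theorem \ref{thm 4} at all --- it is quoted from Ekeland's 1974 paper \cite{7} as a known tool --- and your argument is precisely the standard one (the order-theoretic construction with the relation $w\preceq v \iff f(w)\le f(v)-\frac{\varepsilon}{\lambda}d(v,w)$, nested closed sets $S_n$ of vanishing diameter, and Cantor's intersection theorem), so there is nothing to reconcile. The only cosmetic remark is that antisymmetry of $\preceq$ could in principle fail between two points where $f=+\infty$, but your argument never actually uses antisymmetry (a preorder suffices, since uniqueness of the intersection point comes from the diameter estimate), and in any case every point of $S_0$ has a finite $f$-value because $f(u)<+\infty$.
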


Ekeland's variational principle has found numerous applications;
in particular, we would like to observe that prior to Ghoussoub-Preiss \cite{9}
it was used by Shi \cite{21} to prove a Mountain Pass Lemma and general min-max theorems for
locally Lipschitz functionals (K.C.Chang \cite{5}). In this paper, we will use Ekeland's
variational principle to generalize the Ghoussoub-Preiss Theorem to the case of locally Lipschitz functional of
class $C^{1-0}$ satisfying the conditions $(CPS)_{F,c;\delta}$ or $(CPS)_{F,c}$.

\begin{theorem}\label{thm 5} Let $X$ be a Banach space with norm $||.||$, $C^{0}([0,1];X)$
the space of continuous mappings from $[0,1]$ to $X$, and $\Phi: X\rightarrow \mathds{R}$ a locally Lipschitz functional.
 For $z_0,z_1\in X$, define
$$
\Gamma :=\{c\in C^0([0,1];X):c(0)=z_0,c(1)=z_1\},
\gamma :=\inf\limits_{c\in \Gamma}\max\limits_{0\leq t\leq 1}\Phi(c(t)),
$$
and set
$
\Phi_{\gamma}:=\{x\in X:\Phi(x)\geq \gamma\}.
$
If $F\subset X$ is a closed subset such that $F\cap\Phi_{\gamma}$ separates $z_0$ and $z_1$,
then there exists a sequence $\{x_n\}\subset X$ such that
$
dist_{\delta}(x_n,F)\rightarrow 0$, $\Phi(x_n)\rightarrow \gamma$ and
$(1+\|x_n\|)\min\limits_{y^*\in \partial\Phi(x_n)}\|y^*\|\rightarrow 0$.
\end{theorem}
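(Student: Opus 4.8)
The plan is to run Ekeland's variational principle on the space of paths $\Gamma$ equipped with a suitable metric, exactly as in the Ghoussoub--Preiss argument, but replacing the Fréchet derivative by Clarke's generalized gradient and using the geodesic distance $\delta$ to control the "distance to $F$" conclusion. First I would put on $\Gamma$ the metric $D(c_1,c_2):=\max_{t\in[0,1]}\delta(c_1(t),c_2(t))$ (or first $\max_t\|c_1(t)-c_2(t)\|$ and pass to $\delta$ via Proposition on $\delta$), and check $(\Gamma,D)$ is complete. On it define the functional $J(c):=\max_{0\le t\le 1}\Phi(c(t))$, which is lower semicontinuous (in fact continuous) and bounded below by $\gamma$ on $\Gamma$ by definition of $\gamma$. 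Fix $\varepsilon>0$ and pick $c_\varepsilon\in\Gamma$ with $J(c_\varepsilon)\le\gamma+\varepsilon^2$; apply Theorem \ref{thm 4} with $\lambda=\varepsilon$ to get a path $g=g_\varepsilon\in\Gamma$ with $J(g)\le\gamma+\varepsilon^2$, $D(c_\varepsilon,g)\le\varepsilon$, and $J(h)>J(g)-\varepsilon\,D(g,h)$ for all $h\ne g$.

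The heart of the matter is to extract from this "$\varepsilon$-minimality of $g$ among paths" a single point $x_\varepsilon$ on the track of $g$ that lies near $F$, has $\Phi(x_\varepsilon)$ near $\gamma$, and has $\min_{y^*\in\partial\Phi(x_\varepsilon)}\|y^*\|$ small after multiplication by $(1+\|x_\varepsilon\|)$. Let $B_\varepsilon:=\{t\in[0,1]:\Phi(g(t))\ge\gamma-\varepsilon\}$; since $F\cap\Phi_\gamma$ separates $z_0$ and $z_1$, the track $g([0,1])$ must meet $F\cap\Phi_\gamma$, so the compact set $K_\varepsilon:=\{g(t):t\in B_\varepsilon\}$ has $\mathrm{dist}_\delta(K_\varepsilon,F)$ controlled — in fact one shows $\inf_{t\in B_\varepsilon}\mathrm{dist}_\delta(g(t),F)\to 0$ because a point of $g$ on $F\cap\Phi_\gamma$ has $\Phi\ge\gamma>\gamma-\varepsilon$, hence its parameter lies in $B_\varepsilon$, giving $\mathrm{dist}_\delta$-distance $0$ from that $t$. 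The remaining claim — smallness of the generalized gradient somewhere on $K_\varepsilon$ — is argued by contradiction: if $(1+\|g(t)\|)\min_{y^*\in\partial\Phi(g(t))}\|y^*\|\ge\sigma>0$ for \emph{all} $t\in B_\varepsilon$, then at each such point there is a direction $v_t$ with $\|v_t\|\le 1$ and $\Phi^0(g(t),v_t)\le-\sigma/(1+\|g(t)\|)$ (by definition of $\partial\Phi$ and a standard separation/minimax argument, using Proposition on $\Phi^0$, part (1), that $v\mapsto\Phi^0(x,v)$ is convex and positively homogeneous). Using upper semicontinuity of $\Phi^0$ and a partition-of-unity/locally-Lipschitz pseudo-gradient construction, patch the $v_t$ into a locally Lipschitz vector field $W$ on a neighborhood of $K_\varepsilon$ with $\|W\|\le 1$ and $\Phi^0(x,W(x))\le-\sigma/(2(1+\|x\|))$, cut it off to vanish away from $K_\varepsilon$ and near $\{t:\Phi(g(t))\le\gamma-\varepsilon\}$, and flow $g$ along it: $h_s(t):=\eta(s,g(t))$ where $\dot\eta=W(\eta)/ (1+\|\eta\|)$-type scaling makes $s\mapsto\Phi(h_s(t))$ decrease at rate $\gtrsim\sigma$ while $D(g,h_s)\le s$. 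For small $s>0$ this produces $h_s\in\Gamma$ (endpoints fixed since $W$ vanishes where $\Phi\le\gamma-\varepsilon\ge$ near endpoints, as $\Phi(z_0),\Phi(z_1)$ can be assumed $<\gamma$) with $J(h_s)\le J(g)-c\sigma s<J(g)-\varepsilon\,D(g,h_s)$ once $\varepsilon<c\sigma$, contradicting Ekeland minimality.

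Hence for each $\varepsilon>0$ there is $t_\varepsilon\in B_\varepsilon$ with $x_\varepsilon:=g_\varepsilon(t_\varepsilon)$ satisfying $\mathrm{dist}_\delta(x_\varepsilon,F)\to 0$, $\gamma-\varepsilon\le\Phi(x_\varepsilon)\le\gamma+\varepsilon^2$, and $(1+\|x_\varepsilon\|)\min_{y^*\in\partial\Phi(x_\varepsilon)}\|y^*\|\le 2\sigma$ for every $\sigma>0$ — more carefully, one runs the contradiction with $\sigma=\sigma_\varepsilon\to 0$ chosen so that the Ekeland slope $\varepsilon$ still beats $c\sigma_\varepsilon$, e.g. $\sigma_\varepsilon:=\sqrt\varepsilon$, yielding the three convergences along $\varepsilon=1/n$. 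The main obstacle I anticipate is the deformation/pseudo-gradient step in the locally Lipschitz setting: because $\partial\Phi$ is only upper semicontinuous and set-valued, one cannot simply select a continuous gradient vector field, so the construction of the descent flow $W$ must go through a locally Lipschitz pseudo-gradient for $\Phi^0$ (as in K.C. Chang's work \cite{5}), and one must be careful that the cutoff keeping the endpoints fixed is compatible with the region $B_\varepsilon$ where descent is needed — this is exactly where the separation hypothesis "$F\cap\Phi_\gamma$ separates $z_0,z_1$" is used, to guarantee the perturbed path still meets $\Phi_\gamma$ and so still has max $\ge\gamma$, closing the contradiction.
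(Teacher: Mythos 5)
There is a genuine gap, and it is precisely at the point your own summary flags as the place where the separation hypothesis must enter. You apply Ekeland directly to $J(c)=\max_t\Phi(c(t))$ with no modification of the functional, and then hope to locate the almost-critical point near $F$ by observing that the track of $g$ meets $F\cap\Phi_\gamma$ somewhere in $B_\varepsilon$. But the point $x_\varepsilon=g(t_\varepsilon)$ produced by your descent/contradiction argument is (essentially) a point where $\Phi\circ g$ is maximal, and nothing forces a maximum point of $\Phi$ along $g$ to be anywhere near $F$: the fact that $\inf_{t\in B_\varepsilon}\mathrm{dist}_\delta(g(t),F)=0$ says only that \emph{some} parameter in $B_\varepsilon$ lands on $F$, not the one your argument selects. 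This is why the paper does not minimize $J$ but the penalized functional $\varphi(f)=\max_t\{\Phi(f(t))+\Psi(f(t))\}$ with $\Psi(x)=\max\{0,\varepsilon^2-\varepsilon\,\mathrm{dist}_\delta(x,F_\gamma)\}$: the separation hypothesis gives $\varphi(f)\geq\gamma+\varepsilon^2$ for every path (each path crosses $F_\gamma$, where $\Psi=\varepsilon^2$ and $\Phi\geq\gamma$), while $\varphi(\hat c)\leq\gamma+\tfrac54\varepsilon^2$, and it is the $\Psi$-term that forces the maximum set $M$ of the Ekeland path to sit within $\delta$-distance $O(\varepsilon)$ of $F_\gamma$. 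Without a penalization of this kind the conclusion $\mathrm{dist}_\delta(x_n,F)\to 0$ does not follow from your construction.

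A second, related problem is your endpoint cutoff: you keep $h_s\in\Gamma$ by making $W$ vanish where $\Phi\leq\gamma-\varepsilon$, ``as $\Phi(z_0),\Phi(z_1)$ can be assumed $<\gamma$.'' That cannot be assumed; the whole point of the Ghoussoub--Preiss formulation (the ``zero altitude'' case) is to allow $\Phi(z_0)$ or $\Phi(z_1)$ to equal or exceed $\gamma$, provided $z_0,z_1\notin F\cap\Phi_\gamma$. The paper handles this by first passing to a subinterval $[t_0,t_1]$ whose endpoints are at $\delta$-distance $\geq\varepsilon$ from $F_\gamma$ (which is possible since $z_0,z_1\notin F_\gamma$ and $F_\gamma$ is closed), and then using the penalization again to show $t_0,t_1\notin M$ (there $\Psi=0$, so $\Phi+\Psi\leq\gamma+\varepsilon^2/4<\gamma+\varepsilon^2\leq\varphi(\hat f)$); the perturbation direction, built by a partition of unity over the compact parameter set $M$ and extended by Tietze to vanish at $t_0,t_1$, is then a simple linear perturbation $\hat f+hv$ rather than a pseudo-gradient flow. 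Your pseudo-gradient/flow machinery could probably be made rigorous in the locally Lipschitz setting following Chang, but it does not repair either of the two gaps above; both are fixed by the penalization-plus-subinterval device, which is the actual content of the proof.
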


\begin{theorem}\label{thm 6}
Under the assumptions of Theorem \ref{thm 5}, if we add that the set $F$ is
norm-bounded in the Banach space $X$, then there exists a sequence
$\{x_n\}\subset X$ such that
$
d(x_n,F)\rightarrow 0$, $\Phi(x_n)\rightarrow \gamma$ and
$(1+\|x_n\|)\min\limits_{y^*\in \partial\Phi(x_n)}\|y^*\|\rightarrow 0$.
\end{theorem}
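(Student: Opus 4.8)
The plan is to derive Theorem~\ref{thm 6} directly from Theorem~\ref{thm 5}, the only point being to replace the geodesic distance $dist_{\delta}(x_n,F)$ by the norm distance $d(x_n,F)$. First I would apply Theorem~\ref{thm 5}: since $F$ is closed and $F\cap\Phi_{\gamma}$ separates $z_0$ and $z_1$, there is a sequence $\{x_n\}\subset X$ with $dist_{\delta}(x_n,F)\to 0$, $\Phi(x_n)\to\gamma$, and $(1+\|x_n\|)\min_{y^*\in\partial\Phi(x_n)}\|y^*\|\to 0$. The last two conditions are already those claimed in Theorem~\ref{thm 6}, so everything reduces to upgrading the first. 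For each $n$ pick $y_n\in F$ with $\delta(x_n,y_n)\le dist_{\delta}(x_n,F)+\frac{1}{n}$, so that $\delta(x_n,y_n)\to0$.

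The crucial step is to show that $\{x_n\}$ is norm-bounded; only then can property~(2) of the geodesic distance (which is valid only on norm-bounded sets) be invoked. To do this I would compare the length functional with the logarithmic growth of the norm. For any $c\in C^1([0,1],X)$ the map $t\mapsto\log(1+\|c(t)\|)$ is absolutely continuous, with $|\frac{d}{dt}\log(1+\|c(t)\|)|\le \|\dot c(t)\|/(1+\|c(t)\|)$ a.e., hence $|\log(1+\|c(1)\|)-\log(1+\|c(0)\|)|\le l(c)$; taking the infimum over paths joining $x_1$ and $x_2$ gives $|\log(1+\|x_1\|)-\log(1+\|x_2\|)|\le\delta(x_1,x_2)$ for all $x_1,x_2\in X$. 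Applying this to $x_n$ and $y_n$ and using that $F$, hence $\{y_n\}$, is norm-bounded together with $\delta(x_n,y_n)\to0$, we conclude that $\{\log(1+\|x_n\|)\}$ is bounded, i.e. $\{x_n\}$ is norm-bounded.

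Now set $B:=\{x_n:n\ge1\}\cup F$, a norm-bounded subset of $X$. By property~(2) of $\delta$ there is $\gamma_B>0$ with $\delta(u,w)\ge\gamma_B\|u-w\|$ for all $u,w\in B$; in particular $\|x_n-y_n\|\le\gamma_B^{-1}\delta(x_n,y_n)\to0$, whence $d(x_n,F)\le\|x_n-y_n\|\to0$. Combined with $\Phi(x_n)\to\gamma$ and $(1+\|x_n\|)\min_{y^*\in\partial\Phi(x_n)}\|y^*\|\to0$ carried over from Theorem~\ref{thm 5}, this is exactly the assertion of Theorem~\ref{thm 6}.

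The main obstacle is precisely the norm-boundedness of $\{x_n\}$: property~(2) of $\delta$ is only a local statement, so without a uniform norm bound on the $x_n$ one cannot pass from $dist_{\delta}$ to $d$. The logarithmic comparison above is the natural remedy; the one delicate point there is the differential inequality for $t\mapsto\|c(t)\|$, which is only Lipschitz and not $C^1$. This is handled by noting that it is absolutely continuous with a.e. derivative bounded in modulus by $\|\dot c(t)\|$ (reverse triangle inequality), or, if one prefers, by smoothing the norm via $\sqrt{\varepsilon^2+\|c(t)\|^2}$ and letting $\varepsilon\downarrow0$.
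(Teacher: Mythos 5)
Your proof is correct and follows the same overall route as the paper: obtain the sequence from Theorem~\ref{thm 5} (in the paper, from its quantified $\varepsilon$-version), and then convert $dist_{\delta}(x_n,F)\to 0$ into $d(x_n,F)\to 0$ by using the equivalence of $\delta$ with the norm distance on norm-bounded sets. The one point where you go beyond the published argument is in justifying why that equivalence is applicable: the paper simply asserts that for bounded $F$ the $\delta$-distance is equivalent to the norm distance and writes $dist_{\delta}(x,F_{\gamma})\geq c\,d(x,F_{\gamma})$ at the relevant points $x$, without verifying that those points lie in a fixed norm-bounded set. As a global statement that inequality is false --- along the straight path $t\mapsto tx$ one computes $\delta(0,x)\leq\log(1+\|x\|)=o(\|x\|)$ --- so an a priori bound on $\|x_n\|$ is genuinely required before Property~(2) of $\delta$ can be invoked. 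Your inequality $|\log(1+\|x_1\|)-\log(1+\|x_2\|)|\leq\delta(x_1,x_2)$, obtained by differentiating $t\mapsto\log(1+\|c(t)\|)$ along a path and handling the mere Lipschitz regularity of $t\mapsto\|c(t)\|$ as you indicate, supplies exactly that bound. So the proposal is not only correct; it makes rigorous a step that the paper passes over in one sentence.
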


\begin{theorem}\label{thm 7}
Under the assumptions of Theorem \ref{thm 5}, if $\Phi$ satisfies $(CPS)_{F,\gamma;\delta}$ condition,
then $\gamma$ is a critical value for $\Phi$:
$\Phi(\bar{x})=\gamma,\ \ 0\in\Phi'(\bar{x})$.
\end{theorem}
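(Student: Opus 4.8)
\emph{Proof proposal.}
The plan is to feed the approximate-critical-point sequence furnished by Theorem~\ref{thm 5} into the compactness hypothesis $(CPS)_{F,\gamma;\delta}$ and then identify its limit as the desired critical point. First I would apply Theorem~\ref{thm 5} to obtain $\{x_n\}\subset X$ with $dist_{\delta}(x_n,F)\to 0$, $\Phi(x_n)\to\gamma$, and $(1+\|x_n\|)\min_{y^*\in\partial\Phi(x_n)}\|y^*\|\to 0$. Since $1+\|x_n\|\ge 1$, the last relation already gives $\min_{y^*\in\partial\Phi(x_n)}\|y^*\|\to 0$; choosing $y_n^*\in\partial\Phi(x_n)$ with $\|y_n^*\|\le\min_{y^*\in\partial\Phi(x_n)}\|y^*\|+\frac{1}{n}$ (the minimum is in fact attained, $\partial\Phi(x_n)$ being convex and weak\textsuperscript{*}-compact with $\|\cdot\|$ weak\textsuperscript{*}-lower semicontinuous) I obtain $y_n^*\to 0$ in $X^*$. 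The sequence $\{x_n\}$ now satisfies conditions (1)--(3) of Definition~\ref{def3}, so $(CPS)_{F,\gamma;\delta}$ supplies a subsequence, still denoted $\{x_n\}$, with $x_n\to\bar x$ in the norm of $X$.

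The first genuine point is that $\bar x\in F$, and this is where the geometry of the geodesic distance is used; I expect it to be the most delicate step. For each $n$ pick $y_n\in F$ with $\delta(x_n,y_n)\le dist_{\delta}(x_n,F)+\frac{1}{n}\to 0$ and a $C^1$ path $c_n:[0,1]\to X$ joining $x_n$ to $y_n$ with $l(c_n)\le\delta(x_n,y_n)+\frac{1}{n}\to 0$. Integrating the almost-everywhere inequality $\frac{d}{dt}\log(1+\|c_n(t)\|)\le\frac{\|\dot c_n(t)\|}{1+\|c_n(t)\|}$ over $[0,1]$ yields $1+\|y_n\|\le(1+\|x_n\|)e^{l(c_n)}$; since the convergent sequence $\{x_n\}$ is bounded and $l(c_n)\to 0$, the sequence $\{y_n\}$ is norm-bounded. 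Enclosing $\{x_n\}\cup\{y_n\}$ in a norm ball $B$, the geodesic distance obeys an estimate $\delta(x_1,x_2)\ge\kappa_B\|x_1-x_2\|$ on $B$ for some $\kappa_B>0$ (the second listed property of $\delta$), hence $\kappa_B\|x_n-y_n\|\le\delta(x_n,y_n)\to 0$ and $y_n\to\bar x$ as well; since $F$ is closed, $\bar x\in F$.

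It remains to pin down the value and the generalized gradient at $\bar x$. Continuity of the locally Lipschitz $\Phi$ gives $\Phi(\bar x)=\lim_n\Phi(x_n)=\gamma$. For the gradient, $y_n^*\in\partial\Phi(x_n)$ means $\langle y_n^*,v\rangle\le\Phi^0(x_n,v)$ for every $v\in X$; letting $n\to\infty$, the left side tends to $0$ because $\|y_n^*\|\to 0$, while by the upper semicontinuity of $(x,v)\mapsto\Phi^0(x,v)$ (immediate from its definition as a $\limsup$) we have $\limsup_n\Phi^0(x_n,v)\le\Phi^0(\bar x,v)$. Hence $\langle 0,v\rangle\le\Phi^0(\bar x,v)$ for all $v\in X$, which by the definition of Clarke's subdifferential says precisely $0\in\partial\Phi(\bar x)$; equivalently one may invoke the weak\textsuperscript{*}-closedness of the graph of the multivalued map $x\mapsto\partial\Phi(x)$ together with the fact that the strongly convergent selections $y_n^*\to 0$ also converge weak\textsuperscript{*}. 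Thus $\bar x\in F$, $\Phi(\bar x)=\gamma$ and $0\in\partial\Phi(\bar x)$, so $\gamma$ is a critical value of $\Phi$.
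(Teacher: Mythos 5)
Your proposal is correct and follows exactly the route the paper intends: the paper's own ``proof'' of Theorem~\ref{thm 7} is the single remark that it follows from Theorem~\ref{thm 5} together with the $(CPS)_{F,\gamma;\delta}$ condition, and you have simply supplied the standard details (attainment of the minimum on the weak\textsuperscript{*}-compact set $\partial\Phi(x_n)$, the upper semicontinuity of $x\mapsto\Phi^0(x,v)$ to pass to the limit, and the comparison of $\delta$ with the norm on bounded sets to locate $\bar x$ in $F$, which is in fact more than the statement of Theorem~\ref{thm 7} requires). No gaps.
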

\begin{theorem}\label{thm 8}
Under the assumptions of Theorem \ref{thm 5}, if we add the condition that the set $F$ is
bounded in the norm of the Banach space $X$, then we can change the
$(CPS)_{F,\gamma;\delta}$ condition to the $(CPS)_{F,\gamma}$ condition,
and conclude there exists a critical point $\bar{x}\in F$ for
$\Phi$ on $F$ with critical value $\gamma: \Phi(\bar{x})=\gamma,0\in\Phi'(\bar{x})$.
\end{theorem}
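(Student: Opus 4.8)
The plan is to obtain Theorem \ref{thm 8} as a consequence of Theorem \ref{thm 6}, the $(CPS)_{F,\gamma}$ condition, and the upper semicontinuity of Clarke's generalized gradient. First, since $F$ is norm-bounded, Theorem \ref{thm 6} applies directly and produces a sequence $\{x_n\}\subset X$ with $d(x_n,F)\to 0$, $\Phi(x_n)\to\gamma$, and $(1+\|x_n\|)\min_{y^*\in\partial\Phi(x_n)}\|y^*\|\to 0$. This sequence satisfies exactly the three hypotheses listed in Definition \ref{def2}, so the assumed $(CPS)_{F,\gamma}$ condition yields a subsequence, still denoted $\{x_n\}$, converging strongly to some $\bar{x}\in X$.

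Next I would identify $\bar{x}$ as the desired critical point. Since $F$ is closed and $d(x_n,F)\to 0$, the strong limit $\bar{x}$ belongs to $F$. Since a locally Lipschitz functional is continuous, $\Phi(\bar{x})=\lim_n\Phi(x_n)=\gamma$. It then remains to prove $0\in\partial\Phi(\bar{x})$. Because $\{x_n\}$ converges, $\|x_n\|$ is bounded, so the weight $1+\|x_n\|$ is bounded above and below away from $0$; hence condition (3) forces $\min_{y^*\in\partial\Phi(x_n)}\|y^*\|\to 0$. For each $n$ the set $\partial\Phi(x_n)$ is nonempty, convex and weak$^*$-compact, and the dual norm is weak$^*$-lower semicontinuous, so this minimum is attained at some $y_n^*\in\partial\Phi(x_n)$ with $\|y_n^*\|\to 0$.

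Finally I would pass to the limit inside the generalized gradient. By the definition of $\partial\Phi$, $\langle y_n^*,v\rangle\le\Phi^0(x_n,v)$ for every $v\in X$. Letting $n\to\infty$, the left-hand side tends to $0$ because $|\langle y_n^*,v\rangle|\le\|y_n^*\|\,\|v\|\to 0$, while $\limsup_n\Phi^0(x_n,v)\le\Phi^0(\bar{x},v)$ by the upper semicontinuity of the map $(x,v)\mapsto\Phi^0(x,v)$ (Clarke \cite{6}). Therefore $0\le\Phi^0(\bar{x},v)$ for all $v\in X$, which is precisely the assertion that $0\in\partial\Phi(\bar{x})$. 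Combining the three points, $\bar{x}\in F$ is a critical point of $\Phi$ with $\Phi(\bar{x})=\gamma$, proving the theorem.

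The step I expect to be the main obstacle is this last limiting argument for $\partial\Phi$: one must use that what Theorem \ref{thm 6} and the $(CPS)_{F,\gamma}$ condition provide is \emph{strong} convergence $x_n\to\bar{x}$ in $X$ (so that the upper semicontinuity of $\Phi^0$ can be invoked), together with the facts that the $\min$ over $\partial\Phi(x_n)$ is genuinely attained on the weak$^*$-compact set $\partial\Phi(x_n)$ and that the boundedness of $\|x_n\|$ removes the weight $1+\|x_n\|$ from condition (3); these are the technical points that make the closed-graph passage to the limit legitimate.
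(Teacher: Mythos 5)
Your proof is correct and takes essentially the same route as the paper, whose entire argument for Theorem \ref{thm 8} is the one-line remark that it follows from Theorem \ref{thm 6} together with the $(CPS)_{F,\gamma}$ condition. The details you supply for the limit passage --- attainment of the minimizing $y_n^*\in\partial\Phi(x_n)$ by weak$^*$-compactness of the generalized gradient and the upper semicontinuity of $x\mapsto\Phi^0(x,v)$ --- are exactly the steps the paper leaves implicit, and they are carried out correctly.
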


In 2009, Goga \cite{10} studied a general Mountain Pass Theorem for local Lipschitz function.
Let $(E,\|\cdot\|)$ be a Banach space, $S$ a compact metric space and $S_0$ a closed subset of $S$.
Let $C(S,E)$ be the Banach space of all $E-$valued bounded continuous mapping on $S$
with the norm $\|\gamma\|:=\sup\limits_{x\in S} \|\gamma(x)\|$. Let $\gamma_0\in C(E,S)$
be a fixed element and define $$\Gamma=\{\gamma\in C(S,E):\gamma(s)=\gamma_0(s),\forall s\in S_0\},
c:=\inf_{\gamma\in \Gamma}\sup_{s\in S}f(\gamma(s)),$$ where $f$ is a real-valued function defined on $E$.
Goga's result is the following:
\begin{theorem}\label{thm 3}
Let $f: E\rightarrow \mathds{R}$ be a locally Lipschitz function and $F$ a closed nonempty subset of $E$.
Assume that
\begin{enumerate}
\item[(a)] $\gamma(S)\cap F\cap f_c\neq \emptyset, \forall \gamma\in \Gamma$, where $f_c=\{x\in E:f(x)\geq c\}$,
\item[(b)] $dist(\gamma_0(S_0),F)>0$, where $dist(\cdot,F)$ is the distance function to $F$ in $E$.
\end{enumerate}

Then for every $\varepsilon>0$ there exist $x_\varepsilon\in E$ such that
\begin{enumerate}
\item[(i)] $dist(x_\varepsilon,F)<\frac{3\varepsilon}{2}$,
\item[(ii)] $c\leq f(x_\varepsilon)<\varepsilon+\frac{5\varepsilon^2}{4}$,
\item[(iii)] $dist(0, \partial f(x_\varepsilon))\leq2\varepsilon$, where $\partial f(x)$ is the
Clark sub-differential of $f$ at $x$.
\end{enumerate}
\end{theorem}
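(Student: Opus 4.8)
The plan is to transplant the minimax from $E$ to the path space $\Gamma$, locate there (via Ekeland's principle, Theorem~\ref{thm 4}) a nearly minimizing path, and then show that if the conclusion failed one could deform that path into one violating hypothesis~(a). Equip $\Gamma$ with the uniform metric $\rho(\gamma,\sigma):=\sup_{s\in S}\|\gamma(s)-\sigma(s)\|$; since $C(S,E)$ is complete and $\Gamma$ is the closed set of maps agreeing with $\gamma_{0}$ on $S_{0}$, the space $(\Gamma,\rho)$ is complete. Put $J(\gamma):=\sup_{s\in S}f(\gamma(s))$; the supremum is attained ($S$ compact, $\gamma$ continuous), $J$ is continuous on $(\Gamma,\rho)$ since $f$ is, and $J\ge c$ on $\Gamma$ because by~(a) every $\gamma\in\Gamma$ meets $F\cap f_{c}$. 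Together with the definition of $c$ this gives $\inf_{\Gamma}J=c$. Fix $\varepsilon>0$ smaller than a fixed fraction of $\operatorname{dist}(\gamma_{0}(S_{0}),F)>0$, which is legitimate by~(b); applying Theorem~\ref{thm 4} to $J$ at level $c+\varepsilon^{2}$ with Ekeland parameter $\lambda=\varepsilon$ yields $\gamma^{*}\in\Gamma$ with $c\le J(\gamma^{*})\le c+\varepsilon^{2}$ and the quasi-minimality $J(\sigma)\ge J(\gamma^{*})-\varepsilon\,\rho(\gamma^{*},\sigma)$ for all $\sigma\in\Gamma$.

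Now argue by contradiction: suppose no $x_{\varepsilon}$ satisfies (i)--(iii) (reading (ii) as $c\le f(x_{\varepsilon})<c+\varepsilon+\frac{5}{4}\varepsilon^{2}$). Then $\operatorname{dist}(0,\partial f(x))>2\varepsilon$ for every $x$ in the collar
$$ R:=\left\{x\in E:\ \operatorname{dist}(x,F)<\frac{3}{2}\varepsilon,\ \ c\le f(x)<c+\varepsilon+\frac{5}{4}\varepsilon^{2}\right\}. $$
As $\partial f$ is upper semicontinuous and locally bounded, $\{x:\operatorname{dist}(0,\partial f(x))>2\varepsilon\}$ is open and contains $R$; at each of its points a minimax/Hahn--Banach argument applied to the weak$^{*}$-compact convex set $\partial f(x)$ produces a unit vector $v$ with $\langle\xi,v\rangle>2\varepsilon$ for all $\xi\in\partial f(y)$ and $y$ near $x$, and a locally finite partition of unity assembles from these a locally Lipschitz field $V$ with $\|V\|\le1$ and $f^{0}(x,V(x))<-2\varepsilon$ on a neighbourhood of $R$. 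The compact set $\gamma^{*}(B)$, where $B:=\{s\in S:\operatorname{dist}(\gamma^{*}(s),F)\le\frac{1}{2}\varepsilon,\ f(\gamma^{*}(s))\ge c\}$, lies in $R$ and is nonempty by~(a). Deform $\gamma^{*}$ along the flow of $V$, damped by two cut-offs: $\eta_{1}:S\to[0,1]$, equal to $1$ on $B$ and supported where $\gamma^{*}(s)$ is within $\frac{3}{2}\varepsilon$ of $F$, and $\eta_{2}:E\to[0,1]$, equal to $1$ near $\gamma^{*}(B)$ and vanishing off the domain of $V$. Because $\varepsilon$ is small, $\operatorname{supp}\eta_{1}$ misses $S_{0}$, so the deformed paths $\gamma_{\tau}$ stay in $\Gamma$; since $\frac{d}{d\tau}f(\gamma_{\tau}(s))\le f^{0}(\gamma_{\tau}(s),\dot{\gamma}_{\tau}(s))\le0$ wherever the flow is active (Clarke's inequality, via Lebourg's mean value theorem), $f$ never increases along an orbit, and on $B$ it decreases at rate $\ge2\varepsilon$ as long as the orbit stays in $R$. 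Running the flow for time of order $\frac{3}{4}\varepsilon$ drops $f$ below $c$ throughout $B$, the displacement staying under the budget $\frac{3}{2}\varepsilon-\frac{1}{2}\varepsilon=\varepsilon$ that keeps the orbit inside $R$. A triangle-inequality chase then forces any $s$ with $\gamma_{\tau}(s)\in F$ to issue from (a neighbourhood of) $B$, whence $f(\gamma_{\tau}(s))<c$; thus $\gamma_{\tau}(S)\cap F\cap f_{c}=\emptyset$, contradicting~(a); alternatively, the same deformation violates the quasi-minimality of $\gamma^{*}$ once the top of the path is lowered. Hence $x_{\varepsilon}$ exists, and the bookkeeping of displacement, descent rate and altitude is precisely what pins down the constants $\frac{3}{2}\varepsilon$, $2\varepsilon$ and $\frac{5}{4}\varepsilon^{2}$.

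The delicate step is the deformation. For a merely locally Lipschitz $f$ there is no gradient to normalise, so $V$ has to be assembled by hand out of the sets $\partial f(x)$ and made locally Lipschitz for the defining ODE to have unique solutions depending continuously on the parameter $s$; and the two cut-offs have to be tuned so that simultaneously the boundary condition on $S_{0}$ is preserved, no orbit escapes the open set on which $V$ controls the descent, and the numerical estimates close with the prescribed constants. Conceptually, the main point is that hypothesis~(a) is what forbids flowing the whole path below level $c$---the role played by Palais's deformation lemma in the classical Ambrosetti--Rabinowitz argument---while hypothesis~(b) is precisely what makes it possible to confine the deformation away from the fixed ends of the paths.
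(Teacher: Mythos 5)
You should first note that the paper does not prove this statement at all: Theorem \ref{thm 3} is Goga's result, quoted from \cite{10} for comparison purposes, and the paper's own closely related Theorems \ref{thm 5}--\ref{thm 6} are proved by a flow-free mechanism, namely Ekeland's principle on the path space applied to the \emph{penalized} functional $\varphi(f)=\max_t\{\Phi(f(t))+\Psi(f(t))\}$ with $\Psi(x)=\max\{0,\varepsilon^{2}-\varepsilon\,\mathrm{dist}_{\delta}(x,F_{\gamma})\}$, a compact set $M$ of maximizing parameters, a direction field chosen on $M$ and extended by Tietze in the parameter, and a direct comparison with the Ekeland inequality; the constants $\tfrac{3}{2}\varepsilon$, $\tfrac{5}{4}\varepsilon^{2}$, $\tfrac{3}{2}\varepsilon\le 2\varepsilon$ fall out of that penalization automatically. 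Your Ekeland step is applied to the unpenalized $J(\gamma)=\sup_{s}f(\gamma(s))$, so it yields no proximity to $F$ whatsoever (and, since your main contradiction is with hypothesis (a) rather than with quasi-minimality, any path with $J\le c+\varepsilon^{2}$ would do); the entire weight of conclusions (i)--(iii) is shifted onto a quantitative deformation lemma for locally Lipschitz functionals which you outline but do not establish.

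That deformation step, as written, has genuine gaps. First, you require $\eta_{2}\equiv 1$ near $\gamma^{*}(B)$ (in effect on the whole set reachable in time of order $\tfrac{3}{4}\varepsilon$) while $\eta_{2}$ vanishes off the domain of $V$; but that reachable set contains points with $f<c$ lying within $\tfrac{3}{2}\varepsilon$ of $F$, and the contradiction hypothesis gives $\mathrm{dist}(0,\partial f(x))>2\varepsilon$ only on $R$, i.e.\ only where $f\ge c$, so such points may be critical and the two requirements on $\eta_{2}$ are incompatible as stated. Second, the assertion that the displacement budget ``keeps the orbit inside $R$'' cannot be the mechanism: the desired outcome is precisely that orbits leave $R$ through the floor $\{f=c\}$, and the moment they do, both the estimate $f^{0}(\cdot,V)<-2\varepsilon$ and the monotonicity of $f$ along the orbit must be re-derived from the support properties of the field, which you have not arranged. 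Third, the ``triangle-inequality chase'' does not dispose of parameters $s$ with $\mathrm{dist}(\gamma^{*}(s),F)\in(\tfrac{1}{2}\varepsilon,\tfrac{3}{4}\varepsilon]$ and $f(\gamma^{*}(s))\ge c$: they lie outside $B$, are only partially damped by $\eta_{1}$, and can be carried into $F$ without the full-rate, time-$\tfrac{3}{4}\varepsilon$ descent applying; what is needed is the pointwise ratio estimate (descent at least $2\varepsilon$ times arclength travelled wherever the field is nonzero), which forces $f<c$ for any point that must travel more than $\tfrac{1}{2}\varepsilon$ to reach $F$ --- an argument absent from your sketch. Fourth, extending $\eta_{2}V$ by zero across the boundary of the open set $\{x:\mathrm{dist}(0,\partial f(x))>2\varepsilon\}$ need not produce a locally Lipschitz field in an infinite-dimensional space, so well-posedness of the flow and its continuity in $s$ (needed to keep $\gamma_{\tau}\in\Gamma$) are not secured; and the claim that the bookkeeping ``pins down'' the constants $\tfrac{3}{2}\varepsilon$, $2\varepsilon$, $\tfrac{5}{4}\varepsilon^{2}$ is asserted, never carried out. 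The skeleton (near-optimal path plus pseudo-gradient deformation, with (a) blocking descent of the whole path and (b) protecting $\gamma_{0}(S_{0})$) is a viable alternative in the spirit of \cite{19}, but these steps are exactly where the proof currently fails; the shorter repair is to mimic the paper's proof of Theorem \ref{thm 5}: penalize with $\Psi$, apply Ekeland to $\varphi$ on $\Gamma$, and read (i)--(iii) off the resulting inequalities at a maximizing parameter.
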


Thanks to a referee for pointing out the papers \cite{Liv} - \cite{Liv2}.
In \cite{Liv2}, Livrea, R. and Marano S. A. considered more general compactness conditions: Let
$h:[0,+\infty[\rightarrow  [0,+\infty[ $ be a continuous function enjoying the following property:
\begin{equation}\label{lth}
\int_0^{+\infty}\frac{1}{1+h(\xi)}d\xi=+\infty.
\end{equation}
 They call that $f$ satisfies a weak Palais-Smale condition at the level $c\in \mathbb{R}$
 when for some $h$ as above one has:\\
 $(PS)_c^h$
\emph{Every sequence $\{x_n\}\subseteq X$ such that
\begin{equation}\label{ps}
\lim_{n\rightarrow +\infty}f(x_n)=c \ \  \text{and} \ \
\lim_{n\rightarrow +\infty}(1+h(\|x_n\|))\min_{y^\ast\in \partial f(x_n)}\|y^\ast\|=0
\end{equation}
possesses a convergent subsequence.}

A weaker form of $(PS)_c^h$ is the one below, where $U$ denotes a nonempty closed subset of $X$.
For $U: =X$ it coincides with $(PS)_c^h$.\\
$(PS)_{U,c}^h$ \emph{Every sequence $\{x_n\}\subseteq X$ such that $d(x_n,U)\rightarrow 0$
as $n\rightarrow+\infty$ and (\ref{ps}) holds true possesses a convergent subsequence.}

Given $x,z\in X$, denoted by $\mathcal{P}(x,z)$ the family of all piecewise
$C^1$ paths $p:[0,1]\rightarrow X$ such that $p(0)=x$ and $p(1)=z$. Moreover, put
$$
l_h(p):=\int_0^1\frac{\|p'(t)\|}{1+h(\|p(t)\|)}dt,\ \ p\in\mathcal{P}(x,z),
$$
as well as
\begin{equation}\label{dist}
\delta_h(x,z):=\inf \{l_h(p): p\in \mathcal{P}(x,z) \}.
\end{equation}

Let $B$ be a nonempty closed subset of $X$ and let $\mathcal{F}$
be a class of nonempty compact sets in $X$.
According to \cite{GH} Definition 1, they call that $\mathcal{F}$ is a
\emph{homotopy-stable family with extended boundary} $B$ when for every
$A\in\mathcal{F}, \eta\in C^0([0,1]\times X,X)$ such that $\eta(t,x)=x$
on $(\{0\}\times X)\cup([0,1] \times B)$ one has $\eta(\{1\}\times A) \in \mathcal{F}$.
Some meaningful situations are special cases of this notion. For instance,
if $Q$ denotes a compact set in $X$, $Q_0$  is a non-empty closed subset of $Q$,
$\gamma_0\in C^0 (Q_0,X)$,
$$\Gamma:= \{\gamma \in C^0(Q,X):\gamma|Q_0 =\gamma_0\},$$
and $\mathcal{F}:=\{\gamma(Q):\gamma\in \Gamma\}$, then $\mathcal{F}$ enjoys the above-mentioned property with
$B:= \gamma_0(Q_0)$.  In particular, it holds true when $Q$ indicates a compact topological
manifold in $X$ having a nonempty boundary $Q_0$  while $\gamma_0:=id|Q_0$.

In \cite{Liv2}, Livrea and Marano made the following assumptions:\\
\emph{($a_1$) $f : X \rightarrow \mathds{R}$ is a locally Lipschitz continuous function.\\
($a_2$) $\mathcal{F}$ denotes a homotopy-stable family with extended boundary $B$.\\
($a_3$)  There exists a nonempty closed subset $F$ of $X$ such that
$$(A \cap F)\backslash B \neq \emptyset, \ \ \forall A \in \mathcal{F} $$
and, moreover,
$$\sup_{x\in B}f(x)\leq \inf_{x\in F}f(x).$$
($a_4$) $h : [0 , +\infty[\rightarrow [0, +\infty[ $ is a continuous function fulfilling (\ref{ps}),
while $\delta_h$ indicates the metric defined (\ref{lth}).\\
Set, as usual,
$$ c:=\inf_{A\in\mathcal{F}}\max_{x\in A}f(x).$$ }

Livrea and Marano \cite{Liv}-\cite{Liv2} obtained the following theorems
\begin{theorem}\label{m1}
Let ($a_1$)-($a_4$) be satisfied. Then to every sequence $\{A_n \}\subseteq \mathcal{F}$
such that $\lim\limits_{n\rightarrow +\infty}\max\limits_{ x\in A_n}f(x)=c$
there corresponds a sequence ${x_n} \subseteq X\backslash B$ having the following properties:\\
($i_1$) $\lim\limits_{n\rightarrow +\infty} f(x_n) = c $.\\
($i_2$) $ (1 + h(\| x_n\|))f^0 (x_n;z)\geq -\epsilon_n\|z\|$   for all $n\in \mathds{N}, z\in X$,
where $\epsilon_n \rightarrow 0^+ $.\\
($i_3$) $\lim\limits_{n\rightarrow +\infty} \delta_h(x_n,F) = 0$ provided $\inf_{x\in F} f(x) = c $.\\
($i_4$) $\lim\limits_{n\rightarrow+\infty }\delta_h(x_n,A_n ) = 0 $.
\end{theorem}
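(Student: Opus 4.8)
The plan is to run Ekeland's variational principle (Theorem~\ref{thm 4}) on an auxiliary complete metric space of paths/deformations, in the spirit of Ghoussoub--Preiss, letting the generalized gradient enter only through the upper semicontinuity of $(x,z)\mapsto f^{0}(x;z)$ and Clarke's calculus. First I would record that hypothesis~(\ref{lth}) forces $(X,\delta_{h})$ to be a complete metric space: with $H(r):=\int_{0}^{r}(1+h(\xi))^{-1}\,d\xi$ one has $\delta_{h}(0,x)\ge H(\|x\|)\to+\infty$, so $\delta_{h}$--bounded sets are norm bounded, while on each norm ball $\delta_{h}$ is bi-Lipschitz equivalent to $\|\cdot\|$; hence $\delta_{h}$--Cauchy sequences converge in norm. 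I would then carry out the representative case $\mathcal{F}=\{\gamma(Q):\gamma\in\Gamma\}$, $B=\gamma_{0}(Q_{0})$ in detail, the general homotopy-stable family being treated in the same way in the space of admissible deformations of $A_{n}$ with the analogous $h$--weighted uniform metric. Here $\Gamma$ equipped with $d_{h}(\gamma,\beta):=\sup_{s\in Q}\delta_{h}(\gamma(s),\beta(s))$ is complete, $\Psi(\gamma):=\max_{s\in Q}f(\gamma(s))$ is continuous with $\Psi\ge c$ on $\Gamma$ (because $\gamma(Q)\in\mathcal{F}$); every member of $\Gamma$ contains $B$, so $c\ge\sup_{B}f$; and the given minimizing sequence is realized by $\gamma_{n}\in\Gamma$ with $\gamma_{n}(Q)=A_{n}$, $\Psi(\gamma_{n})=\max_{A_{n}}f\to c$.

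Next I would fix $n$, put $\theta_{n}:=\max\{\Psi(\gamma_{n})-c,\,1/n\}\to0$, and apply Theorem~\ref{thm 4} to $\Psi$ on $(\Gamma,d_{h})$ with $u=\gamma_{n}$, $\varepsilon=\theta_{n}$ and $\lambda=\sqrt{\theta_{n}}$, obtaining $\widetilde{\gamma}_{n}\in\Gamma$ with $c\le c_{n}':=\Psi(\widetilde{\gamma}_{n})\le\Psi(\gamma_{n})$, $d_{h}(\widetilde{\gamma}_{n},\gamma_{n})\le\sqrt{\theta_{n}}$, and $\Psi(\beta)>c_{n}'-\sqrt{\theta_{n}}\,d_{h}(\widetilde{\gamma}_{n},\beta)$ for every $\beta\ne\widetilde{\gamma}_{n}$. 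The point $x_{n}$ will be taken within $\delta_{h}$--distance $o(1)$ of the enlarged peak set $K_{n}^{\rho}:=\{\widetilde{\gamma}_{n}(s):f(\widetilde{\gamma}_{n}(s))\ge c_{n}'-\rho_{n}\}$ with $\rho_{n}\downarrow0$ fixed later; then ($i_{1}$) follows from $c_{n}'\to c$, and ($i_{4}$) from $\delta_{h}(x_{n},A_{n})\le\delta_{h}(x_{n},\widetilde{\gamma}_{n}(Q))+d_{h}(\widetilde{\gamma}_{n},\gamma_{n})\to0$.

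The decisive step --- and the one I expect to be the main obstacle --- is a deformation lemma: for a suitable constant $C>1$, $K_{n}^{\rho}$ contains a point $x_{n}$ with $(1+h(\|x_{n}\|))f^{0}(x_{n};z)\ge-C\sqrt{\theta_{n}}\,\|z\|$ for all $z\in X$, which by Clarke's duality $f^{0}(x;z)=\max_{y^{\ast}\in\partial f(x)}\langle y^{\ast},z\rangle$ is the same as $(1+h(\|x_{n}\|))\,\mathrm{dist}(0,\partial f(x_{n}))\le C\sqrt{\theta_{n}}$, i.e.\ ($i_{2}$) with $\epsilon_{n}:=C\sqrt{\theta_{n}}\to0^{+}$. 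One argues by contradiction: if every $w\in K_{n}^{\rho}$ admitted a unit vector $z_{w}$ with $(1+h(\|w\|))f^{0}(w;z_{w})<-C\sqrt{\theta_{n}}$, then upper semicontinuity of $f^{0}$, continuity of $h$, compactness of $K_{n}^{\rho}$, and a locally Lipschitz partition of unity yield a locally Lipschitz field $V$ with $\|V\|\le1$, supported in a neighbourhood of $K_{n}^{\rho}$ whose closure misses $B$, and $(1+h(\|\cdot\|))f^{0}(\cdot;V)<-C\sqrt{\theta_{n}}$ there; flowing along $(1+h(\|\cdot\|))V$ for small time $s>0$ gives an admissible deformation $\eta^{s}$ fixing $B$, so $\beta:=\eta^{s}(1,\cdot)\circ\widetilde{\gamma}_{n}\in\Gamma$, while integrating the Clarke chain-rule inequality along the flow (and using that $\eta^{s}$ does not raise $f$ off the support of $V$) gives $\Psi(\beta)\le c_{n}'-2\sqrt{\theta_{n}}\,s$ together with $d_{h}(\widetilde{\gamma}_{n},\beta)\le s$ --- contradicting the Ekeland inequality once $s$ is small. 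Two points need genuine care: (a) keeping the support of $V$ off $B$: since $\sup_{B}f\le\inf_{F}f$ and $c\ge\sup_{B}f$, whenever $c>\sup_{B}f$ one has $K_{n}^{\rho}\cap B=\emptyset$ for $n$ large, while the degenerate subcase $c=\sup_{B}f$ (in which $K_{n}$ may touch $B$, no deformation lowers $\Psi$ below $c_{n}'=c$, and one checks that then $\inf_{F}f=c$ and the wanted point lies in $F\setminus B$ at level $c$) must be disposed of by a separate localized argument; (b) the weight $h$, which must be carried through the pseudo-gradient flow so that displacements are measured in $\delta_{h}$, not in the norm.

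Finally, for the ($i_{3}$) refinement (assuming $\inf_{F}f=c$) I would, instead of $\Psi$, apply Ekeland to the perturbed functional $\Psi_{\kappa_{n}}(\gamma):=\max_{s\in Q}(f(\gamma(s))+\kappa_{n}\min\{\delta_{h}(\gamma(s),F),1\})$, with $\kappa_{n}\downarrow0$ chosen slowly relative to $\sqrt{\theta_{n}}$: since $\widetilde{\gamma}_{n}(Q)\in\mathcal{F}$ forces $\widetilde{\gamma}_{n}(Q)\cap(F\setminus B)\ne\emptyset$ by ($a_{3}$) and $f\ge\inf_{F}f=c$ there, the corresponding peak set is pinned to within $\delta_{h}$--distance $o(1)$ of $F$, and the deformation argument of the previous paragraph, localized near that set, outputs $x_{n}$ with $\delta_{h}(x_{n},F)\to0$ on top of ($i_{1}$), ($i_{2}$), ($i_{4}$). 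Collecting ($i_{1}$)--($i_{4}$) for the single sequence $\{x_{n}\}$ completes the argument, which is the locally Lipschitz, $\delta_{h}$--weighted analogue of the Ghoussoub--Preiss scheme.
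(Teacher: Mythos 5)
First, a structural point: the paper itself offers no proof of Theorem \ref{m1} --- it is quoted verbatim from Livrea--Marano \cite{Liv}, \cite{Liv2} as background --- so there is no in-paper argument to compare against line by line. Your overall scheme (Ekeland's variational principle on the complete metric space of admissible paths with the $h$-weighted uniform metric, then a partition-of-unity vector field whose effect contradicts the Ekeland inequality) is the right family of argument; it is the nonsmooth Ghoussoub--Preiss scheme that Livrea--Marano follow and that this paper uses for its own Theorem \ref{thm 5}, where, incidentally, the flow is avoided altogether by the simpler linear perturbation $\hat{f}+hv$ built from a Tietze extension.

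However, there are two genuine gaps. (1) The case the theorem is really about is precisely the one you defer: when $\inf_{x\in F}f(x)=c$ (in particular when $c=\sup_{B}f$), the peak set of $\widetilde{\gamma}_n$ may touch or accumulate at the extended boundary $B$. Your field $V$ must vanish near $B$ for the flow to be an admissible homotopy, so it cannot produce any decrease at peak points on or near $B$, and the contradiction with the Ekeland inequality evaporates exactly where the hypotheses $(A\cap F)\setminus B\neq\emptyset$ and $\sup_{B}f\leq\inf_{F}f$ are supposed to do their work; waving at ``a separate localized argument'' leaves out the heart of the relaxed-boundary min-max principle. The standard repair is to build the dual set $F$ into the functional from the outset (not only for $(i_3)$), so that all near-maximum points of the perturbed functional are forced to within $o(1)$ of $F$ at level $\geq c$, and $(a_3)$ keeps a usable part of them off $B$. (2) Your perturbation for $(i_3)$ has the wrong sign: adding $\kappa_n\min\{\delta_h(\gamma(s),F),1\}$ rewards distance from $F$, so the peak set of $\Psi_{\kappa_n}$ is not pinned near $F$ at all --- a second peak of $f$ at level $c$ far from $F$ receives value about $c+\kappa_n$, beating the value $\geq c$ guaranteed at the $F$-crossing. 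One needs a bonus near $F$, e.g. $\max\{0,\varepsilon^{2}-\varepsilon\,\delta_h(x,F)\}$, exactly as in (\ref{eq11}) in the paper's proof of Theorem \ref{thm 5}; this same correction is what makes the repair of point (1) work. The remaining flow technicalities (confinement of trajectories to the region where the decrease estimate holds, the interplay of $s$, $\rho_n$ and $\sqrt{\theta_n}$, completeness of $(\Gamma,d_h)$) are standard and fixable as you indicate.
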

\begin{theorem}\label{m2}
 Let ($a_1$) - ($a_4$) be satisfied. Suppose that either $(PS)_c^h$ holds or
$F$ is bounded and $(PS)_{F,c}^h$ holds, according to whether
$\inf\limits_{x\in F} f(x)< c$ or $\inf\limits_{x\in F }f(x) = c$.
Then $K_c(f)=\{x\in X: f(x)=c, \text{ x is a critical point of f } \}\neq \emptyset$.
If, moreover, $\inf\limits_{x\in F}f(x)=c$, then $K_c(f)\cap F\neq \emptyset $.
\end{theorem}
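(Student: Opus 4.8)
The plan is to derive Theorem~\ref{m2} from the minimax principle already recorded in Theorem~\ref{m1}, treating the two compactness hypotheses separately. First I would fix a minimizing sequence $\{A_n\}\subseteq\mathcal{F}$, i.e. $\max_{x\in A_n}f(x)\to c$, which exists by the very definition of $c$; choosing for each $A$ a point of $(A\cap F)\setminus B$ and using $(a_3)$ also gives $c\geq\inf_{x\in F}f(x)$, so the two alternatives $\inf_F f<c$ and $\inf_F f=c$ are exhaustive. Applying Theorem~\ref{m1} to $\{A_n\}$ produces $\{x_n\}\subseteq X\setminus B$ satisfying $(i_1)$--$(i_4)$. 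The next step is to reinterpret the directional estimate $(i_2)$ in terms of the Clarke subdifferential. Writing $t_n:=\epsilon_n/(1+h(\|x_n\|))$, condition $(i_2)$ reads $f^0(x_n;z)\geq -t_n\|z\|$ for every $z\in X$; since $f^0(x_n;\cdot)$ is exactly the support function of the nonempty convex $w^*$-compact set $\partial f(x_n)$, i.e. $f^0(x_n;z)=\max_{y^*\in\partial f(x_n)}\langle y^*,z\rangle$, adding the support function $t_n\|z\|$ of the ball $t_nB_{X^*}$ and applying the separation theorem shows this is equivalent to $0\in\partial f(x_n)+t_nB_{X^*}$, hence to $\min_{y^*\in\partial f(x_n)}\|y^*\|\leq t_n$. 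Multiplying by $1+h(\|x_n\|)$ yields $(1+h(\|x_n\|))\min_{y^*\in\partial f(x_n)}\|y^*\|\leq\epsilon_n\to 0$, so together with $(i_1)$ the sequence $\{x_n\}$ satisfies (\ref{ps}).

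If $\inf_F f<c$, I would simply invoke $(PS)_c^h$: since $\{x_n\}$ satisfies (\ref{ps}), it has a subsequence $x_{n_k}\to\bar x$. Continuity of $f$ gives $f(\bar x)=c$, while, choosing $y_k^*\in\partial f(x_{n_k})$ with $\|y_k^*\|\to 0$ (possible as $1+h\geq 1$), the norm-to-$w^*$ upper semicontinuity of the Clarke subdifferential---equivalently, its closed graph---forces $0\in\partial f(\bar x)$. Thus $\bar x\in K_c(f)$ and $K_c(f)\neq\emptyset$.

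The substantive case is $\inf_F f=c$, where $F$ is assumed bounded and only $(PS)_{F,c}^h$ is available. Here Theorem~\ref{m1} additionally supplies $(i_3)$, namely $\delta_h(x_n,F)\to 0$; but $(PS)_{F,c}^h$ is phrased with the \emph{norm} distance $d(x_n,F)\to 0$, so the key obstacle is to upgrade geodesic closeness to norm closeness. I would handle this through the auxiliary function $\rho(\xi):=\int_0^\xi\frac{ds}{1+h(s)}$, which is increasing with $\rho(+\infty)=+\infty$ by the divergence hypothesis (\ref{lth}). Along any path $p$ one has $|\rho(\|p(1)\|)-\rho(\|p(0)\|)|\leq l_h(p)$, so picking $y_n\in F$ with $\delta_h(x_n,y_n)\leq\delta_h(x_n,F)+\frac{1}{n}\to 0$ and using boundedness of $F$ gives first that $\rho(\|x_n\|)$ is bounded, hence $\{x_n\}$ lies in a fixed ball $B_{R'}$, and second that every near-minimizing path for $\delta_h(x_n,y_n)$ stays in a fixed ball $B_{R''}$. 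On $B_{R''}$ the factor $1+h(\|p\|)$ is bounded above by some $1+M$, whence $l_h(p)\geq\frac{1}{1+M}\|x_n-y_n\|$ and therefore $\|x_n-y_n\|\leq(1+M)\delta_h(x_n,y_n)\to 0$. This yields $d(x_n,F)\to 0$.

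With $d(x_n,F)\to 0$ and (\ref{ps}) in hand, $(PS)_{F,c}^h$ furnishes a subsequence $x_{n_k}\to\bar x$; as before $f(\bar x)=c$ and $0\in\partial f(\bar x)$, so $\bar x\in K_c(f)$. Finally, since $F$ is closed and $d(x_{n_k},F)\to 0$, passing to the limit gives $d(\bar x,F)=0$, i.e. $\bar x\in F$, so $K_c(f)\cap F\neq\emptyset$, which is the ``moreover'' assertion. I expect the only genuine difficulty to be the third paragraph---the passage from $\delta_h(x_n,F)\to 0$ to $d(x_n,F)\to 0$---since it is precisely there that boundedness of $F$ and the divergence condition (\ref{lth}) are indispensable; the remaining steps are routine once $(i_2)$ has been dualized into subdifferential form.
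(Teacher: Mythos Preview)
The paper does not actually prove Theorem~\ref{m2}; it is quoted from Livrea--Marano \cite{Liv2} as background, together with Theorems~\ref{m1} and~\ref{m3}, so there is no ``paper's own proof'' to compare against. Your proposal is therefore a reconstruction of a result that the present paper merely cites.

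That said, your argument is sound and is essentially the natural route: use the sequence furnished by Theorem~\ref{m1}, dualize condition $(i_2)$ via the support-function characterisation $f^0(x_n;\cdot)=\sigma_{\partial f(x_n)}(\cdot)$ to obtain $0\in\partial f(x_n)+t_nB_{X^*}$ and hence $(1+h(\|x_n\|))\min_{y^*\in\partial f(x_n)}\|y^*\|\to 0$; then invoke the appropriate Palais--Smale hypothesis and pass to the limit using continuity of $f$ and the closed-graph property of $\partial f$. Your handling of the delicate point---upgrading $\delta_h(x_n,F)\to 0$ to $d(x_n,F)\to 0$ when $\inf_Ff=c$---via the primitive $\rho(\xi)=\int_0^\xi(1+h)^{-1}$ is correct and is precisely where both the boundedness of $F$ and the divergence condition~(\ref{lth}) enter; this mirrors the equivalence-of-metrics step that the present paper itself uses (in the special case $h(\xi)=\xi$) when deducing its Theorem~\ref{thm 6} from Theorem~\ref{thm 5}. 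One minor caveat: the minimum $\min_{y^*\in\partial f(x_n)}\|y^*\|$ is attained because $\partial f(x_n)$ is $w^*$-compact and the norm is $w^*$-lower semicontinuous, so your selection of $y_k^*$ with $\|y_k^*\|\to 0$ is justified.
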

\begin{theorem}\label{m3}
Let ($a_1$) and ($a_4$) be satisfied. Suppose that:\\
($a_5$) There exists a closed subset $F$ of $X$ complying with
$(\gamma(Q)\cap F)\backslash \gamma_0(Q_0)\neq\emptyset $
for all $ \gamma\in\Gamma$ and, moreover,
$\sup\limits_{x\in Q_0} f(\gamma_0(x))\leq \inf\limits_{x\in F}f(x)$.\\
($a_6$)  Setting $c:=\inf\limits_{\gamma\in\Gamma}\max\limits_{x\in Q} f(\gamma(x))$,
either $(PS)_c^h$ holds or $F$ is  bounded and $(PS)_{F,c}^h$ holds, according to whether
$\inf\limits_{x\in F} f(x) < c$ or $\inf\limits_{x\in F} f(x) = c$.
Then the conclusion of Theorem (\ref{m2}) is true.
\end{theorem}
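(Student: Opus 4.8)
The plan is to deduce Theorem \ref{m3} from Theorem \ref{m2} by recognizing the "paths with prescribed boundary values" construction as a concrete instance of an abstract homotopy-stable family. Concretely, I set $B := \gamma_0(Q_0)$ and $\mathcal{F} := \{\gamma(Q) : \gamma \in \Gamma\}$, where $\Gamma = \{\gamma\in C^0(Q,X):\gamma|Q_0 = \gamma_0\}$, and I check that $(a_1)$--$(a_4)$ of Theorem \ref{m2} hold for this choice and that the two minimax levels coincide. First I would record the elementary topological facts: $Q_0$ is a closed subset of the compact space $Q$, hence compact, so $B = \gamma_0(Q_0)$ is a nonempty compact, in particular closed, subset of $X$, and likewise each $A = \gamma(Q)\in\mathcal{F}$ is compact; since $X$ is an absolute retract, $\gamma_0$ admits a continuous extension to $Q$, so $\Gamma\neq\emptyset$ and $\mathcal{F}$ is a nonempty class of nonempty compact sets. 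Thus the ambient requirements of Theorem \ref{m2} are met, and $(a_1)$, $(a_4)$ are assumed outright.

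The one step requiring an actual argument is $(a_2)$: that $\mathcal{F}$ is a homotopy-stable family with extended boundary $B$. Let $A = \gamma(Q)\in\mathcal{F}$ and let $\eta\in C^0([0,1]\times X,X)$ satisfy $\eta(t,x)=x$ on $(\{0\}\times X)\cup([0,1]\times B)$. Define $\tilde\gamma:Q\to X$ by $\tilde\gamma(x):=\eta(1,\gamma(x))$; this is continuous. For $x\in Q_0$ one has $\gamma(x)=\gamma_0(x)\in B$, hence $\tilde\gamma(x)=\eta(1,\gamma_0(x))=\gamma_0(x)$, so $\tilde\gamma|Q_0=\gamma_0$, i.e. $\tilde\gamma\in\Gamma$. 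Therefore $\eta(\{1\}\times A)=\tilde\gamma(Q)\in\mathcal{F}$, which is exactly the stability property. I regard this as the main obstacle only in the formal sense that it is the sole place where the structure of $\Gamma$ is used; the substance of it is just that $\eta$ fixes $B$ pointwise.

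Next I would match $(a_3)$ to $(a_5)$. Since $\mathcal{F}=\{\gamma(Q):\gamma\in\Gamma\}$ and $B=\gamma_0(Q_0)$, the condition $(A\cap F)\setminus B\neq\emptyset$ for all $A\in\mathcal{F}$ in $(a_3)$ is word-for-word the first half of $(a_5)$; and because $f(B)=f(\gamma_0(Q_0))=\{f(\gamma_0(x)):x\in Q_0\}$ we get $\sup_{x\in B}f(x)=\sup_{x\in Q_0}f(\gamma_0(x))\le\inf_{x\in F}f(x)$, which is the second half of $(a_3)$. Moreover the minimax values agree:
$$\inf_{A\in\mathcal{F}}\max_{x\in A}f(x)=\inf_{\gamma\in\Gamma}\max_{y\in\gamma(Q)}f(y)=\inf_{\gamma\in\Gamma}\max_{x\in Q}f(\gamma(x))=c,$$
so the constant $c$ appearing in Theorem \ref{m2} is precisely the $c$ of $(a_6)$.

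Finally, $(a_6)$ furnishes exactly the compactness dichotomy demanded by Theorem \ref{m2}: $(PS)_c^h$ when $\inf_{x\in F}f(x)<c$, or $F$ bounded together with $(PS)_{F,c}^h$ when $\inf_{x\in F}f(x)=c$. Having verified $(a_1)$--$(a_4)$ and the hypotheses on $c$, I would invoke Theorem \ref{m2} to conclude $K_c(f)\neq\emptyset$, with $K_c(f)\cap F\neq\emptyset$ in the case $\inf_{x\in F}f(x)=c$; this is the assertion of Theorem \ref{m3}. The remaining work beyond the homotopy-stability check is purely bookkeeping: the identifications $(a_3)\leftrightarrow(a_5)$, the translation of $(a_6)$ into the dichotomy of Theorem \ref{m2}, and the equality of the two expressions for $c$.
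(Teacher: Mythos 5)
Your proof is correct and is exactly the intended derivation: the paper states Theorem \ref{m3} without its own proof (it is quoted from Livrea--Marano), but the paragraph introducing homotopy-stable families already records that $\mathcal{F}=\{\gamma(Q):\gamma\in\Gamma\}$ is homotopy-stable with extended boundary $B=\gamma_0(Q_0)$, which is precisely your verification of $(a_2)$. The remaining identifications $(a_3)\leftrightarrow(a_5)$, the equality of the two minimax values, and the transfer of the compactness dichotomy $(a_6)$ are handled correctly, so Theorem \ref{m2} applies and yields the conclusion.
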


\begin{remark}
Comparing our Theorem \ref{thm 5} with Livrea - Marano's
Theorem \ref{m1}, we found that although the conditions of
Livrea - Marano's theorems are more general, but their results are weaker,
in fact, in Theorem \ref{m1} they can obtain
$\lim\limits_{n\rightarrow+\infty}\delta_h(x_n,F)=0$
under the assumptions $\inf\limits_{x\in F}f(x)=c$,
without this condition, they can't get\\
$\lim\limits_{n\rightarrow +\infty}\delta_h(x_n,F)=0$,
generally, they can only obtain ($i_4$)
$\lim\limits_{n\rightarrow+\infty}\delta_h(x_n,A_n)=0$.
In Theorem \ref{m2} and Theorem \ref{m3}, although the
conditions ($a_2$)-($a_4$) are more general than our
Theorems \ref{thm 6}-\ref{thm 8}, but they suppose two cases that
"either $(PS)_c^h$ holds or $F$ is bounded and  $(PS)_{F,c}^h$ holds,
according to whether
$\inf\limits_{x\in F}f(x)<c$ or $\inf\limits_{x\in F}f(x)=c$",
which are more difficulty to apply, in fact, in many applied examples,
it's very difficult to prove
$\inf\limits_{x\in F}f(x)<c$ or $\inf\limits_{x\in F}f(x)=c$.
\end{remark}

\begin{remark}
It should be poited out that the conditions $(CPS)_{F,c}$ and
$(CPS)_{F,c,\delta}$ generalize
the Cerami condition (with a closed subset $F$) in the smooth situation.
For more on Cerami sequence
in the smooth case, we refer to Schechter \cite{20} and Stuart \cite{23},
Stuart's paper proved the
smooth case of our Theorem \ref{thm 5} and Theorem \ref{thm 6}.
The conclusions (i)-(iii) of Goga's Theorem \ref{thm 3} and the condition $(PS)_c$ in
 Ribarska- Tsachev- Krastanov \cite{19} are different from
the conditions $(CPS)_{F,c}$ and $(CPS)_{F,c;\delta}$
stated here. Our results Theorem \ref{thm 5} and Theorem \ref{thm 6} are stronger since
 $(1+\|x_n\|)\min\limits_{y\ast\in \partial \Phi(x_n)}\|y\ast\|\rightarrow 0$
implies (iii) of the Theorem \ref{thm 3}.
 We would also like to note the assumptions in Theorem \ref{thm 7} and Theorem \ref{thm 8},
and our $(CPS)_{F,\gamma;\delta}$ and $(CPS)_{F,\gamma}$
conditions are weaker than those used \cite{10} and \cite{19};
therefore, the arguments in our paper differ from \cite{10} and \cite{19}
since they could utilize the Borwein-Preiss
variational principle or a deformation lemma, whereas we use
the classical Ekeland's variational principle.
\end{remark}

\begin{remark}
We should note the difference between our Generalized Mountain Pass Lemma (GMPL) and the following
theorem of Struwe( \cite{22}): Suppose $M$ is a closed convex subset of a Banach space
$V$ and $E\in C^1(V)$ satisfies $(P.-S.)_M$ on $M$.
 Any sequence $\{u_n\}\subset M$ such that $|E(u_n)|\leq c$ uniformly, while
 $g(u_m)=\sup\limits_{\mathop{{v\in M} }\limits_{\|u_m-v\|<1}}\langle u_m-v,DE(u_m)
\rangle\rightarrow0$
$(m\rightarrow\infty)$,
is relatively compact.
Suppose further that $E$ admits two distinct relative minima $u_1$, $u_2$ in $M$.
Then either $E(u_1)=E(u_2)=\beta$ and $u_1$, $u_2$
can be connected in any neighborhood of the set of relative minima $u\in M$ of
$E$ with $E(u)=\beta$,
or there exists a critical point $\bar{u}$ of $E$ in $M$ which is not a relative minimizer of $E$.

In Struwe's Theorem, $M$ is a closed convex subset of a Banach space, but in our
GMPL we don't assume any convexity.
We also don't assume that $E: M\rightarrow \mathds{R}$ possesses an
extension $E\in C^1(V;\mathbb{R})$ to $V$,
but only that the functional is locally Lipschitz. Struwe's Theorem assumes
the existence of two local minimizers,
but we only require the existence of two valleys which may not be local minimizers. In these ways,
we see the premise in our GMPL is weaker than the corresponding conditions in Struwe's Theorem.
\end{remark}

\begin{remark}
The classical Mountain Pass Lemma and its many generalizations
are primarily concerned with saddle points, but we should note the saddle
points encountered in these various Mountain Pass Lemmas are different from
those in the Von Neumann Minimax Theorem ( \cite{25}). The Minimax Theorem
of Neumann is essentially related with convexity and concavity, whereas the
Mountain Pass Lemma is not related with convexity and concavity which is
related with ($PS$) compactness condition and two valleys for functional. It
seems interesting to use Ekeland’s variational principle to prove von Neumann Minimax Theorem.
\end{remark}

\section{The Proofs of Theorems \ref{thm 5}-\ref{thm 8}}

\begin{proof}
Since the main ingredient is still Ekeland's variational principle, we utilize some notations and
ideas from \cite{8} and \cite{9}, but must deviate in a few key steps.
Since the closed set $F_{\gamma}:=\Phi_{\gamma}\bigcap F$ separates $z_0$ and $z_1$, we can write
$
X\setminus F_{\gamma}:=\Omega_0\bigcup\Omega_1
$
where $z_0\in \Omega_0$, $z_1\in \Omega_1$ for open sets $\Omega_0$
and $\Omega_1$ with $\Omega_0 \cap \Omega_1=\emptyset$.

Choose $\varepsilon$ which satisfies
\begin{equation}\label{eq5}
0<\varepsilon<\frac{1}{2}\min\{1,dist_{\delta}(z_0,F_{\gamma}),dist_{\delta}(z_1,F_{\gamma})\}.
\end{equation}
By the definition of $\Gamma$, we can find $c\in\Gamma$ such that
\begin{equation}\label{eq6}
\max_{0\leq t\leq1}\Phi(c(t))<\gamma+\frac{\varepsilon^2}{4}.
\end{equation}
If we define $t_0$ and $t_1$ by
\begin{eqnarray*}
t_0&:=&\sup\{t\in[0,1]:c(t)\in\Omega_0,dist_{\delta}(c(t),F_{\gamma})\geq\varepsilon\},\\
t_1&:=&\inf\{t\in[t_0,1]:c(t)\in\Omega_1,dist_{\delta}(c(t),F_{\gamma})\geq\varepsilon\},
\end{eqnarray*}
then since $c(0)=z_0$, we have by (\ref{eq5}) and the continuity of $c$ that
$t_0>0$; moreover, by $c(t_0)\in\bar{\Omega}_{0}$ and
$dist_{\delta}(c(t_0),F_{\gamma})\geq\varepsilon$, we have
$c(t_0)\in\Omega_0$. Then $\Omega_0\cap\Omega_1=\emptyset$ implies
$t_1>t_0$. Again by (\ref{eq5}) and the continuity of $c$ we have $t_1<1$. So altogether
$0<t_0<t_1<1$.
Let
\begin{equation}\label{eq7}
\Gamma(t_0,t_1):=\{f\in C^0([t_0,t_1],X):f(t_0)=c(t_0),f(t_1)=c(t_1)\},
\end{equation}
and consider the following distance in $\Gamma(t_0,t_1)$:
\begin{equation}\label{eq8}
\rho(f_1,f_2):=\max_{t_0\leq t\leq t_1}\delta(f_1(t),f_2(t)),
\end{equation}
where
\begin{equation}\label{eq9}
\delta(x_1,x_2):=\inf\{l(c) : c\in C^1([0,1],X),c(0)=x_1,c(1)=x_2\}
\end{equation}
with
\begin{equation}\label{eq10}
l(c):=\int^1_0\frac{\|\dot{c}(t)\|}{1+\|c(t)\|}dt.
\end{equation}
For $x\in X$, we define the function
\begin{equation}\label{eq11}
\Psi(x):=\max\{0,\varepsilon^2-\varepsilon dist_{\delta}(x,F_{\gamma})\}.
\end{equation}
 A map $\varphi: \Gamma(t_0,t_1)\rightarrow \mathds{R}$ is defined by
 \begin{equation}\label{eq12}
\varphi(f):=\max_{t_0\leq t\leq t_1}\{\Phi(f(t))+\Psi(f(t))\}.
\end{equation}
Since $f(t_0)=c(t_0)\in\Omega_0, f(t_1)=c(t_1)\in\Omega_1$,
there exists $t_f\in (t_0,t_1)$ satisfying $f(t_f)\in \partial\Omega_0\subset F_{\gamma}$; therefore,
\begin{equation}\label{eq14}
dist_{\delta}(f(t_f),F_{\gamma})=0,
\end{equation}
and for any $f\in\Gamma(t_0,t_1)$, we have
\begin{equation}\label{eq15}
\varphi(f)\geq\Phi(f(t_f))+\Psi(f(t_f))\geq\gamma+\varepsilon^2.
\end{equation}
On the other hand, if we denote $\hat{c}=c|_{[t_0,t_1]}$, then
\begin{equation}\label{eq16}
\varphi(\hat{c})\leq \max_{0\leq t\leq 1}\{\Phi(c(t))+\Psi(c(t))\}\leq\gamma+\frac{5}{4}\varepsilon^2.
\end{equation}
Notice that $\Gamma(t_0,t_1)$ is a complete metric space with respect to the metric $\rho$
introduced in (\ref{eq8}) \cite{7},\cite{8}.
Since $\Phi$ and $\Psi$ are lower semi-continuous, so is $\varphi$. Now (\ref{eq15}) implies $\varphi$
has a lower bound, and by (\ref{eq15}) and (\ref{eq16}) we have
\begin{equation}\label{eq17}
\varphi(\hat{c})\leq\inf \varphi+\frac{\varepsilon^2}{4}.
\end{equation}
In Ekeland's variational principle, we use
$\frac{\varepsilon^2}{4}$ in place of $\varepsilon$, and take
$\lambda=\frac{\varepsilon}{2}$, then there exists $\hat{f}\in
\Gamma(t_0,t_1)$ such that
\begin{equation}\label{eq18}
\begin{array}{l}
\varphi(\hat{f})\leq\varphi(\hat{c}),\ \
\rho(\hat{f},\hat{c})\leq\frac{\varepsilon}{2},\ \
\varphi(f)\geq\varphi(\hat{f})-\frac{\varepsilon}{2}\rho(f,\hat{f}), \ \ \forall f\in \Gamma(t_0,t_1).
\end{array}
\end{equation}
Let
\begin{equation}\label{eq19}
M:=\{t\in [t_0,t_1]:\Phi(\hat{f}(t))+\Psi(\hat{f}(t))=\varphi(\hat{f})\}.
\end{equation}

Note that $M$ is a non- empty set that is compact, since $\Phi$ and $\Psi$ are lower- semicontinuous.
We next show that $t_0,t_1\notin M$.

By the definitions of $t_0$ and $t_1$, we have
\begin{equation}\label{eq20}
dist_{\delta}(c(t_i),F_{\gamma})\geq \varepsilon, \ \ i=0,1,
\end{equation}
so by (\ref{eq11})we have $\Psi(\hat{c}(t_i))=0$. By (\ref{eq6}) and (\ref{eq15}) and (\ref{eq18} ) we have
\begin{equation}\label{eq21}
\Phi(\hat{f}(t_i))+\Psi(\hat{f}(t_i))\leq\Phi(\hat{c}(t_i))+\Psi(\hat{c}(t_i))\leq \gamma
+\frac{\varepsilon^2}{4}<\varphi(\hat{f}),i=0,1,
\end{equation}
which implies $t_0,t_1\notin M$.
\vspace{0.2cm}

\textbf{\underline{Claim}}: There exists $t\in M $ such that
\begin{equation}\label{eq22}
\min_{x^*\in \partial\Phi(\hat{f}(t))}\|x^*\|(1+\|\hat{f}(t)\|)\leq \frac{3\varepsilon}{2}.
\end{equation}

\begin{proof} If not, for any $t\in M$,
\begin{equation}\label{eq23}
\min_{x^*\in \partial\Phi(\hat{f}(t))}\|x^*\|(1+\|\hat{f}(t)\|)>\frac{3\varepsilon}{2}.
\end{equation}
It is well known that $\|x^*\|=\sup_{v\neq 0}\frac{<x^*,v>}{\|v\|}$ where
\begin{equation}\label{eq24}
x^*\in \partial\Phi(\hat{f}(t))=\{x^*\in X^*:<x^*,v>\leq\Phi^0(\hat{f}(t),v),\forall v\in X\},
\end{equation}
where the following definition
$$\Phi^0(x,v)=\limsup_{\mathop{{w\rightarrow x} }\limits_{t\downarrow 0}}\frac{\Phi(w+tv)-\Phi(w)}{t} $$
denotes the generalized directional derivative of $\Phi$ at the point $x$ along the direction $v$.

Let $u\in \partial\Phi(\hat{f})$ satisfy $\|u\|=\min\limits_{x^*\in \partial\Phi(\hat{f})}\|x^*\|$.
By properties of Clarke's generalized gradient, we know that $\langle u,v\rangle \leq \Phi^0(\hat{f},v) $
for any $v\in X$ (from the reference \cite{5}).
Since $\frac{1}{1+\|\hat{f}\|}\frac{3\varepsilon}{2}<\min\limits_{x^*\in \partial\Phi(\hat{f})}\|x^*\|$,
we see that $\frac{1}{1+\|\hat{f}\|}\frac{3\varepsilon}{2}<\|u\|$,  and so there must be a $v\in X$
such that $\|v\|=1$ and  $\frac{1}{1+\|\hat{f}\|}\frac{3\varepsilon}{2}<\langle u, v\rangle$.
Setting $w:=(1+\|\hat{f}\|)v$, we see $\frac{3\varepsilon}{2}< \langle u,w\rangle$. Moreover,
since $\langle u,v\rangle\leq \Phi^0(\hat{f},v)$ for any $v\in X$,
we will have $\frac{3\varepsilon}{2}<\Phi^0(\hat{f},w)$.
 Finally, let $y := -w$. Then $\|y\|=\|w\|=1+\|\hat{f}\|$, and by properties of Clarke's
generalized gradient (see (4) on page 104 of reference \cite{5}) we have
$$\frac{3\varepsilon}{2}<\Phi^0(\hat{f},w)=-\Phi^0(\hat{f},-w)=-\Phi^0(\hat{f},y).$$
Notice that the left side of the above inequality is equal to
$\Phi^0(\hat{f}(t),h)$, so we get $\Phi^0(\hat{f}(t),u(t))<-\frac{3\varepsilon}{2}$ for $u(t)=h$ and $\|u\|=1+\|\hat{f}(t)\|$. Let
$N(t):=\{s\in M:\Phi^0(\hat{f}(s),u(t))<-\frac{3}{2}\varepsilon\}.$
Since $x\mapsto\Phi^0(x,v)$ is upper semicontinuous for any given $v$, $N(t)$ is an open subset of $M$ and since $t\in N(t)$,
$M$ can be covered by $\{N(t):t\in M\}.$
Since $M$ is compact, we can pick a finite open sub-cover for $M$, $\left\{ N(t_k):0\leq k\leq K \right\}$.
Then for the partition of unity associated with this cover on $M$, there are continuous
functions $\xi_k(t):0\leq\xi_k(t)\leq 1$ for $0\leq k\leq K$ with $\sum_0^K\xi_k(t)=1.$

Let $v(t):=\sum_{k=0}^K\xi_k(t)u(t)$ and from the sub-additivity and positive homogenity of $\Phi^{0}$
in its second argument, we observe the continuous map $v: M\rightarrow X$ satisfies
\begin{equation}\label{eq27}
\Phi^0(\hat{f}(t),v(t))<-\frac{3\varepsilon}{2}, \ \
\|v(t)\|\leq\sum_{k=0}^K\xi_k(t)(1+\|\hat{f}(t)\|=(1+\|\hat{f}(t)\|).
\end{equation}
Since $M\subset[t_0,t_1]$ and $M$ is a nonempty compact set with
$t_0,t_1\notin M$, so by Tietze extension theorem, we can extend $v$
to a continuous function defined on $[t_0,t_1]$ (which we still
denote by $v$) which satisfies $v(t_0)=v(t_1)=0$ and
\begin{equation}\label{eq28}
 \|v(t)\|\leq(1+\|\hat{f}(t)\|), \forall t\in[t_0,t_1].
\end{equation}
Since $v(t_0)=v(t_1)=0$, $\forall h>0$, $\hat{f}+hv\in\Gamma (t_0,t_1)$; hence by ( \ref{eq18}), we have
\begin{equation}\label{eq29}
 \varphi(\hat{f}+hv)\geq \varphi(\hat{f})-\frac{\varepsilon}{2}\rho(\hat{f}+hv,\hat{f}).
\end{equation}
 By (\ref{eq12}), we can choose $t_h\in[t_0,t_1]$ such that $\varphi (f)=\Phi(f(t_n))+\Psi(f(t_n))$ and then:
\begin{equation}\label{eq30}
 \varphi(\hat{f}+hv)=(\Phi+\Psi)(\hat{f}(t_h)+hv(t_h)).
\end{equation}
Notice that here $t_h$ is defined for each $h>0$. By the definition
of $\varphi$, we know that for any $h>0$, there holds $\varphi(\hat{f})\geq (\Phi+\Psi)(\hat{f}(t_h))\label{eq31}$.
 Combining with (\ref{eq29}) and (\ref{eq30}), we have for every $h > 0$:
\begin{equation}\label{eq32}
(\Phi+\Psi)(\hat{f}(t_h)+hv(t_h))\geq(\Phi+\Psi)(\hat{f}(t_h))-\frac{\varepsilon}{2}\rho(\hat{f}+hv,\hat{f});
\end{equation}
that is,
\begin{equation}\label{eq33}
\Phi(\hat{f}(t_h)+hv(t_h))-\Phi(\hat{f}(t_h))\geq-\Psi(\hat{f}(t_h)+hv(t_h))+\Psi(\hat{f}(t_h))-
\frac{\varepsilon}{2}\rho(\hat{f}+hv,\hat{f})).
\end{equation}
If we recall the definition of $\Psi$, then $\Psi$ is $\varepsilon-$Lipschitz with respect to the metric $\rho$,
and so the above inequality implies
\begin{equation}\label{eq34}
\Phi(\hat{f}(t_h)+hv(t_h))-\Phi(\hat{f}(t_h))\geq-\frac{3\varepsilon}{2}\rho(\hat{f}+hv,\hat{f})).
\end{equation}
Notice that if $h_n\rightarrow 0^+$, we can pass to a sequence $\{t_{h_n}\}$
with $t_{h_n}\rightarrow\tau\in M$ since $M$ is compact. Calculating
\begin{equation}\label{eq35}
\limsup_{n\rightarrow +\infty}\frac{\Phi(\hat{f}(t_{h_n})+h_nv(t_{h_n}))-\Phi(\hat{f}(t_{h_n}))}{h_n}
\geq-\frac{3\varepsilon}{2}\liminf_{n\rightarrow +\infty}\frac{\rho(\hat{f}+h_nv,\hat{f}))}{h_n}
\end{equation}
and further by $\Phi\in C^{1-0}$ and the definitions of Clark's generalized gradient and the metric $\rho$,
 we claim
\begin{equation}\label{eq36}
\Phi^0(\hat{f}(\tau),v(\tau))\geq-\frac{3\varepsilon}{2}\max_{t_0\leq t\leq t_1}
(\frac{\|v(t)\|}{1+\|\hat{f}(t)\|})\geq-\frac{3\varepsilon}{2}.
\end{equation}
In fact, by $\Phi\in C^{1-0}$ and the continuity for $v(t)$, we know that
$$\frac{\Phi(\hat{f}(t_{h_n})+h_nv(t_{h_n}))-
\Phi(\hat{f}(t_{h_n})+h_nv(\tau))}{h_n}\leq L|v(t_{h_n})-v(\tau)|\rightarrow 0,$$
hence
\begin{eqnarray*}
&\lim\sup\limits_{n\rightarrow +\infty}&\frac{\Phi(\hat{f}(t_{h_n})+h_nv(t_{h_n}))
-\Phi(\hat{f}(t_{h_n}))}{h_n}\\
&\leq& \lim\sup_{n\rightarrow +\infty}\frac{\Phi(\hat{f}(t_{h_n})
+h_nv(t_{h_n}))-\Phi(\hat{f}(t_{h_n})+h_nv(\tau))}{h_n}\\
&&+\ \ \lim\sup_{n\rightarrow +\infty}\frac{\Phi(\hat{f}(t_{h_n})+h_nv(\tau))
-\Phi(\hat{f}(t_{h_n})}{h_n}\\
&=&\Phi^0(\hat{f}(\tau),v(\tau)).
\end{eqnarray*}
Using the definition (\ref{eq8}) of the metric $\rho$, we have
\begin{eqnarray*}
&&\rho(\hat{f}+h_nv,\hat{f})\\
&&=\max_{t_0\leq t\leq t_1}\inf\{\int_0^1\frac{||\dot{c}(s)||}{1+||c(s)||}ds,\\
&&\ \ \ \ c(s)\in C^1([0,1],X),c(0)=\hat{f}(t),c(1)=(\hat{f}+h_nv)(t)\}.
\end{eqnarray*}
Specifically, if we take the following loop connecting $\hat{f}$ and
$\hat{f}+h_nv$ for $0\leq s \leq 1$:
$$c_{t,n}(s)=(1-s)\hat{f}(t)+s(\hat{f}(t)+h_nv(t))=\hat{f}(t)+sh_nv(t),$$
then we have $c_{t,n}(s)=h_nv(t)$, and so we have the uniform convergence in $t$ and $s$,
$$c_{t,n}(s)=\hat{f}(t)+sh_nv(t)\rightarrow \hat{f}(t),n\rightarrow +\infty.$$
So we have that $\int_0^1\frac{1}{1+\|c_{t,n}(s)\|}ds\rightarrow
\int_0^1\frac{1}{1+\|\hat{f}(t)\|}ds=\frac{1}{1+\|\hat{f}(t)\|}$,
$$\liminf_{n\rightarrow +\infty}\frac{\rho(\hat{f}+h_nv,\hat{f}))}{h_n}\leq
\max_{t_0\leq t\leq t_1}(\frac{\|v(t)\|}{1+\|\hat{f}(t)\|}),$$
and (\ref{eq36}) is proved, which violates (\ref{eq27}) and shows that we cannot have the inequality
(\ref{eq23}) for every $t\in M$. Therefore, there is $\bar{t}\in M$ such that
\begin{equation}\label{eq37}
\min_{x^*\in \partial\Phi(\hat{f}(\bar{t}))}\|x^*\|(1+\|\hat{f}(\bar{t})\|)\leq\frac{3\varepsilon}{2},
\end{equation}
this ends the proof of claim (\ref{eq22}).
\end{proof}

By the definitions of $t_0$ and $t_1$, we have that
$dist_\delta(\hat{c}(t),F_{\gamma})\leq\varepsilon$ for
$t_0<t<t_1$; furthermore, by continuity of $\hat{c}(t)$
and $dist_\delta(x,F_{\gamma})$ on $x$, we have that
$$dist_\delta(\hat{c}(t),F_{\gamma})\leq\varepsilon,\forall t\in[t_0,t_1].$$
Here, we have used the notation $dist_{\delta}(\hat{c}(t),F_{\gamma})$
to denote the distance between $\hat{c}(t)$ and $F_{\gamma}$ deduced
by $\delta$ in (\ref{eq9}).
We notice that $\rho$ is the distance deduced by $\delta$ in (\ref{eq9}),
since $\rho(\hat{f},\hat{c})\leq \frac{\varepsilon}{2}$, the triangle inequality implies
that for all $t\in [t_0,t_1]$ we have
\begin{equation}\label{eq38}
dist_{\delta}(\hat{f}(t),F_{\gamma})\leq\frac{\varepsilon}{2}+dist_{\delta}(\hat{c}(t),F_{\gamma}))\leq \frac{\varepsilon}{2}+\varepsilon=\frac{3\varepsilon}{2}.
\end{equation}
Set $x=\hat{f}(\bar{t})$, we get $dist_{\delta}(x,F_{\gamma})\leq\frac{3\varepsilon}{2}.$

Then by (\ref{eq18}), $\varphi(\hat{f})\leq \varphi(\hat{c})$; the fact that $\bar{t}\in M$,
(\ref{eq19}), (\ref{eq15}) and (\ref{eq16}) yields
\begin{equation}\label{eq39}
\gamma+\varepsilon^2\leq\Phi(\hat{f}(\bar{t}))+\Psi(\hat{f}(\bar{t}))\leq\gamma+\frac{5\varepsilon^2}{4}.
\end{equation}
Hence Theorem \ref{thm 5} is proved.

If $F$ is bounded and a closed subset of $X$, then by the definition
of $\delta$, we know that (\cite{7}) $\delta$ distance is equivalent to
the norm distance, so there is $c>0$ such that $dist_{\delta}(x,F_{\gamma})\geq c d(x,F_{\gamma})$,
where $d(x,F_r)$ is the distance between $x$ and the set $F_r$ deduced by the norm in the Banach space $X$.

Then we get
\begin{numcases}{}
\min_{x^*\in \partial\Phi(x)}\|x^*\|(1+\|x\|)\leq\frac{3\varepsilon}{2},\label{eq40}\\
d(x,F_{\gamma})\leq\frac{1}{c}\frac{3\varepsilon}{2},\\
\gamma\leq\Phi(x)\leq\gamma+\frac{5\varepsilon^2}{4}.
\end{numcases}
If we let $\varepsilon=\frac{1}{n}\rightarrow 0$, then we arrive at
a sequence $\{x_n\}$ which satisfies the requirements of Theorems
\ref{thm 6}, the Theorems \ref{thm 6} is proved. Theorems \ref{thm 7} and Theorem \ref{thm 8}
follow from Theorems \ref{thm 5} and Theorem \ref{thm 6}.

\end{proof}

\section{An Application to Hamiltonian systems}

Let $V\in C^{1-0}(\mathds{R}^n,\mathds{R})$; that is, $V$ is a locally Lipschitz potential function defined on $\mathds{R}^n$.
Let us consider the second order Hamiltonian systems
\begin{numcases}{}
-\ddot{q}(t)\in \partial V(q) \label{eq41}\\
\frac{1}{2}|\dot{q}|^2+V(q)=h\in \mathds{R}\label{eq42}
\end{numcases}
\begin{theorem}\label{thm 9}
Suppose $V\in C^{1-0}(\mathds{R}^n,\mathds{R})$ and $h\in \mathds{R}$
satisfy
  \begin{enumerate}
    \item[($V_1$)] $ V(-q)=V(q);$

    \item[($V_2$)] $\exists \mu_1>0,\mu_2\geq 0$, such that $\langle y,q\rangle\geq\mu_1V(q)-\mu_2,$
    $\forall y\in \partial V(q)$, $\forall q\in \mathds{R}^n$;

    \item[($V_3$)] there is an $ M>0$ such that $V(q)\geq h$, whenever $|q|\geq M $.
  \end{enumerate}
Then for any $h>\frac{\mu_2}{\mu_1},$ the system $(\ref{eq41})-(\ref{eq42})$ has at least
one non-constant periodic solution with the given energy $h$ which can be obtained by
Theorem \ref{thm 7}.
\end{theorem}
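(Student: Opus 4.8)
\textbf{Proof proposal for Theorem \ref{thm 9}.}

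The plan is to realize the periodic solutions of $(\ref{eq41})$--$(\ref{eq42})$ as critical points of a suitable Lagrangian-type functional on the loop space $H^1(S^1,\mathds{R}^n)$, exploiting the symmetry $(V_1)$ to work on the subspace of odd loops and to force the solutions to be non-constant, and then verifying the hypotheses of Theorem \ref{thm 7}. First I would introduce the fixed-energy action. Following the classical Maupertuis--Jacobi approach (as in the smooth case), for $q\in E:=H^1(S^1,\mathds{R}^n)$ with $q\not\equiv\text{const}$ one sets
$$
f(q):=\frac12\int_0^1|\dot q(t)|^2\,dt\cdot\int_0^1\bigl(h-V(q(t))\bigr)\,dt,
$$
or a regularized variant thereof; a critical point $q$ with $f(q)>0$, after the standard reparametrization of time, yields a non-constant periodic solution of $(\ref{eq41})$ with energy $h$. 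Since $V\in C^{1-0}$, the functional $f$ is locally Lipschitz on the open subset of non-constant loops, and Clarke's calculus gives $\partial f(q)$ in terms of $\partial V$, so the condition $0\in\partial f(q)$ translates into the differential inclusion $(\ref{eq41})$ together with the energy relation $(\ref{eq42})$.

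Next I would set up the mountain-pass geometry with the closed separating set. Work on $E_{\mathrm{odd}}:=\{q\in E: q(t+\tfrac12)=-q(t)\}$, which by $(V_1)$ is invariant under the gradient flow; on this space constant loops are excluded automatically (the only constant odd loop is $0$, and by $(V_3)$ one can arrange the geometry so that the relevant region avoids it), which is how non-constancy of the solution will be guaranteed. The two ``valleys'' $z_0,z_1$ and the closed set $F$ are chosen using $(V_2)$ and $(V_3)$: condition $(V_2)$ is the Ambrosetti--Rabinowitz-type superquadraticity condition that produces a linking/separation structure, ensuring that a suitable closed set $F$ (for instance a sphere $\{\|q\|=\rho\}$ in $E_{\mathrm{odd}}$, or its intersection with $\Phi_\gamma$) separates $z_0$ from $z_1$ in the sense required by Theorem \ref{thm 5}, while $h>\mu_2/\mu_1$ makes the mountain-pass level $\gamma$ strictly positive, so that the resulting critical value corresponds to $f(\bar q)>0$ and hence to a genuine non-constant trajectory rather than a degenerate one.

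Then I would verify the compactness condition $(CPS)_{F,\gamma;\delta}$ of Definition \ref{def3}: given a sequence $\{q_n\}$ with $\mathrm{dist}_\delta(q_n,F)\to0$, $f(q_n)\to\gamma$, and $(1+\|q_n\|)\min_{y^*\in\partial f(q_n)}\|y^*\|\to0$, one uses $(V_2)$ in the usual way — pairing an element of $\partial f(q_n)$ against $q_n$ and invoking $\langle y,q\rangle\ge\mu_1 V(q)-\mu_2$ — to obtain an a priori bound on $\int|\dot q_n|^2$, hence on $\|q_n\|$ in $E_{\mathrm{odd}}$ (the odd constraint supplies the Poincar\'e inequality that controls the full $H^1$ norm by the $L^2$ norm of the derivative); the compact embedding $H^1(S^1,\mathds{R}^n)\hookrightarrow C(S^1,\mathds{R}^n)$ then yields a convergent subsequence, and one checks the limit stays in the region where $f$ is well-defined using $(V_3)$ and the positivity of $\gamma$. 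With $(CPS)_{F,\gamma;\delta}$ in hand, Theorem \ref{thm 7} delivers $\bar q$ with $f(\bar q)=\gamma>0$ and $0\in\partial f(\bar q)$; unwinding the Maupertuis reparametrization gives the desired non-constant periodic solution of $(\ref{eq41})$--$(\ref{eq42})$ with energy $h$.

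The main obstacle I anticipate is the interface between the fixed-energy (Maupertuis) functional and the locally Lipschitz framework: the functional $f$ is only defined on the open set of non-constant loops and is a product of two terms, so establishing that it is genuinely locally Lipschitz there, computing $\partial f$ correctly via Clarke's chain/product rules for the composition $q\mapsto V(q(\cdot))$, and — most delicately — showing that a $(CPS)$ sequence cannot drift to the boundary where $\int(h-V(q))\to0$ or $q$ degenerates to a constant, will require care; this is exactly where $(V_3)$ (keeping $V\ge h$ outside a ball, so the factor $h-V(q)$ cannot be made positive by running off to infinity) and the lower bound $h>\mu_2/\mu_1$ (keeping $\gamma$ bounded away from $0$) must be used in tandem. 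The symmetry argument to rule out the constant solution, and the verification that $F\cap\Phi_\gamma$ truly separates $z_0$ and $z_1$ in the topological sense demanded by Theorem \ref{thm 5}, are the other points that need to be handled rather than asserted.
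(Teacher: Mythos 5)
Your overall strategy coincides with the paper's: the same Maupertuis-type functional $f(u)=\frac12\int_0^1|\dot u|^2\,dt\cdot\int_0^1(h-V(u))\,dt$ on the loop space, the same restriction to the antiperiodic subspace $E=\{u\in H^1: u(t+\tfrac12)=-u(t)\}$ to exclude nonzero constants, and the same mechanism for $(CPS)_{F,\gamma;\delta}$ (pair $y^*\in\partial f(u_n)$ against $u_n$, use $(V_2)$ together with $f(u_n)\to c$ to obtain $\langle y^*,u_n\rangle\le(-\tfrac{\mu_1}{2}h+\tfrac{\mu_2}{2})\|u_n\|_E^2+O(1)$, and conclude boundedness of $\|u_n\|_E$ from $h>\mu_2/\mu_1$). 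The genuine gap is at the single most load-bearing point: you never actually construct the closed separating set $F$, and your candidate ``a sphere $\{\|q\|=\rho\}$'' is not what the paper uses and is not adapted to the fixed-energy structure. The paper takes the virial-type hypersurface
$$
F_h:=\Bigl\{u\in E:\int_0^1\bigl[V(u)+\tfrac12\min_{y\in\partial V(u)}\langle y,u\rangle\bigr]\,dt=h\Bigr\},
$$
shown to be nonempty and weakly closed, and to separate $z_0=0$ (since $V(0)\le\mu_2/\mu_1<h$ puts $0$ in the sublevel region $G$) from $z_1=Ry_1$ with $R$ large (where $(V_2)$--$(V_3)$ force the defining integral above $h$). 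The point of this particular choice is that on $F_h$ the constraint converts $f$ into $\tfrac14\int_0^1|\dot u|^2\,dt\cdot\int_0^1\min_{y\in\partial V(u)}\langle y,u\rangle\,dt$, and $(V_2)$ then gives $\int_0^1\min_{y}\langle y,u\rangle\,dt\ge\bigl(h-\tfrac{\mu_2}{\mu_1}\bigr)/\bigl(\tfrac12+\tfrac1{\mu_1}\bigr)>0$; combined with weak lower semicontinuity, weak closedness, and $0\notin F_h$, this yields $\inf_{F_h}f>0$, hence $\gamma>0$ and $f(\bar u)>0$ at the critical point, which is exactly what the reparametrization lemma needs to produce a \emph{non-constant} solution. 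None of this positivity argument is recoverable from a generic sphere without redoing all the estimates, so the proposal as written does not close the proof.

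Two smaller mis-framings: the functional $f$ is globally defined and locally Lipschitz on all of $H^1$ (it simply vanishes on constants, where $\int|\dot u|^2=0$), so there is no ``open set of non-constant loops'' and no boundary for a $(CPS)$ sequence to drift to; the real danger is that the critical point is constant or has $f=0$, and that is excluded precisely by $\gamma\ge\inf_{F_h}f>0$, not by any domain restriction. And the second valley should be made explicit, since it is where $(V_3)$ actually enters the geometry: choosing $y_1\in E$ with $\min_t|y_1(t)|>0$ and $R\ge M/\min_t|y_1(t)|$ gives $V(Ry_1(t))\ge h$ for all $t$, hence $\int_0^1(h-V(Ry_1))\,dt\le0$ and $f(z_1)\le0$ with $z_1=Ry_1\ne0$.
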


\begin{corollary}
For $a>0$, $\mu_1>0$, $\mu_2\geq0$, let $V(q)=a|q|^{\mu_1}+\frac{\mu_2}{\mu_1}$.
Then for any $h>\frac{\mu_2}{\mu_1},$ the system $(\ref{eq41})-(\ref{eq42})$ has at least
one non-constant periodic solution with the given energy $h$ which can be obtained by
Theorem \ref{thm 7}.
\end{corollary}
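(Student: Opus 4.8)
The plan is to recast the fixed--energy problem $(\ref{eq41})$--$(\ref{eq42})$ as a critical point problem for the Maupertuis--Jacobi functional on a symmetric loop space and then apply Theorem~\ref{thm 7}. \textbf{Step 1 (variational reformulation).} Using the symmetry $(V_1)$, I would work on the Hilbert space
$$
E:=\{q\in W^{1,2}(S^1;\mathds{R}^n):q(t+\tfrac{1}{2})=-q(t)\ \text{a.e.}\},\qquad S^1:=\mathds{R}/\mathds{Z},
$$
on which $\|q\|^2:=\int_0^1|\dot q|^2\,dt$ is, by Wirtinger's inequality, an equivalent norm, every loop has zero mean, and $q\equiv0$ is the only constant loop. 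Set $f(q):=B(q)A(q)$ with $B(q):=\frac12\int_0^1|\dot q|^2\,dt$ and $A(q):=\int_0^1(h-V(q(t)))\,dt$. Since $V\in C^{1-0}$ and $E\hookrightarrow C^0(S^1;\mathds{R}^n)$ compactly, $A$ is locally Lipschitz, so $f\in C^{1-0}(E;\mathds{R})$. By the Clarke product rule together with the Aubin--Clarke formula for the subdifferential of an integral functional, $0\in\partial f(q)$ with $f(q)>0$ forces $A(q)>0$, $B(q)>0$ and yields a measurable selection $\zeta(t)\in\partial V(q(t))$ with $-A(q)\ddot q=B(q)\zeta$; rescaling time by $\lambda:=\sqrt{A(q)/B(q)}>0$, the loop $\tilde q(s):=q(\lambda s)$ is non--constant, has period $1/\lambda>0$, and satisfies $-\ddot{\tilde q}(s)\in\partial V(\tilde q(s))$, while averaging the conserved energy over one period returns the value $h$. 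A critical point of $f|_E$ is critical for $f$ on the whole loop space by the principle of symmetric criticality (the inclusion is equivariant because $V(-x)=V(x)$ gives $\partial V(-x)=-\partial V(x)$). Hence any critical point of $f$ on $E$ at a positive level yields a non--constant periodic solution of $(\ref{eq41})$--$(\ref{eq42})$ with energy $h$.

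\textbf{Step 2 (mountain--pass geometry).} Put $z_0:=0$, so $f(z_0)=0$. Condition $(V_2)$ with $q=0$ gives $V(0)\le\mu_2/\mu_1<h$; therefore for small $\rho>0$ every $q$ with $\|q\|=\rho$ has small $C^0$--norm, so $A(q)\ge h-V(0)-o(1)>0$ and $f(q)\ge\frac12\rho^2(h-V(0))-o(\rho^2)=:\alpha>0$. Choosing a large loop $z_1\in E$ with $\|z_1\|>\rho$ (for $n\ge2$ one may take $z_1(t)=R(\cos2\pi t,\sin2\pi t,0,\dots,0)$ with $R>M$, whence $|z_1|\equiv R$, $A(z_1)\le0$ by $(V_3)$, and $f(z_1)\le0$; for $n=1$ a square--wave--type loop works, using that $(V_2)$--$(V_3)$ force $V$ to be coercive), every path in $E$ joining $z_0$ and $z_1$ meets $\{\|q\|=\rho\}$, so its maximum of $f$ is at least $\alpha$. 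Thus
$$
0<\alpha\le\gamma:=\inf_{c\in\Gamma}\max_{0\le t\le1}f(c(t))<+\infty,\qquad\Gamma=\{c\in C^0([0,1];E):c(0)=z_0,\ c(1)=z_1\},
$$
finiteness coming from, e.g., the path $s\mapsto sz_1$. Since $f$ is continuous, $\{f<\gamma\}$ is open, and by the definition of $\gamma$ no path in $\Gamma$ remains in $\{f<\gamma\}$; hence $z_0,z_1$ lie in different components of $\{f<\gamma\}$, so $F:=\Phi_\gamma=\{q\in E:f(q)\ge\gamma\}$ is a closed set with $F\cap\Phi_\gamma=F$ separating $z_0$ and $z_1$ --- exactly the hypothesis of Theorem~\ref{thm 7}.

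\textbf{Step 3 (compactness and conclusion).} I claim $f$ satisfies $(CPS)_{F,\gamma;\delta}$ with $F=\Phi_\gamma$ (the distance condition is not even needed). Given $\{q_n\}\subset E$ with $f(q_n)\to\gamma$ and $(1+\|q_n\|)\min_{y^*\in\partial f(q_n)}\|y^*\|\to0$, pick a minimal $y_n^*\in\partial f(q_n)$ with associated selection $\zeta_n(t)\in\partial V(q_n(t))$; pairing $y_n^*$ with $q_n$, using $(V_2)$ to bound $\int\langle\zeta_n,q_n\rangle\ge\mu_1\int V(q_n)-\mu_2=\mu_1(h-A(q_n))-\mu_2$, and substituting $B(q_n)A(q_n)=f(q_n)\to\gamma$, the hypothesis $h>\mu_2/\mu_1$ yields $\kappa\|\dot q_n\|^2\le C+o(1)$ for some $\kappa>0$, so $\{q_n\}$ is bounded in $E$. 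Along a subsequence $q_n\to q$ in $C^0$ and $q_n\rightharpoonup q$ in $E$, with $A(q_n)\to A(q)\ge\gamma/(\frac12\sup\|\dot q_n\|^2)>0$ and $\{\zeta_n\}$ bounded in $L^\infty$; testing $y_n^*$ against $q_n-q$ then forces $\int\langle\dot q_n,\dot q_n-\dot q\rangle\to0$, and subtracting the weak--convergence term $\int\langle\dot q,\dot q_n-\dot q\rangle\to0$ gives $q_n\to q$ in $E$. By Theorem~\ref{thm 7}, $\gamma>0$ is a critical value of $f$: there is $\bar q\in E$, necessarily non--constant, with $f(\bar q)=\gamma$ and $0\in\partial f(\bar q)$, and Step~1 converts $\bar q$ into the required non--constant periodic solution with energy $h$. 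The Corollary follows by the direct check that $V(q)=a|q|^{\mu_1}+\mu_2/\mu_1$ obeys $(V_1)$--$(V_3)$; in fact $(V_2)$ holds with equality and $(V_3)$ holds with $M=((h-\mu_2/\mu_1)/a)^{1/\mu_1}$.

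\textbf{Main obstacles.} The mountain--pass geometry of Step~2 and the abstract invocation of Theorem~\ref{thm 7} are routine. The delicate points are: (i) the nonsmooth Maupertuis correspondence of Step~1 --- especially justifying conservation of the energy $\frac12|\dot{\tilde q}|^2+V(\tilde q)$ along the differential inclusion $-\ddot{\tilde q}\in\partial V(\tilde q)$, i.e.\ a chain rule for $V\circ\tilde q$ along a Lipschitz curve that may pass through points of non--differentiability of $V$, as well as the symmetric--criticality reduction in the $C^{1-0}$ category; and (ii) carrying out the $(V_2)$--based estimates of Step~3 rigorously at the level of Clarke subgradients, i.e.\ controlling $\partial f$ through measurable selections of $\partial V(q_n(\cdot))$ and the subdifferential of the integral term. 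I expect (i) to be the genuine heart of the argument.
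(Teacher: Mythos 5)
Your proposal is correct in substance, but it reaches the Corollary by a partly different route than the paper. The paper's own proof is just the closing line of your argument: verify that $V(q)=a|q|^{\mu_1}+\frac{\mu_2}{\mu_1}$ satisfies $(V_1)$--$(V_3)$ and invoke Theorem \ref{thm 9}, whose proof (Lemmas \ref{lemma1}--\ref{lemma11}) feeds into Theorem \ref{thm 7} the ``virial'' surface $F_h=\{u\in E:\int_0^1[V(u)+\frac{1}{2}\min_{y\in\partial V(u)}\langle y,u\rangle]dt=h\}$ and spends its effort on $F_h\neq\emptyset$, $\inf_{F_h}f>0$ and the separation of $0$ and $z_1$ by $F_h$. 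You keep the same functional $f(u)=\frac{1}{2}\int_0^1|\dot u|^2dt\int_0^1(h-V(u))dt$ on the same antisymmetric space $E$, the same Maupertuis reduction (your Step 1 is the paper's Lemmas \ref{lemma1}--\ref{lemma2}), and essentially the same Cerami-boundedness estimate from $(V_2)$ and $h>\frac{\mu_2}{\mu_1}$ (your Step 3 reproduces Lemma \ref{lemma4}); what you change is the separating set: taking $F=\Phi_\gamma$ makes separation automatic once $\gamma>\max\{f(z_0),f(z_1)\}$, which you obtain by a direct small-sphere estimate using $V(0)\leq\frac{\mu_2}{\mu_1}<h$ and $E\hookrightarrow C^0$, whereas the paper derives positivity of the level from $\inf_{F_h}f>0$ together with the fact that every admissible path crosses $F_h$. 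Your choice shortens the verification (Lemmas \ref{lemma3}, \ref{lemma5}, \ref{lemma8}--\ref{lemma11} become unnecessary and the $\delta$-distance condition in $(CPS)_{F,\gamma;\delta}$ is vacuous), at the price of using Theorem \ref{thm 7} only in its classical mountain-pass regime and losing the localization of critical points near $F_h$ that the paper's construction is designed to provide; note also that for $n=1$ your $z_1$ must respect $u(t+1/2)=-u(t)$, so $\min_t|z_1(t)|>0$ is impossible and one takes e.g. $z_1(t)=R\sin 2\pi t$, which works here because this $V$ is coercive. Two caveats are shared with the paper rather than gaps of your own: the nonsmooth energy-conservation/chain-rule step behind the correspondence of Step 1 (the paper's Lemma \ref{lemma2} is only asserted ``as in \cite{2}''), and the fact that for $0<\mu_1<1$ this $V$ fails to be locally Lipschitz at the origin, so the case $\mu_1<1$ of the Corollary needs an extra argument in both treatments.
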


\begin{remark}If $\mu_1>1$, then $V(q)=a|q|^{\mu_1}+\frac{\mu_2}{\mu_1}\in C^1(\mathds{R}^n, \mathds{R})$;
but if $0<\mu_1\leq1$ it is not in $C^1(\mathds{R}^n,\mathds{R})$, but $V\in C^{1-0}(\mathds{R}^n,\mathds{R})$.
\end{remark}

In order to prove Theorem \ref{thm 9}, we define the Sobolev space
\begin{equation}\label{eq43}
H^1:=W^{1,2}(\mathds{R}/T\mathds{Z},\mathds{R}^n)=\{u:\mathds{R}\rightarrow \mathds{R}^n,u\in L^2,
\dot{u}\in L^2,u(t+1)=u(t)\}.
\end{equation}
 Then the standard $H^1$ norm is equivalent to
\begin{equation}\label{eq44}
 \|u\|:=\|u\|_{H^1}=\left(\int^1_0|\dot{u}|^2dt\right)^{1/2}+|\int_0^1 u(t)dt|.
 \end{equation}

\begin{lemma}\label{lemma1}{\rm(\cite{2})}\ \ Let
$f(u):=\frac{1}{2}\int^1_0|\dot{u}|^2dt\int^1_0(h-V(u))dt$ and
$\widetilde{u}\in H^1$ be such that $f^{\prime}(\widetilde{u})=0$
 and $f(\widetilde{u})>0$. Set
\begin{equation}\label{eq45}
\frac{1}{T^2}:=\frac{\int^1_0(h-V(\widetilde{u}))dt}{\frac{1}{2}\int^1_0|\dot{\widetilde{u}}|^2dt}.
\end{equation}
Then $\widetilde{q}(t)=\widetilde{u}(t/T)$ is a non-constant
$T$-periodic solution for {\rm(\ref{eq41})-(\ref{eq42})}.
\end{lemma}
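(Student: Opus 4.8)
The plan is to read the critical-point condition $f'(\widetilde u)=0$ as the weak Euler--Lagrange equation of the Maupertuis functional $f=A\cdot B$, where $A(u):=\frac{1}{2}\int_0^1|\dot u|^2\,dt$ and $B(u):=\int_0^1(h-V(u))\,dt$, and then to check that the normalization defining $T$ forces the conserved energy of the rescaled curve to equal $h$. Write $a:=A(\widetilde u)$ and $b:=B(\widetilde u)$, so that $f(\widetilde u)=ab$ and $T^{-2}=b/a$. Since $f(\widetilde u)>0$ and $a\ge 0$, necessarily $a>0$ (otherwise $\widetilde u$ is constant and $f(\widetilde u)=0$), hence $b>0$ and $T\in(0,+\infty)$ is well defined; moreover $\dot{\widetilde u}\not\equiv 0$, so $\widetilde u$, and therefore $\widetilde q$, is non-constant. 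As $\widetilde u$ is $1$-periodic, $\widetilde q(t)=\widetilde u(t/T)$ is automatically $T$-periodic.

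First I would differentiate $f$. On $H^1$ the quadratic part $A$ is of class $C^1$, while, using the embedding $H^1\hookrightarrow C^0$, the part $B$ is locally Lipschitz. Applying Clarke's product rule for a smooth factor times a Lipschitz factor together with his formula for the subdifferential of an integral functional \cite{6}, the inclusion $0\in\partial f(\widetilde u)$ produces a measurable selection $y(t)\in\partial V(\widetilde u(t))$ with
\[
b\int_0^1\langle\dot{\widetilde u},\dot w\rangle\,dt-a\int_0^1\langle y,w\rangle\,dt=0,\qquad\forall\,w\in H^1 .
\]
This is the weak form of $-b\ddot{\widetilde u}=a\,y$; dividing by $b$ and using $a/b=T^2$ gives $-\ddot{\widetilde u}(s)=T^2y(s)\in T^2\partial V(\widetilde u(s))$ a.e. Since $\widetilde u$ is bounded and $\partial V$ is locally bounded, $\ddot{\widetilde u}\in L^\infty$, so $\widetilde u\in C^{1,1}$ and the inclusion holds everywhere in the Carath\'eodory sense. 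Differentiating $\widetilde q(t)=\widetilde u(t/T)$ twice yields $\ddot{\widetilde q}(t)=T^{-2}\ddot{\widetilde u}(t/T)=-y(t/T)$ with $y(t/T)\in\partial V(\widetilde q(t))$, which is exactly the inclusion (\ref{eq41}).

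It then remains to obtain the energy relation (\ref{eq42}). I would show that the rescaled energy $E(s):=\frac{1}{2T^2}|\dot{\widetilde u}(s)|^2+V(\widetilde u(s))$ is constant along the trajectory; granting this, integration of $E\equiv E_0$ over $[0,1]$ combined with the defining identity $a/T^2=b=h-\int_0^1V(\widetilde u)\,dt$ immediately gives $E_0=h$, and evaluating $E$ at $s=t/T$ converts $E\equiv h$ into $\frac{1}{2}|\dot{\widetilde q}(t)|^2+V(\widetilde q(t))=h$, i.e. (\ref{eq42}). Thus the whole statement reduces to the conservation of $E$.

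The main obstacle is precisely this conservation in the nonsmooth setting. Since $\dot{\widetilde u}$ is absolutely continuous one has $\frac{d}{ds}\bigl(\frac{1}{2}|\dot{\widetilde u}|^2\bigr)=\langle\dot{\widetilde u},\ddot{\widetilde u}\rangle=-T^2\langle y,\dot{\widetilde u}\rangle$ a.e., while the chain rule for the Lipschitz composition $s\mapsto V(\widetilde u(s))$ only yields $\frac{d}{ds}V(\widetilde u(s))=\langle\zeta(s),\dot{\widetilde u}(s)\rangle$ for \emph{some} $\zeta(s)\in\partial V(\widetilde u(s))$, so that $E'=T^{-2}\langle\zeta-y,\dot{\widetilde u}\rangle$ vanishes only if the Euler--Lagrange selection $y$ and the chain-rule selection $\zeta$ agree against $\dot{\widetilde u}$. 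When $\partial V$ is genuinely multivalued these selections need not coincide, and this is the one step where the locally Lipschitz hypothesis (rather than $C^1$) demands real care; I would close the gap by the refined chain rule along the $C^{1,1}$ curve $\widetilde u$, showing that for a.e.\ $s$ every element of $\partial V(\widetilde u(s))$ pairs with $\dot{\widetilde u}(s)$ to the common value $\frac{d}{ds}V(\widetilde u(s))$---the point carried out in \cite{2}.
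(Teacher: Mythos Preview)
The paper does not supply its own proof of this lemma; it simply cites the result from Ambrosetti--Coti Zelati \cite{2} and proceeds. Your argument---reading $f'(\widetilde u)=0$ as the weak Euler--Lagrange equation, rescaling by $T$, and then verifying the energy identity by integrating the conserved quantity---is exactly the standard route recorded in \cite{2}, so there is nothing to compare against here.

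Two remarks on your write-up. First, a small arithmetic slip: in $E'=T^{-2}\langle\zeta-y,\dot{\widetilde u}\rangle$ the factor $T^{-2}$ should not be there, since the $T^{-2}$ from differentiating the kinetic term cancels against the $T^{2}$ in $-\ddot{\widetilde u}=T^{2}y$. Second, your honest discussion of the energy-conservation step in the genuinely nonsmooth case---where the chain-rule selection $\zeta$ and the Euler--Lagrange selection $y$ need not coincide---actually goes beyond what the paper addresses. The lemma as quoted writes $f'(\widetilde u)=0$, the notation of a classical derivative, and \cite{2} treats smooth potentials; the present paper invokes the lemma in the $C^{1-0}$ context without commenting on this gap. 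Your proposed resolution (that for a.e.\ $s$ every element of $\partial V(\widetilde u(s))$ pairs with $\dot{\widetilde u}(s)$ to the common value $(V\circ\widetilde u)'(s)$) is the right kind of statement, but be aware that it is not in \cite{2}; it requires a density argument along the lines of Clarke's a.e.\ strict differentiability of Lipschitz functions.
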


In a manner similar to Ambrosetti-Coti
Zelati\cite{2}, from the symmetry condition $(V_1)$ we let
$E:=\{u\in
H^1=W^{1,2}(\mathds{R}/\mathds{Z},\mathds{R}^n),u(t+1/2)=-u(t)\}.$
A similar proof as in \cite{2}, we have

\begin{lemma}\label{lemma2}
 If $\bar{u}\in E$ is a critical point of
$f(u)$ and $f(\bar{u})>0$, then we have
$\bar{q}(t)=\bar{u}(t/T)$ is a
non-constant $T$-periodic solution of
{\rm(\ref{eq41})-(\ref{eq42})}.
\end{lemma}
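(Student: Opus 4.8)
The plan is to deduce Lemma \ref{lemma2} from Lemma \ref{lemma1} by exploiting the extra symmetry built into the space $E$. First I would observe that, since $E\subset H^1$ is the closed subspace of $u$ with $u(t+1/2)=-u(t)$, a critical point $\bar u$ of the restriction $f|_E$ need not a priori be a critical point of $f$ on all of $H^1$; however, the symmetry condition $(V_1)$, $V(-q)=V(q)$, together with the translation-invariance of the integrals, makes $f$ invariant under the $\mathbb{Z}/2$-action $(\sigma u)(t):=-u(t+1/2)$, whose fixed-point set is exactly $E$. By the Palais principle of symmetric criticality (or, in the locally Lipschitz setting, its Clarke-subdifferential analogue), a critical point of $f|_E$ is then a genuine critical point of $f$ on $H^1$, i.e. $f'(\bar u)=0$ (or $0\in\partial f(\bar u)$) in $H^1$. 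This is the conceptual heart of the argument and the step I expect to need the most care, because one must check that Clarke's generalized gradient is compatible with the group action — that $\partial(f|_E)(\bar u)$ and $\partial f(\bar u)\cap E$ agree, so that $0$ being in the former forces $0$ in the latter.

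Once $\bar u$ is known to be a critical point of $f$ on the full space $H^1$ with $f(\bar u)>0$, I would simply invoke Lemma \ref{lemma1} with $\widetilde u=\bar u$: defining $T$ by \eqref{eq45}, the rescaled function $\bar q(t)=\bar u(t/T)$ is a non-constant $T$-periodic solution of \eqref{eq41}--\eqref{eq42}. The non-constancy is automatic from $f(\bar u)>0$, since a constant $u$ would give $\int_0^1|\dot u|^2\,dt=0$ and hence $f(u)=0$; moreover an element of $E$ satisfying $u(t+1/2)=-u(t)$ that were constant would have to vanish identically, which again contradicts $f(\bar u)>0$.

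It remains only to record why one may legitimately reduce to $E$ at all: elements of $E$ automatically have mean zero, $\int_0^1 u\,dt=0$ (integrate the relation $u(t+1/2)=-u(t)$ over a period), so the Poincar\'e--Wirtinger inequality applies and the norm \eqref{eq44} is controlled by $\big(\int_0^1|\dot u|^2\,dt\big)^{1/2}$ alone on $E$; this is what makes $E$ a natural arena for applying the mountain-pass machinery of Theorem \ref{thm 7} and is the reason the construction of the critical point $\bar u$ is carried out there in the first place. The details of this reduction are routine and parallel to Ambrosetti--Coti Zelati \cite{2}; the only genuinely new ingredient, as noted, is ensuring that symmetric criticality survives the passage from the $C^1$ setting of \cite{2} to the locally Lipschitz $C^{1-0}$ setting, which one handles by working with $\Phi^0$ and $\partial\Phi$ and using properties (1)--(4) of $\Phi^0$ recorded earlier together with the evenness of $V$.
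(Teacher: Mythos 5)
Your proposal is correct and follows essentially the route the paper intends: the paper gives no proof of Lemma \ref{lemma2}, merely noting that it follows ``as in'' Ambrosetti--Coti Zelati \cite{2} from the symmetry $(V_1)$, and your argument (invariance of $f$ under $(\sigma u)(t)=-u(t+1/2)$, symmetric criticality to pass from $f|_E$ to $f$ on $H^1$, then Lemma \ref{lemma1} with non-constancy forced by $f(\bar u)>0$) is precisely the standard way to fill that in. You are also right that the only point needing genuine care is the locally Lipschitz version of Palais's symmetric criticality principle for the Clarke subdifferential, which does hold for a finite group acting by linear isometries on the Hilbert space $H^1$.
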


We define a weakly closed subset of $H^1$,
\begin{equation}\label{eq46}
F_h:=\{u\in E:\int_0^1[V(u)+\frac{1}{2}\min\limits_{y\in \partial V(u)}\langle y, u\rangle] dt=h\}.
\end{equation}

\begin{lemma}\label{lemma3}
If $(V_2)-(V_3)$ hold, then $F_h\not=\emptyset$,for all $h > \frac{\mu_2}{\mu_1}$.
\end{lemma}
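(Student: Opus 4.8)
The plan is to exhibit an explicit element of $F_h$. Since we only need nonemptiness, the natural candidate is a one-parameter family of constant-norm loops in $E$ of the form $u_r(t) = r\,e(t)$, where $e(t)$ is a fixed nonconstant loop with $e(t+1/2)=-e(t)$ (for instance $e(t)=(\cos 2\pi t,\sin 2\pi t,0,\dots,0)$, so that $|e(t)|\equiv 1$ and $u_r\in E$), and then show that the scalar function
$$
g(r):=\int_0^1\Bigl[V(r e(t))+\tfrac12\min_{y\in\partial V(re(t))}\langle y,re(t)\rangle\Bigr]dt
$$
takes the value $h$ for some $r\ge 0$. First I would check that $g$ is continuous in $r$: the map $q\mapsto\min_{y\in\partial V(q)}\langle y,q\rangle$ is bounded on bounded sets (by property (2) of Clarke's generalized gradient, $\|y\|\le K$ locally) and one has the standard upper semicontinuity of $q\mapsto\partial V(q)$; combined with Lebesgue dominated convergence over $t\in[0,1]$ this gives continuity of $g$. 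Then the argument is an intermediate value theorem: I would show $g(0)$ is small (indeed $g(0)=V(0)$, which we may assume is $\le \mu_2/\mu_1 < h$ after noting that $(V_2)$ with $q=0$ forces $0\ge\mu_1 V(0)-\mu_2$, i.e. $V(0)\le\mu_2/\mu_1$), while $g(r)\to+\infty$ as $r\to+\infty$.

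The growth at infinity is where conditions $(V_2)$ and $(V_3)$ enter, and this is the step I expect to require the most care. From $(V_2)$, for every $q$ and every $y\in\partial V(q)$ we have $\langle y,q\rangle\ge\mu_1 V(q)-\mu_2$, hence
$$
V(q)+\tfrac12\min_{y\in\partial V(q)}\langle y,q\rangle\ \ge\ V(q)+\tfrac12(\mu_1 V(q)-\mu_2)\ =\ \Bigl(1+\tfrac{\mu_1}{2}\Bigr)V(q)-\tfrac{\mu_2}{2}.
$$
Integrating over $t$ with $q=re(t)$, $|re(t)|=r$, gives $g(r)\ge (1+\mu_1/2)\int_0^1 V(re(t))\,dt-\mu_2/2$. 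Now I need $\int_0^1 V(re(t))\,dt\to+\infty$; this follows from $(V_3)$ only after upgrading it to a genuine growth statement. The cleanest route is: $(V_2)$ says $\mu_1 V(q)\le\langle y,q\rangle+\mu_2\le K|q|+\mu_2$ near any fixed $q$... no, that bounds $V$ above, not below. Instead I would integrate $(V_2)$ radially: for fixed direction $\theta$ with $|\theta|=1$, the function $\rho\mapsto V(\rho\theta)$ is locally Lipschitz, hence absolutely continuous on $[1,r]$, and its a.e. derivative equals $\langle y(\rho),\theta\rangle$ for some $y(\rho)\in\partial V(\rho\theta)$; so $\tfrac{d}{d\rho}V(\rho\theta)=\tfrac1\rho\langle y(\rho),\rho\theta\rangle\ge\tfrac1\rho(\mu_1 V(\rho\theta)-\mu_2)$. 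Solving this differential inequality (Gronwall-type) gives $V(\rho\theta)\ge C\rho^{\mu_1}-\mu_2/\mu_1$ for $\rho\ge1$ with $C>0$ depending on $\min_{|q|=1}V(q)$ — which is finite and we may take positive after enlarging by a constant, or simply carry the constant through. Since $|re(t)|=r$, this yields $\int_0^1 V(re(t))\,dt\ge Cr^{\mu_1}-\mu_2/\mu_1\to+\infty$, and therefore $g(r)\to+\infty$.

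Having established $g(0)\le\mu_2/\mu_1<h$ and $g(r)\to+\infty$ together with continuity of $g$, the intermediate value theorem produces $r_\ast>0$ with $g(r_\ast)=h$, so $u_{r_\ast}=r_\ast e(\cdot)\in F_h$ and $F_h\ne\emptyset$. I would also remark that $F_h$ is weakly closed in $H^1$, as claimed in the line preceding \eqref{eq46}, but since the lemma only asserts nonemptiness this is not strictly needed here. The one genuine obstacle to watch is the radial integration of $(V_2)$: one must be careful that the a.e.-derivative of $\rho\mapsto V(\rho\theta)$ is realized by an element of the Clarke subdifferential (this is Lebourg's mean value theorem / the chain rule for Clarke gradients along a smooth curve), so that the differential inequality is legitimate; alternatively, one can avoid differential inequalities entirely by using Lebourg's mean value theorem directly on the segment from $\theta$ to $r\theta$ to get $V(r\theta)-V(\theta)=\langle\zeta,(r-1)\theta\rangle$ for some $\zeta\in\partial V(s\theta)$, $s\in(1,r)$, then bound $\langle\zeta,(r-1)\theta\rangle\ge\frac{r-1}{s}(\mu_1 V(s\theta)-\mu_2)$ and iterate on dyadic shells — but the Gronwall form is cleaner to write.
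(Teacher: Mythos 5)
Your overall strategy is exactly the paper's: restrict to the ray $a\mapsto a\,u$ for a fixed $u\in E$ with $\min_{t}|u(t)|>0$, note that $(V_2)$ at $q=0$ forces $g(0)=V(0)\le\mu_2/\mu_1<h$, show $g$ exceeds $h$ for large scaling parameter, and invoke the intermediate value theorem. Where you diverge is the large-parameter estimate, and there you have made the step both harder than necessary and slightly defective. The paper does not need $g(r)\to+\infty$: once $r\ge M$ one has $|r e(t)|\ge M$ for all $t$, so $(V_3)$ gives $\int_0^1 V(re(t))\,dt\ge h$ directly, and combining with the $(V_2)$ lower bound $V(q)+\tfrac12\min_{y\in\partial V(q)}\langle y,q\rangle\ge(1+\tfrac{\mu_1}{2})V(q)-\tfrac{\mu_2}{2}$ yields $g(r)\ge(1+\tfrac{\mu_1}{2})h-\tfrac{\mu_2}{2}=h+\tfrac12(\mu_1h-\mu_2)>h$, the last inequality being precisely the hypothesis $h>\mu_2/\mu_1$. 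That is the entire content of the step; no growth rate for $V$ is required.

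Your Gronwall/Lebourg detour, besides being superfluous, has a genuine gap as written: the constant $C$ in $V(\rho\theta)\ge C\rho^{\mu_1}-\mu_2/\mu_1$ is (a multiple of) $V(\theta)-\mu_2/\mu_1$, and nothing in the hypotheses prevents $V\le\mu_2/\mu_1$ on the unit sphere, in which case $C\le0$ and the differential inequality yields no growth at all; ``enlarging by a constant'' is not available since adding a constant to $V$ changes condition $(V_2)$. The fix is to start the radial integration at radius $M$ rather than $1$, where $(V_3)$ guarantees $V(M\theta)\ge h>\mu_2/\mu_1$ and hence a positive constant --- but at that point you have already used $(V_3)$ in exactly the way that makes the whole Gronwall argument unnecessary. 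One further caveat, shared with the paper's own proof: the intermediate value theorem needs continuity of $r\mapsto g(r)$, and upper semicontinuity of $q\mapsto\partial V(q)$ only gives lower semicontinuity of $q\mapsto\min_{y\in\partial V(q)}\langle y,q\rangle$ in general, so your continuity claim is asserted rather than proved; this is a point worth being honest about rather than waving at ``standard'' properties.
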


\begin{proof}
Take $u\in E$ satisfying $\min\limits_{t\in [0,1]}|u(t)|>0$. By condition $(V_2)$,
we know $V(0)\leq \frac{\mu_2}{\mu_1}$. We define
$$g(w)=\int_0^1[V(w)+\frac{1}{2}\min\limits_{y\in \partial V(w)}\langle y,w\rangle]dt,$$
$$g_u(a):=g(au)=\int_0^1[V(au)+\frac{1}{2}\min_{x\in \partial V(au)}\langle x, au\rangle]dt.$$
Then we have $g_u(0)=g(0)=V(0)\leq\frac{\mu_2}{\mu_1}. \label{eq47}$

We notice if $a$ is large enough, $\min\limits_{t\in [0,1]}|au|=a\min\limits_{0\leq t\leq 1} |u(t)|\geq M.$
We use $(V_2)-(V_3)$ to get
\begin{eqnarray}
g_u(a)=g(au)&=&\int_0^1[V(au)+\frac{1}{2}\min\limits_{x\in \partial V(au)}\langle x, au\rangle]dt\label{eq48}\\
&\geq&(1+\frac{\mu_1}{2})\int_0^1V(au)dt-\frac{\mu_2}{2}\label{eq49}\\
&\geq&(1+\mu_1)h-\frac{\mu_2}{2}=h+\frac{1}{2}(\mu_1h-\mu_2).
\end{eqnarray}
Hence any $ h>\frac{\mu_2}{\mu_1}$, when $a$ large enough, $g_u(a)>h$, we know there is $a(u)>0$ such that $a(u)u\in F$.
\end{proof}

\begin{lemma}\label{lemma4}
If $(V_1) - (V_3)$ hold, then for any given $c>0$,
 $f(u)$ satisfies $(CPS)_{F,c;\delta}$ condition; that is, if $\{u_n\}\subset E$ satisfies
\begin{equation}\label{eq51}
dist_\delta(u_n,F)\rightarrow 0,
 f(u_n)\rightarrow c>0,\ \ \ \
(1+\|u_n\|)\min_{y^*\in \partial f(u_n)}\|y^*\|\rightarrow 0,
\end{equation}
then $\{u_n\}$ has a strongly convergent subsequence.
\end{lemma}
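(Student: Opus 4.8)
The plan is to follow the standard two-step scheme for verifying a Cerami-type condition: first extract from the sequence $\{u_n\}$ a weakly convergent subsequence using an a priori bound, and then upgrade weak convergence to strong convergence by exploiting the structure of $f$. To begin, I would write $f(u) = A(u)\,B(u)$ where $A(u) := \frac12\int_0^1 |\dot u|^2\,dt$ and $B(u) := \int_0^1(h - V(u))\,dt$, and compute (in the Clarke sense) the generalized gradient: any $y^* \in \partial f(u)$ has the form $y^* = B(u)\,(-\ddot u\text{-term}) + A(u)\,w$ with $w \in -\int_0^1 \partial V(u)\,dt$ acting by duality. The condition $(1+\|u_n\|)\min_{y^*\in\partial f(u_n)}\|y^*\| \to 0$ then gives, after pairing against suitable test functions, control on both $\|\dot u_n\|_{L^2}$ and on the mean $\int_0^1 u_n\,dt$. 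The first step, and the one I expect to be the main obstacle, is to show $\{u_n\}$ is bounded in $H^1$: one pairs $y_n^*$ with $u_n$ itself, uses condition $(V_2)$ in the form $\langle y, q\rangle \ge \mu_1 V(q) - \mu_2$ to bound $\int_0^1 V(u_n)\,dt$ from above in terms of $\int_0^1|\dot u_n|^2\,dt$, and then combines this with $f(u_n) \to c > 0$ (which keeps $B(u_n) = \int_0^1(h-V(u_n))\,dt$ bounded away from $0$ and $\infty$) to close the loop and bound $A(u_n)$; the hypothesis $dist_\delta(u_n,F)\to 0$ together with the definition of $F_h$ is what prevents $\int_0^1 V(u_n)\,dt$ from escaping to $-\infty$ (equivalently $B(u_n)\to+\infty$), which would otherwise make $A(u_n)\to 0$ and force $f(u_n)\to 0$, contradicting $c>0$. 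The subspace $E$ of antiperiodic functions helps here because on $E$ one has $\int_0^1 u\,dt = 0$ automatically, so the $H^1$-norm reduces to $\|\dot u\|_{L^2}$ and boundedness of $A(u_n)$ is boundedness in $H^1$; I would use this simplification throughout.

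Once boundedness is established, pass to a subsequence with $u_n \rightharpoonup u$ weakly in $H^1$ and hence (by compact Sobolev embedding $H^1 \hookrightarrow C^0$ on the circle) $u_n \to u$ uniformly. Uniform convergence plus local Lipschitz continuity of $V$ gives $\int_0^1 V(u_n)\,dt \to \int_0^1 V(u)\,dt$ and $B(u_n) \to B(u)$, and since $B(u_n)$ is bounded away from zero we get $B(u) \ne 0$. For the strong convergence of the derivatives, I would use the near-critical-point condition: choose $y_n^* \in \partial f(u_n)$ with $\|y_n^*\| \to 0$, write $y_n^* = B(u_n) L u_n + A(u_n) r_n$ where $L u_n$ is the Riesz representative of $v \mapsto \int_0^1 \dot u_n \cdot \dot v\,dt$ (the dominant elliptic term) and $r_n$ is the bounded remainder coming from $\partial V$, and test against $v = u_n - u$. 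Since $\int_0^1 \dot u_n\cdot\dot u_n\,dt = \|\dot u_n\|^2$ while $\int_0^1\dot u_n\cdot\dot u\,dt \to \|\dot u\|^2$ by weak convergence, and the $r_n$-term and the $y_n^*$-term both tend to zero against $u_n - u$ (the former because $u_n\to u$ uniformly and $\partial V$ is locally bounded, the latter because $\|y_n^*\|\to 0$ and $\|u_n-u\|$ is bounded), dividing by $B(u_n)$ (legitimate since it is bounded away from $0$) yields $\|\dot u_n\|_{L^2}^2 \to \|\dot u\|_{L^2}^2$. Combined with weak convergence in the Hilbert space $H^1$, norm convergence gives $u_n \to u$ strongly in $H^1$, which is the assertion. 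I would remark that this argument is essentially the one in Ambrosetti--Coti Zelati \cite{2} adapted to the nonsmooth setting, the only genuinely new point being the bookkeeping with Clarke's $\partial V$ in place of $\nabla V$ and the use of the $\delta$-distance condition in the boundedness step.
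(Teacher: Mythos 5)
Your proposal follows the paper's proof in its essential step: boundedness of $\|u_n\|_E$ is obtained exactly as in the paper by pairing $y_n^*\in\partial f(u_n)$ with $u_n$, using $(V_2)$ together with $f(u_n)\to c$ to arrive at $\langle y_n^*,u_n\rangle\le \frac{\mu_2-\mu_1h}{2}\|u_n\|_E^2+O(1)$ with a strictly negative coefficient (since $h>\mu_2/\mu_1$), while the Cerami factor $(1+\|u_n\|)\|y_n^*\|\to 0$ makes the left-hand side $o(1)$; the weak-to-strong upgrade via the compact embedding and testing against $u_n-u$ is exactly the part the paper dismisses as ``standard.'' One small correction: the hypothesis $dist_{\delta}(u_n,F)\to 0$ plays no role in the boundedness step (the paper never uses it there), and your stated reason for invoking it --- that $B(u_n)\to+\infty$ would force $A(u_n)\to 0$ and hence $f(u_n)\to 0$ --- is a non sequitur, since the product $A(u_n)B(u_n)$ can remain equal to $c$ while $A(u_n)\to 0$ and $B(u_n)\to\infty$; in any case $A(u_n)\to 0$ on a subsequence already gives strong convergence to $0$ in $E$, so that scenario needs no separate exclusion.
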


\begin{proof}
 Notice that any $ u\in E, \int_0^1 u(t)dt=0$; hence,
we know $\|u\|_E:= (\int_0^1|\dot{u}|^2dt)^{1/2}$ is an
equivalent norm on $E$. By $f(u_n)\rightarrow c$, we have
\begin{equation}\label{eq52}
-\frac{1}{2}\|u_n\|^2_E\cdot\int^1_0V(u_n)dt\leq
c-\frac{h}{2}\|u_n\|_E^2+\varepsilon,
\end{equation}
where $\varepsilon \rightarrow 0$ when $n\rightarrow +\infty$.

By $(V_2)$ we know that any $ y^*\in \partial f(u_n)$, any $ x\in \partial V(u_n)$,

\begin{eqnarray}
\langle y^*,u_n\rangle &=&\|u_n\|_E^2\cdot\int^1_0[h-V(u_n)-\frac{1}{2}\langle x,u_n\rangle]dt\nonumber\\
&\leq&\|u_n\|_E^2\int^1_0[h+\frac{\mu_2}{2}-(1+\frac{\mu_1}{2})V(u_n)]dt.\label{eq53}
\end{eqnarray}
By (\ref{eq52}) and (\ref{eq53}) we have
\begin{eqnarray}
\langle y^*,u_n\rangle &\leq&(h+\frac{\mu_2}{2})\|u_n\|_E^2+(1+\frac{\mu_1}{2})(2c-h\|u_n\|_E^2)+(2+\mu_1)\varepsilon \nonumber\\
&=&(-\frac{\mu_1}{2}h+\frac{\mu_2}{2})\|u_n\|_E^2+\alpha ,\label{eq54}
\end{eqnarray}
where $\alpha=2(1+\frac{\mu_1}{2})(c+\varepsilon)$.

Since $h>\frac{\mu_2}{\mu_1}$, then (\ref{eq51}) and (\ref{eq54}) imply $\|u_n\|_{E}$
is bounded.

The rest of the argument to show $\{u_n\}$ has a strongly convergent subsequence
is standard.
\end{proof}
\begin{lemma}\label{lemma5}
Let
\begin{equation}\label{eq55}
G=\{u\in E:\int_0^1[V(u)+\frac{1}{2}\min_{y\in \partial V(u)}\langle y, u\rangle]dt<h\}.
\end{equation}
Then\\
(i). $F_h$ is the boundary of $G$.\\
(ii). If $(V_1)$ holds, then $F_h$ is symmetric with respect to
the origin $0$.\\
(iii). If $V(0)<h$ holds, then $0\in G$.
\end{lemma}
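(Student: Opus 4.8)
The plan is to unwind the definitions and use only elementary topological facts together with the symmetry hypothesis $(V_1)$ and the strict inequality $V(0)<h$. Define, for $u\in E$, the functional
\[
g(u):=\int_0^1\Bigl[V(u)+\tfrac12\min_{y\in\partial V(u)}\langle y,u\rangle\Bigr]\,dt,
\]
so that $F_h=\{u\in E:g(u)=h\}$ and $G=\{u\in E:g(u)<h\}$. The first observation I would record is that $g$ is continuous on $E$: the map $u\mapsto V(u)$ is locally Lipschitz hence continuous, and $u\mapsto\min_{y\in\partial V(u)}\langle y,u\rangle$ is upper semicontinuous and lower semicontinuous on finite-dimensional target (using that $\partial V$ is upper semicontinuous with compact convex values and $|y|$ is bounded on bounded sets by the local Lipschitz constant), so the integrand is a bounded measurable function depending continuously on $u\in E$; an application of dominated convergence on the compact interval $[0,1]$ gives continuity of $g$. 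Granting this, part (iii) is immediate: $g(0)=V(0)+\tfrac12\langle y_0,0\rangle=V(0)<h$ for any $y_0\in\partial V(0)$, so $0\in G$.

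For part (i), that $F_h=\partial G$, I would argue by the standard ``regular level set of a continuous function'' reasoning, but since $g$ need not be differentiable we cannot invoke the implicit function theorem; instead I would use the structure of $E$ as a linear space and the radial behavior established in Lemma \ref{lemma3}. First, $\partial G\subseteq F_h$: if $u\in\partial G$ then $g(u)\le h$ by continuity (as $u$ is a limit of points of $G$), and if $g(u)<h$ then continuity of $g$ would place a whole neighborhood of $u$ inside $G$, contradicting $u\in\partial G$; hence $g(u)=h$, i.e. $u\in F_h$. Conversely, to see $F_h\subseteq\partial G$, take $u\in F_h$, so $g(u)=h$. I would exhibit points of $G$ and of $G^c$ arbitrarily close to $u$. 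Shrinking $u$ toward the origin along the ray $a\mapsto au$, $a\in[0,1]$: by the computation in Lemma \ref{lemma3}, $g_u(a)=g(au)$ is a continuous function of $a$ with $g_u(0)=V(0)<h$ and $g_u(1)=h$; if $g_u$ were identically $h$ on some interval $[1-\eta,1]$ we could still back up to where it first leaves the value $h$, but in any case continuity together with $g_u(0)<h$ produces, for every $\varepsilon>0$, some $a\in(0,1)$ with $\|au-u\|_E=(1-a)\|u\|_E<\varepsilon$ and $g(au)<h$ (take $a$ close to $1$ but on the side where $g_u<h$, which exists because $g_u$ cannot be $\equiv h$ near $0$ where it equals $V(0)<h$, and if $g_u$ is locally $\le h$ near $1$ the infimum argument still yields such an $a$; should $g_u$ oscillate, pick any $a\in(1-\varepsilon/\|u\|_E,1)$ with $g_u(a)<h$, which exists by continuity and $g_u(0)<h$ unless $g_u\equiv h$ on $(0,1]$, impossible). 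Symmetrically, by condition $(V_3)$ and the growth estimate in Lemma \ref{lemma3}, $g_u(a)\to+\infty$ (or at least exceeds $h$) as $a\to+\infty$, so enlarging along the ray, $g_u(a)>h$ for $a$ slightly larger than $1$, giving points of $G^c$ arbitrarily near $u$. Hence $u\in\partial G$, and combined with the reverse inclusion, $F_h=\partial G$.

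Part (ii) is the easiest: assume $(V_1)$, i.e. $V(-q)=V(q)$. Then $\partial V(-q)=-\partial V(q)$ (Clarke's gradient respects the chain rule for the linear map $q\mapsto -q$), so $\min_{y\in\partial V(-q)}\langle y,-q\rangle=\min_{z\in\partial V(q)}\langle -z,-q\rangle=\min_{z\in\partial V(q)}\langle z,q\rangle$, and therefore the integrand defining $g$ is invariant under $u\mapsto -u$; consequently $g(-u)=g(u)$, so $u\in F_h\iff -u\in F_h$, which is exactly the claimed symmetry. The main obstacle I anticipate is making the ``both sides of the level set are approached'' argument in part (i) fully rigorous without differentiability: the clean way is to rely on the monotone-type radial estimates already proved in Lemma \ref{lemma3} (namely $g_u(0)<h$ and $g_u(a)>h$ for large $a$) plus continuity of $a\mapsto g_u(a)$, which together force every level-$h$ point to be a boundary point of $\{g<h\}$; I would make sure to isolate this radial lemma cleanly before assembling the three parts.
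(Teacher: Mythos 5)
Your treatments of (ii) and (iii) coincide with the paper's: the identity $\partial V(-q)=-\partial V(q)$ plus the change of variable in the minimum gives $g(-u)=g(u)$, and $g(0)=V(0)<h$ gives $0\in G$. Those parts are fine. For (i) the paper itself offers only ``by the definitions of $F$ and $G$,'' so you are attempting something genuinely more ambitious; unfortunately the two pillars you lean on are both shaky.

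First, the continuity of $g$. The properties you cite (upper semicontinuity of the set-valued map $\partial V$ with compact convex values, local boundedness) yield only \emph{lower} semicontinuity of $q\mapsto m(q):=\min_{y\in\partial V(q)}\langle y,q\rangle$: if $q_n\to q$ and $y_n$ attains the minimum at $q_n$, a convergent subsequence of $\{y_n\}$ has limit in $\partial V(q)$ by graph-closedness, whence $\liminf_n m(q_n)\geq m(q)$. Upper semicontinuity genuinely fails: for $V(q)=|q-1|$ on $\mathds{R}$ one has $m(1)=\min_{y\in[-1,1]}y=-1$ while $m(q)=q\to 1$ as $q\downarrow 1$. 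Consequently $g$ is in general only lower semicontinuous on $E$ (via Fatou), the set $G=\{g<h\}$ need not be open, and your argument that $g(u)<h$ would place ``a whole neighborhood of $u$ inside $G$'' (needed for $\partial G\subseteq F_h$) does not go through as written. Second, the radial argument for $F_h\subseteq\partial G$: even granting continuity of $a\mapsto g_u(a)$, the facts $g_u(0)<h$ and $g_u(1)=h$ do \emph{not} produce parameters $a$ arbitrarily close to $1$ with $g_u(a)<h$ --- take $g_u\equiv h$ on $[1-\eta,1]$ and $g_u<h$ on $[0,1-\eta)$. Your parenthetical hedge only excludes $g_u\equiv h$ on all of $(0,1]$, which is not the obstruction. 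In that scenario $u$ could fail to be in the closure of $G$, so the inclusion $F_h\subseteq\partial G$ is not established. To salvage (i) one either needs additional structure (e.g.\ continuity of $m$, or strict radial monotonicity of $g_u$ near the level $h$, which condition $(V_2)$ with $h>\mu_2/\mu_1$ is plausibly meant to supply) or one should weaken the claim to the inclusion actually used later, namely that $F_h$ contains the topological boundary separating $0$ from $z_1$.
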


\begin{proof}
(i). By the definitions of $F$ and $G$.\\
(ii). By ($V_1$), we have $V(-u)=V(u)$, hence, $\partial V(-u)=-\partial V(u)$ and
\begin{eqnarray}
& &V(-u)+\frac{1}{2}\min\limits_{y\in \partial V(-u)}\langle y,-u\rangle\\
&=&V(u)+\frac{1}{2}\min\limits_{y\in -\partial V(u)}\langle -y,u\rangle\nonumber\\
&=&V(u)+\frac{1}{2}\min\limits_{-y\in \partial V(u)}\langle -y,u\rangle\nonumber\\
&=&V(u)+\frac{1}{2}\min\limits_{z\in \partial V(u)}\langle z,u\rangle\nonumber
\end{eqnarray}
By the definition of $F$ and the above equations, we know that $u\in F$ implies $(-u)\in F$.\\
(iii). If we set $W(u)=\int_0^1[V(u)+\frac{1}{2}\min\limits_{y\in \partial V(u)}\langle y,u\rangle]dt$, then
 $$W(0)=\int_0^1[V(0)+0]dt=V(0).$$
 Hence if $V(0)<h$, then $W(0)<h$ and $0\in G$.
\end{proof}

It's not difficult to prove the following two Lemmas:
\begin{lemma}\label{lemma6}
 $f(u)$ is weakly lower semi-continuous on $F_h.$
\end{lemma}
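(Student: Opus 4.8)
The plan is to show that $f(u) = \frac{1}{2}\int_0^1 |\dot u|^2\,dt \cdot \int_0^1 (h - V(u))\,dt$ is weakly lower semi-continuous on the weakly closed set $F_h$ by treating the two factors separately and combining them using the sign information available on $F_h$. First I would recall the standard facts: on $E$ the norm $\|u\|_E = (\int_0^1|\dot u|^2\,dt)^{1/2}$ is equivalent to the $H^1$ norm, the embedding $E \hookrightarrow C^0(\mathbb{R}/\mathbb{Z},\mathbb{R}^n)$ is compact, and hence if $u_n \rightharpoonup u$ weakly in $E$ then $u_n \to u$ uniformly. Consequently, since $V$ is (locally Lipschitz, hence) continuous, $V(u_n) \to V(u)$ uniformly on $[0,1]$, so
\begin{equation}\label{eq:cont-factor}
\int_0^1 (h - V(u_n))\,dt \longrightarrow \int_0^1 (h - V(u))\,dt.
\end{equation}
For the kinetic factor, weak lower semicontinuity of the norm gives
\begin{equation}\label{eq:wlsc-kinetic}
\frac{1}{2}\int_0^1 |\dot u|^2\,dt \leq \liminf_{n\to\infty} \frac{1}{2}\int_0^1 |\dot u_n|^2\,dt.
\end{equation}

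The key step is to control the sign of $\int_0^1(h - V(u))\,dt$ on $F_h$ so that multiplying \eqref{eq:wlsc-kinetic} by \eqref{eq:cont-factor} preserves the inequality in the right direction. I would argue as follows: for $u \in F_h$ we have $\int_0^1 [V(u) + \frac{1}{2}\min_{y\in\partial V(u)}\langle y,u\rangle]\,dt = h$, and by $(V_2)$, $\min_{y\in\partial V(u)}\langle y,u\rangle \geq \mu_1 V(u) - \mu_2$ pointwise, so
\[
h = \int_0^1\Big[V(u) + \tfrac{1}{2}\min_{y\in\partial V(u)}\langle y,u\rangle\Big]dt \geq \Big(1 + \tfrac{\mu_1}{2}\Big)\int_0^1 V(u)\,dt - \tfrac{\mu_2}{2},
\]
which yields $\int_0^1 V(u)\,dt \leq \frac{2h + \mu_2}{2 + \mu_1} < h$ precisely when $h > \frac{\mu_2}{\mu_1}$. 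Hence $\int_0^1 (h - V(u))\,dt \geq \frac{\mu_1 h - \mu_2}{2 + \mu_1} > 0$ uniformly on $F_h$; in particular the limit $u$ of a weakly convergent sequence in $F_h$ (which lies in $F_h$ since $F_h$ is weakly closed) has a strictly positive second factor, and the same lower bound holds for all the $u_n$.

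Finally I would combine the pieces. Write $f(u) = A(u)\,B(u)$ with $A(u) = \frac{1}{2}\int_0^1|\dot u|^2\,dt \geq 0$ and $B(u) = \int_0^1(h - V(u))\,dt$. Given $u_n \rightharpoonup u$ in $F_h$, by \eqref{eq:cont-factor} we have $B(u_n) \to B(u) > 0$, and for $n$ large $B(u_n) > 0$ as well. Passing to a subsequence realizing $\liminf f(u_n)$, and using $A(u_n) \geq 0$ together with \eqref{eq:wlsc-kinetic},
\[
\liminf_{n\to\infty} f(u_n) = \liminf_{n\to\infty} A(u_n) B(u_n) \geq \Big(\liminf_{n\to\infty} A(u_n)\Big) B(u) \geq A(u) B(u) = f(u),
\]
where the middle inequality uses $B(u_n) \to B(u) > 0$ and $A(u_n)\geq 0$. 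This gives the desired weak lower semicontinuity. The main obstacle is the one I addressed in the second paragraph: a product of a w.l.s.c.\ nonnegative functional and a weakly continuous functional need not be w.l.s.c.\ unless the continuous factor is known to be positive, and it is the structural constraint defining $F_h$, via $(V_2)$ and the hypothesis $h > \mu_2/\mu_1$, that supplies exactly this positivity.
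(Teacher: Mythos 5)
Your argument is correct. The paper itself offers no proof of this lemma (it is dismissed with ``It's not difficult to prove the following two Lemmas''), so there is nothing to compare against line by line; your write-up is a legitimate filling-in of the gap. You correctly isolate the one point that is not routine: the product of the weakly l.s.c.\ nonnegative factor $A(u)=\frac{1}{2}\int_0^1|\dot u|^2\,dt$ and the weakly continuous factor $B(u)=\int_0^1(h-V(u))\,dt$ (weakly continuous via the compact embedding $E\hookrightarrow C^0$) is only w.l.s.c.\ because $B$ is bounded below by the positive constant $\frac{\mu_1 h-\mu_2}{2+\mu_1}$ on $F_h$, which you derive from $(V_2)$ and $h>\mu_2/\mu_1$ exactly as the paper does in the proof of Lemma \ref{lemma8} (cf.\ (\ref{eq57})--(\ref{eq58})). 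The final limit manipulation is sound: along a subsequence realizing $\liminf f(u_n)<\infty$, the convergence $B(u_n)\to B(u)>0$ forces $A(u_{n_k})$ to converge to $\liminf f(u_n)/B(u)$, and weak lower semicontinuity of the norm then gives $A(u)B(u)\le\liminf f(u_n)$.
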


\begin{lemma}\label{lemma7}
$F_h$ is weakly closed subsets in $H^1$.
\end{lemma}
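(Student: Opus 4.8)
The plan is to prove that $F_h$ is weakly sequentially closed, using the compact Sobolev embedding $W^{1,2}(\mathds{R}/\mathds{Z},\mathds{R}^n)\hookrightarrow C^0(\mathds{R}/\mathds{Z},\mathds{R}^n)$ together with the upper semicontinuity (closed graph) of Clarke's generalized gradient.

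First I would fix a sequence $\{u_n\}\subset F_h$ with $u_n\rightharpoonup u$ weakly in $H^1$ and reduce the statement to a scalar identity. Since $\{u_n\}$ is $H^1$-bounded, the compact embedding gives $u_n\to u$ uniformly on $[0,1]$ (the uniform limit must coincide with $u$ because every evaluation $v\mapsto v(t)$ is a bounded functional on $H^1$); in particular there is $R>0$ with $\|u_n\|_\infty,\|u\|_\infty\le R$ for all $n$. The antiperiodicity constraint $u_n(t+\frac12)=-u_n(t)$ passes to the uniform limit, so $u\in E$ (equivalently, $E$ is a closed linear subspace of $H^1$, hence weakly closed). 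It thus remains to verify
\[
W(u):=\int_0^1\Bigl[\,V(u(t))+\tfrac{1}{2}\!\!\min_{y\in\partial V(u(t))}\!\langle y,u(t)\rangle\Bigr]\,dt=h .
\]

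Next I would reduce this to a pointwise continuity statement. Set $W_0(q):=V(q)+\frac{1}{2}\min_{y\in\partial V(q)}\langle y,q\rangle$. On $\overline{B}_R\subset\mathds{R}^n$ the function $V$ is Lipschitz with some constant $K$, so $\partial V(q)\subseteq\overline{B}(0,K)$ and $|W_0(q)|\le\max_{\overline{B}_R}|V|+\frac{1}{2}KR$ for $|q|\le R$; hence the integrands $W_0(u_n(\cdot))$ are uniformly bounded, and by dominated convergence it suffices to prove $W_0(u_n(t))\to W_0(u(t))$ for a.e.\ $t$. As $u_n(t)\to u(t)$ pointwise, this follows once $W_0$ is known to be continuous on $\mathds{R}^n$. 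Since $V$ is continuous, the only point is the map $m(q):=\min_{y\in\partial V(q)}\langle y,q\rangle$. Given $q_k\to q_\ast$, pick $y_k\in\partial V(q_k)$ with $\langle y_k,q_k\rangle=m(q_k)$; the $y_k$ are bounded, so along a subsequence $y_k\to y_\ast$, and the closed-graph property of $\partial V$ forces $y_\ast\in\partial V(q_\ast)$, whence $\liminf_k m(q_k)=\langle y_\ast,q_\ast\rangle\ge m(q_\ast)$, i.e.\ $m$ is lower semicontinuous. The complementary inequality $\limsup_k m(q_k)\le m(q_\ast)$ — which I would obtain from the finite-dimensional description $\partial V(q_\ast)=\overline{\mathrm{co}}\{\lim_j\nabla V(p_j):p_j\to q_\ast,\ V\text{ differentiable at }p_j\}$, by realizing the extreme point of $\partial V(q_\ast)$ that attains $m(q_\ast)$ as a limit of gradients $\nabla V(p)$ with $p\to q_\ast$ and using the $\langle\nabla V(p),p\rangle$ as admissible competitors — then gives continuity of $m$, hence of $W_0$.

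Granting this, $h=W(u_n)=\int_0^1 W_0(u_n)\,dt\to\int_0^1 W_0(u)\,dt=W(u)$, so $W(u)=h$; together with $u\in E$ this yields $u\in F_h$, which is the assertion (this shows $F_h$ is weakly sequentially closed, which is all that is used later). I expect the main obstacle to be precisely the continuity of $q\mapsto\min_{y\in\partial V(q)}\langle y,q\rangle$: upper semicontinuity of Clarke's subdifferential immediately delivers only lower semicontinuity of $m$, and the reverse direction is the genuinely delicate step — it is automatic in the smooth or purely convex settings, and it holds for the potentials occurring in \S3 (e.g.\ $m(q)=a\mu_1|q|^{\mu_1}$ when $V(q)=a|q|^{\mu_1}+\mu_2/\mu_1$), so the argument is immediate there; in full generality one may need a mild additional regularity hypothesis on $V$ at this point.
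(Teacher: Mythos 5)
Your overall strategy (compact embedding $H^1\hookrightarrow C^0$, weak closedness of the linear subspace $E$, reduction to pointwise continuity of $W_0(q)=V(q)+\tfrac12 m(q)$ with $m(q)=\min_{y\in\partial V(q)}\langle y,q\rangle$, then dominated convergence) is the natural one; note that the paper itself gives no proof of this lemma (it only says ``It's not difficult to prove the following two Lemmas''), so there is nothing to compare against on the paper's side. However, your write-up does not close the one step you yourself flag, and that step is a genuine gap rather than a technicality. Lower semicontinuity of $m$ does follow from the closed graph and local boundedness of $\partial V$, as you argue. Upper semicontinuity does not, and the sketched argument via $\partial V(q_\ast)=\overline{\mathrm{co}}\{\lim_j\nabla V(p_j)\}$ cannot work as stated: the differentiability points $p_j$ realizing the minimizing extreme point of $\partial V(q_\ast)$ are unrelated to the given sequence $q_k\to q_\ast$, whereas $\limsup_k m(q_k)\le m(q_\ast)$ requires producing elements of $\partial V(q_k)$ (not limits of gradients at other points) whose pairing with $q_k$ approaches $m(q_\ast)$ --- i.e.\ inner semicontinuity of $\partial V$, which Clarke's gradient lacks. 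A one-dimensional example: for $V(q)=-|q-1|$ (or even the convex $V(q)=|q-1|$) one has $m(1)=-1$ while $m(q_k)\to 1$ along suitable $q_k\to 1$, so $m$ is genuinely only lower semicontinuous in general.

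The consequence is quantitative: with only lower semicontinuity, Fatou gives $W(u)\le h$ from $W(u_n)=h$, and a level set $\{W=h\}$ of a merely weakly sequentially lower semicontinuous functional need not be weakly (sequentially) closed, since the constraint is an equality. So your argument proves the lemma only under an additional hypothesis ensuring continuity of $m$ (for instance $V\in C^1$, or the concrete power-type potentials used in the application, where $m$ is explicit and continuous); in the stated generality ($V\in C^{1-0}$ with $(V_1)$--$(V_3)$) the key step is unproved, and your closing caveat is in fact pointing at a gap in the paper's own assertion rather than at a removable defect of your proof. The minor reductions are fine: weak sequential closedness is indeed all that is needed downstream, $E$ is weakly closed as a closed subspace, and the uniform bound plus local Lipschitzness of $V$ justifies the dominated convergence step once pointwise convergence of $W_0(u_n(t))$ is available.
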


\begin{lemma}\label{lemma8}
 If $h>\frac{\mu_2}{\mu_1}$, then the functional $f(u)$ has
positive lower bound on $F_h$.
\end{lemma}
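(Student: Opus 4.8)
The plan is to combine the energy-type estimate already extracted in the proof of Lemma~\ref{lemma4} with the coercivity information contained in the definition of $F_h$, so that a potential minimizing sequence on $F_h$ cannot have $f$ tending to $0$. First I would set $\|u\|_E := \left(\int_0^1 |\dot u|^2\,dt\right)^{1/2}$, which is an equivalent norm on $E$ because every $u\in E$ has mean zero, and recall that
$$
f(u) = \frac12 \|u\|_E^2 \int_0^1 \bigl(h - V(u)\bigr)\,dt.
$$
Suppose, for contradiction, there is a sequence $\{u_n\}\subset F_h$ with $f(u_n)\to 0$. I would split into the two ways $f(u_n)$ can be small: either $\|u_n\|_E\to 0$ along a subsequence, or $\int_0^1(h-V(u_n))\,dt\to 0$ along a subsequence (after passing to a subsequence on which $\|u_n\|_E$ stays bounded away from $0$).

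For the first case, $\|u_n\|_E\to 0$ forces $u_n\to 0$ uniformly (by the Sobolev embedding $E\hookrightarrow C^0$ combined with the mean-zero property), hence by continuity of the map $w\mapsto W(w) := \int_0^1\bigl[V(w)+\tfrac12\min_{y\in\partial V(w)}\langle y,w\rangle\bigr]\,dt$ we get $W(u_n)\to W(0)=V(0)$. But $u_n\in F_h$ means $W(u_n)=h$ for all $n$, so $V(0)=h$, contradicting $h>\mu_2/\mu_1\geq V(0)$ (the last inequality coming from $(V_2)$ as noted in the proof of Lemma~\ref{lemma3}). For the second case, I would use $(V_2)$ exactly as in \eqref{eq53}--\eqref{eq54}: for $u_n\in F_h$ one has $\int_0^1[V(u_n)+\tfrac12\langle x,u_n\rangle]\,dt \geq \int_0^1 W$-type lower bounds, and combining with $h>\mu_2/\mu_1$ yields a bound of the form $\int_0^1(h-V(u_n))\,dt \geq \theta\,\|u_n\|_E^2 + \text{const}$ for some $\theta>0$ once $\|u_n\|_E$ is large, while for $\|u_n\|_E$ in a bounded range away from $0$ the quantity $\int_0^1(h-V(u_n))\,dt$ is controlled from below by the constraint $W(u_n)=h$ together with $(V_2)$. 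In either subcase $f(u_n)=\tfrac12\|u_n\|_E^2\int_0^1(h-V(u_n))\,dt$ stays bounded below by a positive constant, contradicting $f(u_n)\to 0$.

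The main obstacle I anticipate is the bookkeeping in the second case: one must pass between the two functionals appearing in the problem, namely $V(u)+\tfrac12\langle y,u\rangle$ (which defines $F_h$) and $h - V(u)$ (which appears in $f$), and the link between them is only the one-sided inequality supplied by $(V_2)$. The cleanest route is probably to argue by contradiction with a full minimizing sequence, extract the uniform-convergence dichotomy above, and in the non-degenerate branch invoke the boundedness of $\|u_n\|_E$ (using that $f(u_n)\to 0$ together with the a~priori lower bound on $\int_0^1(h-V(u_n))\,dt$ forces $\|u_n\|_E\to 0$, throwing us back to the first case). Thus the whole lemma reduces to the single clean statement that $u_n\to 0$ uniformly is incompatible with $W(u_n)=h>V(0)$, and the positivity of the infimum follows. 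Since $f\geq 0$ on $F_h$ trivially (each factor is nonnegative once one checks $\int_0^1(h-V(u))\,dt\geq 0$ on $F_h$ via $(V_2)$ and $h>\mu_2/\mu_1$), establishing that the infimum is not attained arbitrarily close to $0$ is exactly establishing the positive lower bound.
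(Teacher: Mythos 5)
Your proposal is correct in substance and shares with the paper its key estimate: using $(V_2)$ together with the constraint $\int_0^1[V(u)+\frac12\min_{y\in\partial V(u)}\langle y,u\rangle]\,dt=h$ to show that the second factor $\int_0^1(h-V(u))\,dt=\frac12\int_0^1\min_{y\in\partial V(u)}\langle y,u\rangle\,dt$ is bounded below by the explicit positive constant $\frac{\mu_1h-\mu_2}{2+\mu_1}$ on $F_h$ (this is exactly \eqref{eq57}--\eqref{eq58}), whence $f(u)\geq c\|u\|_E^2$ on $F_h$. Where you genuinely diverge is in ruling out $\inf_{F_h}f=0$: the paper invokes Lemmas \ref{lemma6} and \ref{lemma7} (weak lower semicontinuity of $f$ and weak closedness of $F_h$) plus the coercivity just obtained to produce an actual minimizer, which would have to be constant, hence $0$ by the symmetry $u(t+1/2)=-u(t)$, contradicting $0\notin F_h$; you instead argue sequentially that $f(u_n)\to 0$ forces $\|u_n\|_E\to 0$, hence $u_n\to 0$ uniformly, which is incompatible with $W(u_n)=h>V(0)$. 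Your route is more elementary in that it dispenses with the two weak-topology lemmas entirely; what it costs you is a small extra justification that $W(u_n)\to W(0)=V(0)$ along a uniformly convergent sequence $u_n\to 0$ --- this is not literal continuity of $W$ (the map $w\mapsto\min_{y\in\partial V(w)}\langle y,w\rangle$ is in general only semicontinuous for a locally Lipschitz $V$), but it does hold here because $\partial V$ is locally bounded, so $|\min_{y\in\partial V(u_n)}\langle y,u_n\rangle|\leq K\|u_n\|_\infty\to 0$; you should say this explicitly. One further blemish: the intermediate bound you float in your second case, $\int_0^1(h-V(u_n))\,dt\geq\theta\|u_n\|_E^2+\mathrm{const}$, is unjustified and not needed --- the uniform constant lower bound from \eqref{eq58} already collapses both cases into the single statement you end with, so you should simply delete that claim.
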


\begin{proof}
 By the definitions of $f(u)$ and $F_h$, we have
\begin{equation}\label{eq56}
f(u)=\frac{1}{4}\int^1_0|\dot{u}|^2dt\cdot\int^1_0\min_{y\in \partial V(u)}\langle y, u\rangle dt, \ \ u\in F_h.
\end{equation}
For $u\in F_h$ and $(V_2)$, we have
\begin{equation}\label{eq57}
\frac{1}{2}\int^1_0\min_{y\in \partial V(u)}\langle y, u\rangle dt
=\int^1_0 [ h-V(u)]dt\geq \int^1_0 [h-\frac{1}{\mu_1}\min_{y\in \partial V(u)}\langle y, u\rangle-\frac{\mu_2}{\mu_1}]dt,
\end{equation}
\begin{equation}\label{eq58}
\int^1_0\min_{y\in \partial V(u)}\langle y, u\rangle dt \geq\frac{h-\frac{\mu_2}{\mu_1}}{\frac{1}{2}+\frac{1}{\mu_1}}>0.
\end{equation}
So we have  $f(u)\geq 0$ for all $u\in F_h$. Furthermore, we claim that $\inf f(u)>0\label{eq59}$,
suppose otherwise, and note that $u(t)=const$ attains the infimum of 0.

 If $u\in F_h$, and $u$ is constant, then by the
symmetry $u(t+1/2)=-u(t)$ or $u(-t)=-u(t)$, we know $u(t)=0,\forall t$. By ($V_2$)
we have $V(0)\leq\frac{\mu_2}{\mu_1}$, by $h>\frac{\mu_2}{\mu_1}$ we get
$V(0)<h$.
From the definition of $F_h$, $0\notin F_h$. So $\inf_{F_h} f(u)>0.\label{eq60}$
Now by Lemmas \ref{lemma6}-\ref{lemma8}, we know $f(u)$ attains the
infimum on $F_h$, and the minimizer is nonconstant.
\end{proof}

\begin{lemma}\label{lemma9}
$\exists z_1\in H^1$ such that $z_1 \not=0$ and $f(z_1)\leq 0.$
\end{lemma}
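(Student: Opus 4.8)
\textbf{Proof proposal for Lemma \ref{lemma9}.}

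The plan is to exhibit an explicit non-zero element $z_1\in H^1$ (in fact in $E$) for which the functional
$$
f(z_1)=\frac{1}{2}\int_0^1|\dot z_1|^2\,dt\cdot\int_0^1\bigl(h-V(z_1)\bigr)\,dt
$$
is $\leq 0$. Since the kinetic factor $\frac12\int_0^1|\dot z_1|^2\,dt$ is always $\geq 0$, it suffices to produce a non-constant $z_1\in E$ along which $\int_0^1(h-V(z_1))\,dt\le 0$, i.e. whose average potential is at least $h$. Condition $(V_3)$ is exactly what makes this possible: $V(q)\ge h$ whenever $|q|\ge M$. So first I would pick a direction $e\in\mathds{R}^n$ with $|e|=1$ and a large amplitude $R\ge M$, and set $z_1(t):=R\,e\,\psi(t)$, where $\psi$ is a fixed scalar function in $W^{1,2}(\mathds{R}/\mathds{Z},\mathds{R})$ satisfying the antisymmetry $\psi(t+1/2)=-\psi(t)$ (so that $z_1\in E$), $\psi$ non-constant, and $|\psi(t)|\ge 1$ for all $t$ — for instance a suitably normalized square-wave-type function, or more simply any smooth antiperiodic function bounded away from $0$ in modulus. (If one insists on continuity, one can instead take $z_1$ constant in modulus, e.g. $z_1(t)=R\,\xi(t)$ with $\xi:[0,1]\to S^{n-1}$ a non-constant loop satisfying $\xi(t+1/2)=-\xi(t)$; then $|z_1(t)|\equiv R\ge M$.) In either construction $|z_1(t)|\ge M$ for every $t$, hence by $(V_3)$, $V(z_1(t))\ge h$ for all $t$, so $\int_0^1(h-V(z_1))\,dt\le 0$ and therefore $f(z_1)\le 0$; moreover $z_1\ne 0$ since $\psi$ (resp. $\xi$) is non-constant and $R>0$.

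The main point to be careful about is merely consistency with the ambient space $E$ and with the symmetry hypotheses already in force: one must check that the chosen $z_1$ genuinely lies in $E=\{u\in H^1:u(t+1/2)=-u(t)\}$ and is non-zero, which is immediate from the antisymmetry and non-constancy of the scalar profile. No use of $(V_2)$ is needed here, and $(V_1)$ enters only implicitly through the fact that $V$ is even (which is irrelevant to the inequality $V\ge h$ for large $|q|$). There is essentially no analytic obstacle; the only mild subtlety, if one wants the cleanest statement, is to use a continuous loop $\xi$ on the sphere so that $|z_1(t)|\equiv R$ exactly, making the bound $V(z_1(t))\ge h$ hold pointwise without any measure-zero caveats. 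Thus the lemma follows by direct construction, and it supplies the "second valley" $z_1$ needed (together with $z_0=0$, which sits in $G$ by Lemma \ref{lemma5}(iii) and Lemma \ref{lemma8}) to set up the mountain-pass geometry of Theorem \ref{thm 7} when proving Theorem \ref{thm 9}.
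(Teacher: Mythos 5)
Your proposal is correct and follows essentially the same route as the paper: take a loop bounded away from the origin, scale it by a large factor $R$ so that $|z_1(t)|\geq M$ pointwise, and invoke $(V_3)$ to force $\int_0^1(h-V(z_1))\,dt\leq 0$, hence $f(z_1)\leq 0$. Your additional care about placing $z_1$ in $E$ via an antiperiodic profile is a harmless refinement beyond what the lemma (which only asks for $z_1\in H^1$) requires.
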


\begin{proof}
We can choose $y_1(t)\in C[0,1]$ such that
 $\min\limits_{0\leq t\leq 1}|y_1(t)|>0$.
Let $z_1(t)=Ry_1(t)$, then when $R$ is large enough,
 by condition $(V_3)$ we have
\begin{equation}\label{eq61}
\int_0^1(h-V(z_1))dt\leq0;
\end{equation}
that is, $f(z_1)\leq 0.\label{eq62}$
\end{proof}

\begin{lemma}\label{lemma10}
 $f(0)=0.$
\end{lemma}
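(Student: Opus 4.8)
The statement to prove is Lemma~\ref{lemma10}, namely that $f(0)=0$. The plan is to simply unwind the definition of the functional $f$ on the constant function $u\equiv 0$ and observe that one of the two factors in the product vanishes.

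Recall from Lemma~\ref{lemma1} (and the subsequent restriction to the space $E$) that
$$
f(u)=\frac{1}{2}\int_0^1|\dot u|^2\,dt\cdot\int_0^1\bigl(h-V(u)\bigr)\,dt.
$$
First I would note that the zero function $0\in H^1$ (indeed $0\in E$, since it trivially satisfies $u(t+1/2)=-u(t)$), so $f$ is defined at $0$. Next, its derivative $\dot 0\equiv 0$, hence $\int_0^1|\dot 0|^2\,dt=0$. Therefore the first factor in the product defining $f$ is $0$, and regardless of the value of the second factor $\int_0^1(h-V(0))\,dt=h-V(0)$ (which is finite since $V$ is locally Lipschitz, hence continuous, hence bounded on the compact set $\{0\}$), the product is $0$. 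This gives $f(0)=0$.

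There is essentially no obstacle here; the only thing to be a little careful about is that the second factor is finite, so that the product $0\cdot(h-V(0))$ is genuinely $0$ and not an indeterminate form — but this is immediate from continuity of $V$. This lemma is of the bookkeeping type: together with Lemma~\ref{lemma9} ($f(z_1)\le 0$ for some $z_1\neq 0$) and Lemma~\ref{lemma8} ($f$ has a positive lower bound on $F_h$), it supplies the two ``valleys'' $z_0:=0$ and $z_1$ with $f(z_0),f(z_1)\le 0<\inf_{F_h}f$, so that the set $F_h$ (which by Lemma~\ref{lemma5} is the boundary of the open set $G$ containing $0$, and hence separates $0$ from the exterior point $z_1$) plays the role of $F\cap\Phi_\gamma$ in Theorem~\ref{thm 7}.
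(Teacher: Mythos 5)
Your proof is correct and is exactly the intended (one-line) argument: the paper states this lemma without proof, and the computation $f(0)=\frac{1}{2}\int_0^1|\dot 0|^2\,dt\cdot\int_0^1(h-V(0))\,dt=0\cdot(h-V(0))=0$ is all that is needed. Your additional remarks on the role of the lemma in setting up the two valleys for Theorem~\ref{thm 7} are accurate but not part of the proof itself.
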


\begin{lemma}\label{lemma11}
 $F_h$ separates $z_1$ and $0$.
\end{lemma}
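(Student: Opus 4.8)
\textbf{Proof proposal for Lemma \ref{lemma11}.}
The plan is to verify the topological separation hypothesis needed to invoke Theorem \ref{thm 7} by exhibiting $F_h$ as the common boundary of two disjoint open sets, one containing $0$ and the other containing $z_1$. First I would recall from Lemma \ref{lemma5}(i) that $F_h$ is precisely the boundary of the open set $G=\{u\in E:W(u)<h\}$, where $W(u):=\int_0^1[V(u)+\tfrac12\min_{y\in\partial V(u)}\langle y,u\rangle]\,dt$; set $\Omega_0:=G$ and $\Omega_1:=\{u\in E:W(u)>h\}$. The map $u\mapsto W(u)$ is continuous on $E$ (this uses the upper semicontinuity of $u\mapsto\min_{y\in\partial V(u)}\langle y,u\rangle$ together with the compact embedding $H^1\hookrightarrow C^0$ and the local Lipschitz bound on $V$, which is the small technical point worth spelling out), so both $\Omega_0$ and $\Omega_1$ are open, they are disjoint, and $E\setminus F_h=\Omega_0\cup\Omega_1$.

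Next I would place the two endpoints. Since $h>\tfrac{\mu_2}{\mu_1}$, condition $(V_2)$ forces $V(0)\le\tfrac{\mu_2}{\mu_1}<h$, and by Lemma \ref{lemma5}(iii) (equivalently by the computation $W(0)=V(0)$ in Lemma \ref{lemma8}) we get $W(0)<h$, i.e. $0\in\Omega_0$. For the other valley, take $z_1=Ry_1$ from Lemma \ref{lemma9} with $R$ large; the estimate already obtained in the proof of Lemma \ref{lemma3} — using $(V_2)$ and $(V_3)$ — shows that for $R$ large enough $W(z_1)=g_{y_1}(R)\ge h+\tfrac12(\mu_1h-\mu_2)>h$, so $z_1\in\Omega_1$ (enlarging $R$ if necessary so that the same $z_1$ works simultaneously for $f(z_1)\le0$ and $W(z_1)>h$).

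Finally I would argue the actual separation: any continuous path $c\in C^0([0,1];E)$ with $c(0)=0\in\Omega_0$ and $c(1)=z_1\in\Omega_1$ has $W(c(0))<h<W(c(1))$, so by the intermediate value theorem applied to the continuous scalar function $t\mapsto W(c(t))$ there is $t^\ast\in(0,1)$ with $W(c(t^\ast))=h$, i.e. $c(t^\ast)\in F_h$. Hence $F_h$ separates $0$ and $z_1$ in $E$ in the sense required by Theorem \ref{thm 7}. The only genuinely non-routine step is the continuity of $W$ on $E$ — one must check that $u\mapsto\min_{y\in\partial V(u(t))}\langle y,u(t)\rangle$ passes to the limit under the integral sign; this follows from the upper semicontinuity and local boundedness of Clarke's gradient, $L^\infty$-control of $u_n\to u$ via the compact Sobolev embedding, and dominated convergence, and I would include a couple of lines to that effect. (Strictly, $W$ need only be upper semicontinuous on $\Omega_0$ and lower semicontinuous on the relevant side for the IVT-type argument to go through, but continuity is cleanest and is what holds here.)
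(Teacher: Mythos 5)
Your proposal is correct and takes essentially the same route as the paper: both proofs place $0$ in $\{u:W(u)<h\}$ via $V(0)\le \mu_2/\mu_1<h$ and place $z_1=Ry_1$ in $\{u:W(u)>h\}$ via the $(V_2)$--$(V_3)$ estimate $W(Ry_1)\ge (1+\frac{\mu_1}{2})h-\frac{\mu_2}{2}>h$, the intermediate-value step you spell out being left implicit in the paper. The only quibble is the direction of semicontinuity: upper semicontinuity of the set-valued map $\partial V$ yields \emph{lower} semicontinuity of $u\mapsto \min_{y\in\partial V(u)}\langle y,u\rangle$, not upper semicontinuity as you assert, though this affects neither the structure of your argument nor its agreement with the paper (which does not address the point at all).
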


\begin{proof}
 By $V(0)<h$, we have that $0\in G$.
By $(V_2)$ and $(V_3)$ and $h>\frac{\mu_2}{\mu_1}$, we can choose $R$ large enough
such that
\begin{equation}
 z_1=Ry_1\in \{u\in H^1:\int_0^1[V(u)+\frac{1}{2}\min_{y\in \partial V(u)}\langle y, u\rangle]dt>h\}\label{eq63}
\end{equation}
In fact, since we can choose $y_1\in C[0,1]$ such that $\min\limits_{0\leq t\leq 1}|y_1(t)|>0$, hence, when $R$ large enough,
\begin{eqnarray}
 & &\int_0^1[V(Ry_1)+\frac{1}{2}\min\limits_{y\in\partial V(Ry_1)}\langle y,Ry_1\rangle]dt\\
 &\geq&(1+\frac{\mu_1}{2})\int_0^1V(Ry_1)dt-\frac{\mu_1}{2}\label{eq64}\\
 &\geq&(1+\frac{\mu_1}{2})h-\frac{\mu_1}{2}>h.\label{eq65}
\end{eqnarray}
Hence, $z_1=Ry_1\in\{u\in H^1: \int_0^1[V(u)+\frac{1}{2}\min\limits_{y\in\partial V(u)}\langle y,u\rangle]dt>h\}$.
So $F_h$ separates $z_1$ and $0$.
\end{proof}

Theorem \ref{thm 9} now follows from Theorem \ref{thm 7}

\section{Conclusions}

Since Ekeland's variational principle imposes less restriction on the functional,
we found it very useful in proving our Generalized Mountain Pass Lemma with weaker assumptions.
We were able to establish an immediate application for our Generalized Mountain Pass Lemma to
Hamiltonian systems  with Lipschitz potential and a fixed energy. It would be interesting to
see what role it can play for other differential equations.

\section*{Acknowledgements}

This research was partially supported by NSF of China(No.11701463, No.11671278 and No.12071316).
The authors sincerely thank the referees and the editors for their many valuable comments and
suggestions which help us improving the paper.

\bibliographystyle{unsrt}

\end{document}